\renewcommand{\phi}{\varphi}
\theoremstyle{plain}
\newtheorem{theorem}{Theorem}
\newtheorem{lemma}{Lemma}
\newtheorem{proposition}{Proposition}
\theoremstyle{definition}
\theoremstyle{remark}
\newtheorem{remark}{Remark}
\newcommand{\eqn}[2]{\begin{equation}\label{#1}#2\end{equation}}
\newcommand{\eqnst}[1]{\begin{equation*}#1\end{equation*}}
\newcommand{\eqnspl}[2]{\begin{equation}\begin{split}\label{#1}%
    #2\end{split}\end{equation}}
\newcommand{\eqnsplst}[1]{\begin{equation*}\begin{split}%
    #1\end{split}\end{equation*}}
\newcommand{\C}{\mathbb C}
\newcommand{\R}{\mathbb R}
\newcommand{\E}{\mathbf{E}}
\def\caR{\mathcal{R}}
\def\bx{\mathbf{x}}
\def\ff{\mathfrak{f}}
\def\fg{\mathfrak{g}}
\def\nf{\mathfrak{n}}
\def\Re{\mathfrak{Re}}
\def\eps{\varepsilon}
\DeclareMathOperator*{\argmin}{arg\,min}
\def\Unif{{\mathsf{Unif}}}
\def\sgn{{\mathrm{sgn}}}
\def\tc{\widetilde{c}}
\def\tchi{\widetilde{\chi}}
\def\tkappa{\widetilde{\kappa}}
\def\tnf{\widetilde{\nf}}
\def\os{\overline{s}}
\def\oS{\overline{S}}
\def\tY{\widetilde{Y}}
\def\prob{\mathbf{P}}
\def\Var{\mathsf{Var}}
\begin{document}

\title{{\bf Edgeworth expansion with error estimates for power law shot noise}}

\author{
Antal A.~J\'arai
\footnote{Department of Mathematical Sciences,
University of Bath,
Claverton Down, Bath, BA2 7AY, United Kingdom, 
Email: {\tt A.Jarai@bath.ac.uk}}%
}

\maketitle

\footnotesize
\begin{quote}
{\bf Abstract}: 
Consider a homogeneous Poisson process in $\R^d$, $d \ge 1$.
Let $R_1 < R_2 < \dots$ be the distances of the points from the origin, 
and let $S = R_1^{-\gamma} + R_2^{-\gamma} + \dots$, where $\gamma > d$
is a parameter. Let $\oS^{(r)} = \sum_k R_k^{-\gamma} \, \mathbf{1}_{R_k \ge r}$
be the contribution to $S$ outside radius $r$. 
For large enough $r$, and any $\os$ in the support of $\oS^{(r)}$, 
consider the change of measure that shifts the mean to $\os$.  
We derive rigorous error estimates for the Edgeworth expansion of the
transformed random variable.
Our error terms are uniform in $\os$, 
and we give explicitly the dependence of
the error on $r$ and the order $k$ of the expansion. 
As an application, we provide a scheme that approximates the conditional distribution
of $R_1$ given $S = s$ to any desired accuracy, with error bounds that are uniform in $s$. 
Along the way, we prove a stochastic comparison between 
$(R_1, R_2, \dots)$ given $S = s$ and unconditioned radii 
$(R'_1, R'_2, \dots)$.
\end{quote}
\normalsize

{\bf Key-words}: Poisson point process, positive stable law,
Edgeworth expansion, change of measure, power law shot noise, pathloss

{\bf MSC 2010:} 60F05

\vspace{12pt}


\section{Introduction}
\label{sec:intro}

Consider a homogeneous Poisson point process in $\R^d$.
Let $0 < R_1 < R_2 < \dots$ be the distances of the points from the origin
in increasing order, and let 
\eqn{e:signal}
{ S
  = \sum_{k=1}^\infty R_k^{-\gamma} \qquad\qquad
  \oS^{(r)} 
  = \sum_{k : R_k > r} R_k^{-\gamma}, \quad r \ge 0,}
where $\gamma > d$ is a parameter. 
It is well known, and easy to see 
that $S = \oS^{(0)}$ is a stable random variable of index $d/\gamma$ \cite[Section 1.4]{STbook}.

\subsection{Motivation}

The setup above has the following interpretation 
relevant to wireless communication \cite[Chapter 5]{Haenggi}, \cite{WPS09}. 
Assume that $d = 2$ and at each point of the 
Poisson process a radio transmitter is located that emits a signal at 
unit power. The signal experiences a path loss $r^{-\gamma}$ at distance 
$r$ from the source, and hence an observer located at the origin receives
a total signal $S$. Suppose that the observer measures $S$ and is interested in
the distance $R_1$ to the nearest transmitter (providing the strongest signal
for the observer). The density of $R_1$, given $S = s$, is  
\eqn{e:Bayes}
{ f_{R_1 \,|\, S} (r_1 \,|\, s)
  = \frac{1}{f_S(s)} f_{R_1}(r_1) \, f_{S \,|\, R_1} (s \,|\, r_1)
  = \frac{1}{f_S(s)} f_{R_1}(r_1) \, f_{\oS^{(r_1)}}(s - r_1^{-\gamma}). }
As no closed formula is known for the density of $\oS^{(r)}$, it is of interest
to find approximations. The main technical result of this paper gives rigorous error 
estimates for an approximation of this density when $r$ is large. 
This large $r$ result can be used, via a modification of \eqref{e:Bayes} described
further below, to approximate $f_{R_1 \,|\, S}$ to any desired accuracy.

\subsection{Approximating $\oS^{(r)}$}

Let us first describe the approximation to $\oS^{(r)}$ we consider.
We let $\mu(r) = \E [ \oS^{(r)} ]$, $\sigma^2(r) = \Var(\oS^{(r)})$, and 
\eqnst
{ Y^{(r)} 
  = \frac{\oS^{(r)} - \mu(r)}{\sigma(r)}. }
As $r \to \infty$, $Y^{(r)}$ tends to a standard Gaussian. An explicit way to see 
this is to write $\oS^{(r)}$ as a sum of independent contributions from shells:
\eqnst
{ \oS^{(r)}
  = \sum_{n=1}^\infty \left( \oS^{(r+n-1)} - \oS^{(r+n)} \right). }
Since the volume of the $n$-th shell is $\approx c (r+n)^{d-1}$, 
the mean of the $n$-th term is $\approx c (r+n)^{d-1-\gamma}$ and 
its variance is $\approx c' (r+n)^{d-1-2\gamma}$. Asymptotic normality of 
$\oS^{(r)}$ can be deduced for example from Lyapunov's criterion 
\cite[Section X.8, Exercise 3]{FI}. 
We note that the mean and variance of $\oS^{(r)}$ are of the form
$\mu(r) = c \, r^{d-\gamma}$ and $\sigma^2(r) = c' r^{d-2\gamma}$.
Since the value of $\lambda$ can be fixed by scaling, we assume throughout
that $\lambda$ has the fixed value that makes $c' = 1$.

Let $y$ be a fixed possible value of $Y^{(r)}$, and suppose we want to approximate 
the density $f_{Y^{(r)}}(y)$. 
We transform the variable $Y^{(r)}$ into a variable $\tY^{(r)}$, in such a way that the mean 
becomes $\E [ \tY^{(r)} ] = y$, and hence $y$ is `typical' for $\tY^{(r)}$.
This technique is standard in many branches of probability; 
see for example \cite[Section XVI.7]{FII}, 
where it is called the technique of `associated distributions'; or see 
\cite[Section I.3]{dH00}, where it is called the `Cram\'er transform'; other
names are: `change of measure' and `tilting'. In order to define the transformation,
denote the Laplace transform
$\phi_{Y^{(r)}}(t) := \E \left[ e^{-t Y^{(r)}} \right]$. 
The transformed probability density is
\eqn{e:tYr-def}
{ f_{\tY^{(r)}}(z)
  := \frac{e^{\xi z}}{\phi_{Y^{(r)}}(-\xi)} f_{Y^{(r)}}(z), \quad
     z \in \R, }
where the parameter $\xi$ is chosen to be
\eqn{e:Legendre-simple}
{ \xi 
  = \xi(y,r) 
  := \argmin_{\xi' \in \R} \, e^{-\xi' y} \phi_{Y^{(r)}}(-\xi'). } 

Let $\tkappa_2 = \tkappa_2(y,r)$ denote the variance of $\tY^{(r)}$.
It depends both on $r$ and $y$, through the definition of $\xi(y,r)$.
The simplest approximation is to replace $\tY^{(r)}$ 
by a Gaussian of mean $y$ and variance $\tkappa_2$, which gives
$f_{\tY^{(r)}}(y) \approx 1/\sqrt{2 \pi \tkappa_2(y,r)}$.
Inverting the transformation in \eqref{e:tYr-def}, this gives an
approximation of $f_{Y^{(r)}}(y)$.
We find that the relative error of this approximation is
uniform in $y$. That is, 
for all $r \ge 1$ and all $y$ in the support of the distribution of $Y^{(r)}$, 
we have
\eqnsplst
{ f_{Y^{(r)}}(y) 
  = \frac{1}{\sqrt{2 \pi \tkappa_2(y,r)}} e^{- \xi(y,r) \, y} \, \phi_{Y^{(r)}}(-\xi(y,r)) \,
    \left[ 1 + O \left( r^{-d/2} \right) \right], }
where the constant in the error term only depends on $d$. 

When $r$ is large, it is possible to improve the error, by replacing
the normal approximation by the so-called Edgeworth expansion; 
see \cite[Section XVI.2, Theorem 2]{FII}.
We briefly explain the idea of this expansion, in order to state our main 
theorem. The normal approximation 
is based on a Taylor expansion of the characteristic function 
$\chi_{\tY^{(r)}}$ of $\tY^{(r)}$ to second order, and hence involves 
the mean and variance of $\tY^{(r)}$.
When higher order moments also exists, the Taylor expansion can be 
continued with $k$ additional terms for some $k \ge 1$. Abreviating
$\rho = r^{d/2}$, this takes the form 
\eqnspl{e:Edgeworth}
{ \chi_{\tY^{(r)}}(t)
  &= \exp \left( ity - \frac{t^2}{2!} \tkappa_2 + \frac{(it)^3}{3!} \frac{\tkappa_3}{\rho}
    + \dots + \frac{(it)^{k+2}}{(k+2)!} \frac{\tkappa_{k+2}}{\rho^k} 
    + O \left(\frac{t^{k+3}}{\rho^{k+1}}\right) \right) \\
  &= \exp \left( ity - \frac{t^2}{2!} \tkappa_2 \right) \,
    \exp \left( \frac{(it)^3}{3!} \frac{\tkappa_3}{\rho}
    + \dots + \frac{(it)^{k+2}}{(k+2)!} \frac{\tkappa_{k+2}}{\rho^k} 
    + O \left(\frac{t^{k+3}}{\rho^{k+1}}\right) \right) \, }
with some coefficients $\tkappa_n$ that depend on $y$ and $r$. 
Expanding 
the second exponential in the right hand side of \eqref{e:Edgeworth},
according to the exponential power series, and keeping terms of order 
$\rho^{-k}$ and lower, yields the Edgeworth expansion. This is a 
multiplicative polynomial correction to the normal distribution, whose coefficients 
can be expressed in terms of the $\tkappa_n$'s. The asymptotic error of 
the correction is $O( \rho^{-k-1} )$, as $r \to \infty$.
Since we are interested in bounds for finite $r$, we
determine the behaviour of the constant implicit in the $O$ for our
specific case. All constants in our statements will be positive and finite. 
They could be replaced by explicit values throughout, however, we suppress these for the
sake of readability. Constants that we do not need to refer to later on will
be simply denoted $c$ or $C$, and such constants may change on each
appearance. Let $d_1 = 2 - \frac{d}{\gamma} \in (1,2)$.

\begin{theorem}
\label{thm:main}
There exist constants $C_2, C_3$ and $C_4$, and for any $k \ge 0$, there is an 
explicit expression $\tnf_k$, expressible in terms of 
$\tkappa_2, \dots, \tkappa_{k+2}$, such that for $r \ge 1$ and 
$-\frac{\mu(r)}{\sigma(r)} < y < \infty$ we have
\eqnspl{e:ew-density-formula-err}
{ f_{Y^{(r)}}(y) 
  = \tnf_k \, e^{ - \xi y } \, \phi_{Y^{(r)}}(-\xi) \,
    \left[ 1 + \eps_k(r,y) \right], }
where 
\eqnsplst
{ |\eps_k(r,y)| 
  &\le \frac{C_2 \, C_3^k \, k^{k/2}}{(\sqrt{\tkappa_2} \, r^{d/2})^{k+1}} \, 
  \le \frac{C_2 \, C_3^k \, k^{k/2}}{(r^{d/2})^{k+1}}, \quad 
      \text{when $y \ge 0$ and $r^{d/2} \ge C_4 \, \sqrt{k}$;} \\
  |\eps_k(r,y)| 
  &\le C_2 \, C_3^k \, k^{k/2} \, 
      \left( \frac{\tkappa_2^{\frac{1}{d_1} - \frac{1}{2}}}{r^{d/2}} \right)^{k+1} 
  \le \frac{C_2 \, C_3^k \, k^{k/2}}{(r^{d/2})^{k+1}}, \quad \\
  &\qquad\qquad \text{when $y \le 0$ and $r^{d/2} \ge \max \{ C_4 \sqrt{k},\, k \}$.} }
\end{theorem}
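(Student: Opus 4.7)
The starting point is the Fourier inversion formula for the tilted density:
\[
f_{Y^{(r)}}(y) = e^{-\xi y} \, \phi_{Y^{(r)}}(-\xi) \, f_{\tY^{(r)}}(y), \qquad f_{\tY^{(r)}}(y) = \frac{1}{2\pi} \int_{\R} e^{-ity} \chi_{\tY^{(r)}}(t) \, dt.
\]
The Edgeworth polynomial $\tnf_k$ arises by truncating the cumulant expansion \eqref{e:Edgeworth} at order $k+2$, expanding the second exponential as a power series, and retaining all monomials whose total power of $\rho^{-1}$ is at most $k$. This yields $\chi_{\tY^{(r)}}(t) \approx e^{ity - t^2 \tkappa_2 / 2} \bigl(1 + \sum_{j=1}^{k} Q_j(it)\bigr)$, where each $Q_j$ is an explicit polynomial in $it$ whose coefficients are built from $\tkappa_3, \dots, \tkappa_{j+2}$ divided by $\rho^j$. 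Inverse Fourier transforming term by term and evaluating at $y$ gives $\tnf_k$ as a linear combination of $1/\sqrt{2\pi \tkappa_2}$ and the values at zero of the Hermite polynomials associated to the Gaussian of variance $\tkappa_2$.

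The bulk of the work is to control the residual error. I would split the Fourier integral at $|t| = T$. On the inner region $|t| \le T$, the Taylor remainder in $\log \chi_{\tY^{(r)}}(t)$ is bounded by $C |t|^{k+3}/\rho^{k+1}$, and combining this with the Gaussian envelope $e^{-t^2 \tkappa_2/2}$ together with Stirling's formula applied to the moments $\int |t|^{k+3} e^{-t^2 \tkappa_2/2} dt$ produces the factor $k^{k/2}/(\sqrt{\tkappa_2}\, \rho)^{k+1}$ that appears in the first displayed bound. The natural choice $T \sim \sqrt{k/\tkappa_2}$ is what explains the hypothesis $r^{d/2} \ge C_4 \sqrt{k}$.

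The main obstacle is the outer region $|t| > T$, where one needs a uniform-in-$y$ decay estimate for $|\chi_{\tY^{(r)}}(t)|$. Using the Campbell-type representation
\[
\log \phi_{\oS^{(r)}}(z) = \lambda \int_{|x|>r} \bigl(e^{-z |x|^{-\gamma}} - 1\bigr) \, dx
\]
and analytic continuation to $z = \xi/\sigma(r) + it/\sigma(r)$, the real part yields a bound of essentially Gaussian type, $\exp(-c \, \tkappa_2 t^2)$, on a moderate range of $t$, degrading to $\exp(-c' \tkappa_2^{d_1} |t|^{d_1})$ for larger $|t|$ (the stable exponent $d/\gamma$ appearing after the change of variables $u = |x|^{-\gamma}$). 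When $y \ge 0$ one has $\xi \ge 0$ and $\tkappa_2$ is bounded below by a positive constant, so the Gaussian decay alone suffices. When $y < 0$, the tilt parameter $\xi$ may be negative and large in absolute value, concentrating $\tY^{(r)}$ near the lower support boundary $-\mu(r)/\sigma(r)$; here $\tkappa_2$ can be small and the characteristic function only decays on the scale $|t| \lesssim \tkappa_2^{-1/d_1}$, which is what produces the weaker pre-factor $\tkappa_2^{1/d_1 - 1/2}$ and forces the stronger hypothesis $r^{d/2} \ge k$. Combining the inner Taylor estimate with this outer decay, and finally noting the uniform bounds $\tkappa_2 \ge c > 0$ (in the $y \ge 0$ case) and $\tkappa_2^{1/d_1 - 1/2} \le C$ (in the $y \le 0$ case), yields the second inequalities.
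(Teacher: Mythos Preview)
Your outline has the right skeleton --- Fourier inversion for the tilted density, an inner Taylor estimate against the Gaussian envelope, and an outer tail-decay bound --- and this is indeed the paper's route. But several essential steps are either missing or misstated.

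First, a genuine gap: you never address why an additive bound on $|f_{\tY^{(r)}}(y) - \tnf_k|$ yields the multiplicative form $\tnf_k[1+\eps_k]$. For this one must show $\tnf_k \asymp 1/\sqrt{\tkappa_2}$ uniformly in $y$, which the paper proves separately (Lemmas~\ref{lem:right-order} and~\ref{lem:right-order-lower}). It is precisely here that the constant $C_4$ in the hypothesis $\rho \ge C_4\sqrt{k}$ originates --- not, as you suggest, from the choice of cutoff.

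Second, you conflate three approximation steps that the paper keeps separate: (i) replace $e^{-ity}\chi_{\tY^{(r)}}$ by $\tchi_k = \exp(-t^2\tkappa_2/2 + q_k(t))$; (ii) truncate the exponential series $e^{q_k}$ at order $k$ in $q_k$; (iii) from the resulting polynomial, discard all monomials of order higher than $\rho^{-k}$. Step (iii) involves a sum over $k+1 \le j \le k^2$ and is the most delicate (Lemmas~\ref{lem:upto-order-k-both} and~\ref{lem:upto-order-k-ew-lower}); it is where the stronger condition $\rho \ge k$ for the lower tail is actually used. Your sketch does not see this step at all.

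Third, the paper's cutoff is $|t| = \rho/2$ for $y \ge 0$ and $|t| = \zeta/3$ (with $\zeta = -\xi$) for $y \le 0$; neither depends on $k$. Your proposed $T \sim \sqrt{k/\tkappa_2}$ would make the outer-region contribution only of order $e^{-ck}$, which does not beat $(\sqrt{\tkappa_2}\,\rho)^{-(k+1)}$ when $\rho$ is large.

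Finally, the tail decay you wrote is incorrect: the relevant bound (Lemmas~\ref{lem:tail-bnd-tYr} and~\ref{lem:tchi-tail-tkappa2}) is $|\chi_{\tY^{(r)}}(t)| \le \exp(-c\,\rho^{d_1}|t|^{d/\gamma})$, with exponent $d/\gamma = 2-d_1$ on $|t|$ and prefactor $\rho^{d_1}$, not $\tkappa_2^{d_1}|t|^{d_1}$. The appearance of $\tkappa_2^{1/d_1-1/2}$ in the lower-tail error comes instead from the relation $\tkappa_2 \asymp (\zeta/\rho)^{-d_1}$ combined with the cutoff at $\zeta/3$.
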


\begin{remark}
\label{rem:improve}
An interesting feature of the error bound is that it \emph{improves} away from the mean.
Indeed, $\tkappa_2 > 1$ for $y > 0$ and $\tkappa_2 < 1$ for $y < 0$.
\end{remark}

In Remark \ref{rem:numerical} of Section \ref{sec:results} we explain how 
$\xi(y,r)$ and the required coefficients $\tkappa_n(y,r)$ can be computed efficiently.

\begin{remark}
Note the stronger requirement $r^{d/2} \ge k$ for the estimate on the lower tail
in Theorem \ref{thm:main}. In particular, our methods to prove 
Lemma \ref{lem:upto-order-k-ew-lower} in Section \ref{sssec:lower-tail-ew} 
break down when considering them only under $r^{d/2} \ge C_4 \, \sqrt{k}$. 
We do not know what is the optimal condition on $r$ for the same form of 
bound to hold.
\end{remark}

It may seem restrictive that we are considering here only $\oS^{(r)}$,
in that this is a very specific family of infinitely divisible distributions.
We believe that our methods could be applied in greater generality; a possible
extension would be to replace $r^{-\gamma}$ by $r^{-\gamma} L(r)$ for a
function $L$ slowly varying at infinity.
However, the technicalities in obtaining the error estimates 
are already considerable in our specific case, and hence we do not pursue
more general distributions here.

\subsection{The conditional density of $R_1$ given $S = s$}

In this section, we write down an easily implementable approximation 
for the conditional density 
of $R_1$ given $S = s$, 
and illustrate numerically that it performs well over a large range of values of $s$. 
Let us explain the ideas,
restricting to $d = 2$ and $\gamma = 4$ (when the density of $S$ is known).
Due to scaling properties, we can assume the Poisson density $\lambda$ to
be fixed, and we make the choice $\lambda = 3/\pi$ to make some formulas 
work out nicely.

Let $f_S(s)$ and $f_{R_1}(r_1)$ denote the probability 
densities of $S$ and $R_1$, respectively, and let 
$f_{R_1,S}(r_1,s)$ denote the joint density. 
We are interested in the conditional density
\eqn{e:cond-S-R1}
{ f_{R_1 \,|\, S} (r_1 \,|\, s)
  = \frac{f_{R_1}(r_1) \, f_{\oS^{(r_1)}} (s - r_1^{-4})}{f_S(s)}, \quad
    s^{-1/4} < r_1 < \infty,\, 0 < s < \infty, }
where the densities of $R_1$ and $S$, respectively, are 
(see Section \ref{sec:results} for further details): 
\eqnst
{ f_{R_1}(r_1)
  = 6 \, r_1 \, e^{ - 3 \, r_1^2}, \quad
    r_1 > 0, \qquad \text{ and } \qquad
  f_S(s)
  = \frac{3}{2} \, s^{-3/2} \, 
    \exp \left( - \frac{9 \, \pi}{4 s} \right),
    \quad s > 0. }
%
The mean and variance of $\oS^{(r)}$ are (see Section \ref{sec:results}):
\eqnst
{ \mu(r)
  = 3 \, r^{-2} \qquad \text{ and } \qquad
  \sigma^2(r)
  = r^{-6}. }
Since for large $r$, $\oS^{(r)}$ is approximately Gaussian,  
as a crude approximation, we can try to replace $\oS^{(r_1)}$ 
in formula \eqref{e:cond-S-R1} by a Gaussian. 
However, this does not give a satisfactory result numerically. 
We can improve the approximation with the following three ideas.
\begin{itemize}
\item[(i)] We integrate over the contribution of the nearest few 
points, say the nearest four. Conditionally on 
$R_i = r_i$, $i = 1, \dots, 4$, we write 
$S = r_1^{-4} + \dots + r_4^{-4} + \oS^{(r_4)}$, and replace
$f_{\oS^{(r_4)}}$ by a Gaussian density:
\eqnspl{e:normal-approx-1}
{ f_{R_1 \,|\, S} (r_1 \,|\, s)
  &= \frac{1}{f_S(s)} \iiint_{(r_2,r_3,r_4) : r_1 < \dots < r_4}
    f_{R_1,\dots,R_4}(r_1,\dots,r_4) \\ 
  &\qquad\quad \times f_{\oS^{(r_4)}} \left( s - r_1^{-4} - \dots - r_4^{-4} \right)
    \, dr_2 \, dr_3 \, dr_4 \\
  &\approx \frac{1}{f_S(s)} \iiint_{(r_2,r_3,r_4) : r_1 < \dots < r_4}
    f_{R_1,\dots,R_4}(r_1,\dots,r_4) \\
  &\qquad\quad \times 
    \frac{1}{\sigma(r_4)} \, \nf \left( \frac{s - r_1^{-4} - \dots - r_4^{-4} 
    - \mu(r_4)}{\sigma(r_4)} \right) \, dr_2 \, dr_3 \, dr_4, }
where $\nf(x)$ is the standard normal density.
One of the three integrations (an incomplete beta integral) can be 
carried out analytically, when $d/\gamma = 1/2$. The next integration 
presents an elliptic integral. We give these calculations in 
Section \ref{sec:integration}.
In Figure \ref{fig:R1_cond_S_plots} we show the result of carrying out the 
integration over $r_2, r_3, r_4$ in formula \eqref{e:normal-approx-1}, 
compared to a simulation from the conditional distribution.
\begin{figure}
	\centering
		\includegraphics[scale=0.5]{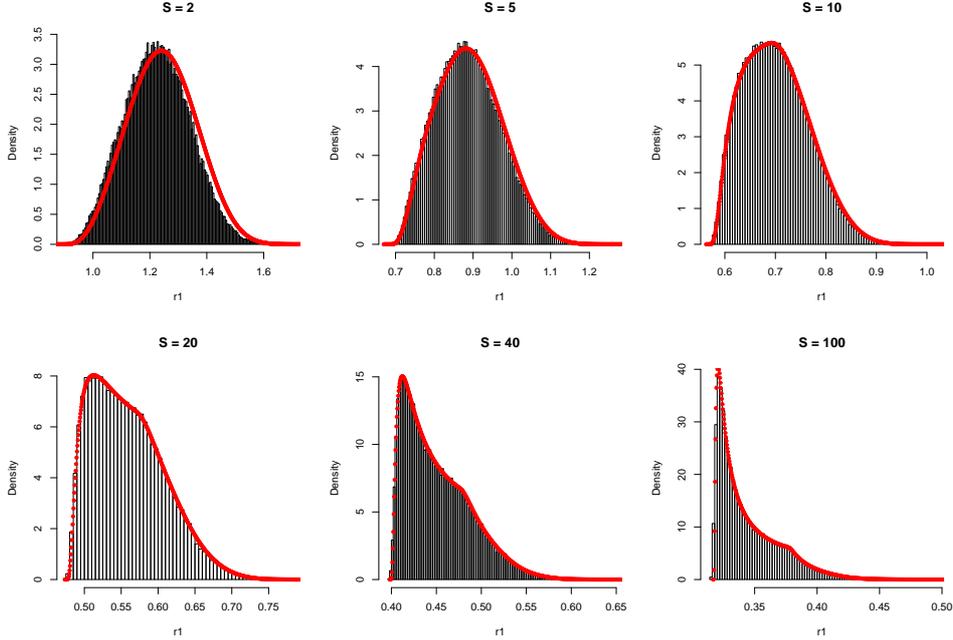}
\caption{The normal approximation of \eqref{e:normal-approx-1}
compared to a direct simulation from the conditional distribution.}		
	\label{fig:R1_cond_S_plots}
\end{figure}
The approximation compares well with the simulation over 
different values of $s$.

\item[(ii)] Via the transformation in \eqref{e:tYr-def}, 
the point of approximation becomes the mean. This allows us to get 
uniform relative error in the approximation, at the expense of 
having to compute $\xi(y,r)$. 

\item[(iii)] The precision of the approximation can be increased
arbitrarily, in principle, if we integrate numerically over $r_i$, 
$i = 1, \dots, \ell$, and approximate $f_{\oS^{(r_\ell)}}$.
The order of the error improves, if we replace the normal
approximation by the Edgeworth expansion of an appropriate order 
depending on $\ell$.
\end{itemize} 

The next theorem formalizes the above approximation scheme and gives a
rigorous error estimate.
Fix a number $a_0$ such that $0 < a_0 < (d/\gamma)/d_1$. Let 
\eqnsplst
{ \ff^{\ell,k}_{R_1,S} (r_1,\, s)
  &:= \int_{r_1}^\infty dr_2 \, \cdots 
    \int_{r_{\ell-1}}^\infty dr_{\ell} \, 
    f_{R_1, \dots, R_{\ell}} \left( r_1, \dots, r_{\ell} \right) \,
    \fg_{\ell,k}(y, r_\ell), }
where
\eqnsplst
{ \fg_{\ell,k}(y,r)
  &= \begin{cases}
     \frac{1}{\sigma(r)} \,    
     \tnf_k \,  
     e^{ - \xi(y,r) y } \, \phi_{Y^{(r)}}(-\xi(y,r))  
     & \text{when $r > (a_0 \, \ell)^{1/d}$;} \\
     0 & \text{when $r \le (a_0 \, \ell)^{1/d}$} 
     \end{cases} }
and
\eqnst
{ y 
  = y(s, r_1, \dots, r_\ell)
  = \frac{s - r_1^{-\gamma} - \dots - r_\ell^{-\gamma} - \mu(r_\ell)}{\sigma(r_\ell)}. }

\begin{theorem}
\label{thm:combined}
There are constants $C_5, C_6, c_1$ such that when $\ell \ge C_5$ and
$k = \lfloor \sqrt{a_0 \, \ell} \rfloor$, then we have
\eqn{e:diff-DF}
{ \left| \prob [ R_1 \le r \,|\, S = s ] 
    - \frac{1}{f_S(s)} \, \int_{s^{-1/\gamma}}^r \ff^{\ell,k}_{R_1,S} (r_1,\, s) \, dr_1 \right|
  \le C_6 \, e^{-c_1 \sqrt{\ell} \log \ell}, \quad r > s^{-1/\gamma},\, s > 0. }
\end{theorem}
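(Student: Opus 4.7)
The plan is to write the exact joint density $f_{R_1, S}(r_1, s)$ in a form that parallels the ansatz $\ff^{\ell, k}_{R_1, S}$, then isolate two quantitatively separable sources of error. Since the Poisson process restricted to $\{|x| > r_\ell\}$ is independent of its restriction to $\{|x| \le r_\ell\}$, conditioning on $(R_1, \ldots, R_\ell) = (r_1, \ldots, r_\ell)$ leaves $\oS^{(r_\ell)}$ distributed as an unconditional tail sum; a change of variables $\oS^{(r_\ell)} \mapsto S = \sum_{i=1}^\ell R_i^{-\gamma} + \oS^{(r_\ell)}$ then yields
\[
  f_{R_1, S}(r_1, s)
  = \int_{r_1}^\infty dr_2 \cdots \int_{r_{\ell-1}}^\infty dr_\ell \;
    f_{R_1, \ldots, R_\ell}(r_1, \ldots, r_\ell) \;
    f_{\oS^{(r_\ell)}}\bigl(s - r_1^{-\gamma} - \cdots - r_\ell^{-\gamma}\bigr).
\]
Combining this with Bayes' formula $\Pr[R_1 \le r \mid S = s] = f_S(s)^{-1} \int_{s^{-1/\gamma}}^r f_{R_1, S}(r_1, s)\, dr_1$ and splitting the $r_\ell$-integration at the threshold $(a_0 \ell)^{1/d}$ expresses the error in \eqref{e:diff-DF} as an \emph{Edgeworth contribution} (on $\{r_\ell > (a_0\ell)^{1/d}\}$, where $\fg_{\ell, k}$ replaces $f_{\oS^{(r_\ell)}}$) plus a \emph{truncation contribution} (on $\{r_\ell \le (a_0\ell)^{1/d}\}$, where $\fg_{\ell, k}$ is zero by definition).

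For the Edgeworth part I apply Theorem~\ref{thm:main} with $y = y(s, r_1, \ldots, r_\ell)$, obtaining the pointwise identity $f_{\oS^{(r_\ell)}}(\cdot) = \fg_{\ell, k}(y, r_\ell)\,[1 + \eps_k(r_\ell, y)]$ and hence an error integrand $\fg_{\ell, k}\, |\eps_k|$. Once $\sup|\eps_k| \le 1/2$, the elementary inequality $\fg_{\ell, k} \le 2 f_{\oS^{(r_\ell)}}$ converts this into $2|\eps_k|\, f_{R_1, \ldots, R_\ell}\, f_{\oS^{(r_\ell)}}$, and integration produces the uniform bound $2 \sup|\eps_k| \cdot \Pr[R_1 \le r \mid S = s] \le 2 \sup|\eps_k|$. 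The choice $k = \lfloor \sqrt{a_0 \ell}\rfloor$ combined with the cutoff $r_\ell^d > a_0 \ell$ guarantees $r_\ell^{d/2} \ge \sqrt{a_0\ell} \ge k$, so even the stronger lower-tail hypothesis $r^{d/2} \ge k$ of Theorem~\ref{thm:main} is met, and the estimate there collapses to
\[
  |\eps_k(r_\ell, y)|
  \le \frac{C_2\, C_3^k\, k^{k/2}}{(a_0 \ell)^{(k+1)/2}}
  \le \frac{C_2}{k}\Bigl(\frac{C_3}{\sqrt{k}}\Bigr)^{k}
  = \frac{C_2}{k}\, \exp\Bigl(-\tfrac{k}{2}\log k + k \log C_3\Bigr).
\]
With $k \asymp \sqrt{\ell}$ this is bounded by $e^{-c_1 \sqrt{\ell}\log\ell}$ for all $\ell \ge C_5$, uniformly in the integration variables.

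For the truncation part, after integration in $r_1$ and division by $f_S(s)$ the error is at most $\Pr[R_\ell \le (a_0 \ell)^{1/d} \mid S = s]$, and here I invoke the stochastic comparison between $(R_k)$ given $S = s$ and the unconditional $(R'_k)$ announced in the abstract to dominate this conditional probability by $\Pr[R'_\ell \le (a_0\ell)^{1/d}] = \Pr[\mathrm{Poi}(\lambda \omega_d a_0 \ell) \ge \ell]$. The normalization $\lambda \omega_d = (2\gamma - d)/d$ (implied by $c' = 1$) together with the standing assumption $a_0 < (d/\gamma)/d_1 = d/(2\gamma - d)$ make the Poisson mean strictly less than $\ell$, so a standard Chernoff bound yields $e^{-c\ell}$, much smaller than $e^{-c_1\sqrt{\ell}\log\ell}$. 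The main technical point is the coupling between the Edgeworth order $k$ and the cutoff: the lower-tail hypothesis $r^{d/2} \ge k$ of Theorem~\ref{thm:main} (strictly stronger than the upper-tail hypothesis $r^{d/2} \ge C_4\sqrt{k}$) locks $k \le \sqrt{a_0\ell}$ on the integration region, and pushing $k$ any larger would either violate this hypothesis or force a larger cutoff and thereby spoil the Poisson truncation estimate. The rate $\sqrt{\ell}\log\ell$ emerges as the optimum of this trade-off and is the primary obstacle to a sharper bound.
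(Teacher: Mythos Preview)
Your proof is correct and follows essentially the same route as the paper: split the $(r_1,\dots,r_\ell)$-integration at the threshold $r_\ell = (a_0\ell)^{1/d}$, apply Theorem~\ref{thm:main} on the outer region (using the same $\fg_{\ell,k}\le 2\,f_{\oS^{(r_\ell)}}$ device once $|\eps_k|\le 1/2$), and bound the inner region by $\Pr[R_\ell\le (a_0\ell)^{1/d}\mid S=s]$ via the stochastic comparison and a Chernoff bound. One small correction: the comparison actually established (Theorem~\ref{thm:stoch-mon}) yields $R'_{i-1}\le R_i$ under the coupling, not $R'_i\le R_i$, so the valid domination is $\Pr[R_\ell\le (a_0\ell)^{1/d}\mid S=s]\le \Pr[R'_{\ell-1}\le (a_0\ell)^{1/d}]$; this index shift is immaterial for the exponential bound you then derive.
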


\begin{remark}
The above theorem applies just as well, if the path-loss function behaves
differently in a neighbourhood of $0$, for example, if 
\eqnst
{ f(r)
  = \begin{cases}
    r^{-\gamma} & \text{when $r > r_0$;} \\
    r_0^{-\gamma} & \text{when $0 \le r \le r_0$,}
    \end{cases} }
and $S = \sum_k f(R_k)$. For this case, we merely have to 
choose $C_5$ large relative to $r_0$, and change the 
definition of $y$ to 
$(s - f(r_1) - \dots - f(r_\ell) - \mu(r_\ell))/\sigma(r_\ell)$. 
In this case, no series expansion would be available to compute 
$f_S(s)$. Instead, the approximation 
$f_S(s) \approx \int_0^\infty \ff^{\ell,k}_{R_1,S} (r_1,\, s) \, dr_1$
can be used.
\end{remark}

The following stochastic comparison plays a key role in 
ensuring that the error bound in Theorem \ref{thm:combined} is
uniform in $s$. Let $(R'_1, R'_2, \dots)$ have the
same law as $(R_1, R_2, \dots)$.

\begin{theorem}
\label{thm:stoch-mon}
Let $\gamma > d$ and let $0 < s < \infty$. 
There is a coupling between the conditional law of
the collection $(R_1, R_2, \dots)$ given $S = s$ 
and the unconditional law of the
collection $(R'_1, R'_2, \dots)$, such that a.s.~we have 
$R'_{i-1} \le R_i$ for all $i \ge 2$.
\end{theorem}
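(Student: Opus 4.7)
The plan is to reformulate the claim as a stochastic domination between counting processes and to construct the coupling via Strassen's theorem. Write $M(r)$ for the count function $|\{k : R_k \le r\}|$ under the conditional law (given $S = s$) and $M'(r)$ for $|\{k : R'_k \le r\}|$ under the unconditional law. The pointwise inequality $R'_{i-1} \le R_i$ for all $i \ge 2$ is equivalent to $M(r) \le M'(r) + 1$ for all $r > 0$ in the coupling, so by Strassen applied to counting processes on $[0,\infty)$ it suffices to prove that for every finite $0 < r_1 < \dots < r_n$ and non-decreasing integer vector $(k_1, \dots, k_n)$,
\[
\prob \left( M(r_i) \le k_i \text{ for all } i \,|\, S = s \right)
  \ge \prob \left( M'(r_i) \le k_i - 1 \text{ for all } i \right).
\]

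First I would handle the marginal case $n = 1$. Decomposing on the count $j := |N \cap B_r|$ (with $N$ the unconditional Poisson process): given $j$, the $j$ points inside $B_r$ are iid uniform and their combined contribution to $S$ is independent of $\oS^{(r)}$. With $\Lambda_r := \E[|N \cap B_r|]$, $q_j := \Lambda_r^j e^{-\Lambda_r}/j!$, $g_r$ the density of $|X|^{-\gamma}$ for $X \sim \Unif(B_r)$, and $h_r := f_{\oS^{(r)}}$, Bayes yields
\[
\prob \left( M(r) = j \,|\, S = s \right)
  = \frac{q_j \, (g_r^{*j} * h_r)(s)}{\sum_{\ell \ge 0} q_\ell \, (g_r^{*\ell} * h_r)(s)}.
\]
The target inequality $\prob(M(r) \le k \,|\, S = s) \ge e^{-\Lambda_r} \sum_{j=0}^{k-1} \Lambda_r^j/j!$ then reduces to a weighted comparison of partial sums, which I would verify using a suitable monotonicity property of $j \mapsto (g_r^{*j} * h_r)(s)$.

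To lift this to the joint comparison needed for the coupling, I would use the spatial Markov property of the Poisson process: conditionally on the configuration inside $B_{r_1}$ (and hence on its contribution to $S$), the points in the complement of $B_{r_1}$ form a Poisson process on that complement conditioned only on $\oS^{(r_1)}$ taking the matching residual value. This turns the $n$-ball comparison into a nested one in which the ``$+1$'' slack is consumed at the innermost scale $r_1$, and each outer shell is then controlled by re-applying the marginal bound to the residual-conditioned outer process.

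The main obstacle is the marginal step itself: the ratio $(g_r^{*(j+1)} * h_r)(s)/(g_r^{*j} * h_r)(s)$ is not obviously monotone in $j$, so establishing the required positivity requires a careful analysis of the convolutions involving the stable-tail density $h_r$, for example via their characteristic functions. A complementary handle comes from the Campbell--Mecke identity: the reduced Palm distribution of the conditioned process at a point $X_*$ size-biased with weight $|X_*|^{-\gamma}$ equals the law of the process conditioned on $S = s - |X_*|^{-\gamma}$ (by Slivnyak applied to the Poisson law underlying the conditioning), and iterating this peeling provides an exact distributional framework for organizing the induction.
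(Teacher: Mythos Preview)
Your Strassen reduction does not work as stated. The condition you propose to verify --- $\prob(M(r_i)\le k_i\text{ for all }i\mid S=s)\ge\prob(M'(r_i)\le k_i-1\text{ for all }i)$ for all finite $r_1<\dots<r_n$ and non-decreasing $(k_i)$ --- is the \emph{lower orthant} order between the finite-dimensional laws of $M$ and $M'+1$, and this is strictly weaker than the stochastic order Strassen needs to produce a pointwise coupling, even for non-decreasing integer-valued paths. Translating to jump times, your condition reads $\prob(R_j>a_j\text{ for all }j)\ge\prob(R'_{j-1}>a_j\text{ for all }j)$, and already in two coordinates one has counterexamples: take $X$ uniform on $\{(0,2),(1,1)\}$ and $Y$ uniform on $\{(0,1),(1,2)\}$; then $Y$ upper-orthant dominates $X$, yet no coupling with $Y\ge X$ componentwise exists. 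So even if the orthant inequalities held, they would not deliver the coupling $M\le M'+1$.

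Independently of this, the core analytic step is missing: you correctly identify that the marginal bound reduces to a monotonicity-type statement about $j\mapsto (g_r^{*j}*h_r)(s)$, but you do not prove it, and the Palm/Campbell--Mecke identity you quote, while true, does not supply it either. The paper circumvents both issues by a completely different mechanism. It first proves a finite version (Proposition~\ref{prop:finite-coupling}) for $n$ i.i.d.\ uniform points conditioned on $\sum U_j^{-\gamma/d}=s$: the case $n=2$ is handled by an explicit monotone transport along the one-dimensional conditioning curve, and the general case is obtained by a Gibbs-sampler coupling that repeatedly resamples two coordinates at a time using the $n=2$ map. It then passes to the Poisson limit via the density convergence of Lemma~\ref{lem:density-conv}. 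The key idea you are lacking is this local two-point comparison plus Gibbs sampling, which replaces any global convolution inequality by a purely two-coordinate geometric argument.
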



\subsection{Related works} 

A lower bound on $R_1$ is $S^{-1/\gamma}$, and
when $S$ is large, this is a good first approximation.
The first term in the right hand side of \eqref{e:signal} dominates the sum,
in the sense that $R_1^{-\gamma}$ has the same tail behaviour as $S$
\cite[Section 1.4]{STbook}.
The error of the simple heuristic $R_1 \approx S^{-1/\gamma}$ for
large $S$ is considered in \cite{SW15,MBDJ18}.

A more general setup than considered above is to study a random field of the form 
$S_f(x) = \sum_{P} f(P - x)$, $x \in \R^d$, where the summation 
is over points $P$ of the Poisson process (called Poisson shot noise). 
If $f$ is the function $f_r(y) = |y|^{-\gamma} \, \mathbf{1}_{|y| > r}$,
then $\oS^{(r)} = S_{f_r}(o)$, where $o$ is the origin.
Rice \cite[Section 1.6]{R44} proved that under certain general conditions on $f$, 
$S_{f}(o)$ approaches a normal law as the Poisson density approaches infinity.
It follows from this that $S_{f_r}(o)$ is asymptotically normal as $r \to \infty$
(after rescaling the Poisson process so that $r$ becomes $1$).
Rice also states the Edgeworth expansion around this normal limit. 
Lewis \cite{L73} gives error estimates (for a slightly modified version)
for general $f$ and all orders, with the dependence on $f$ implicit.
Explicit error estimates for the normal and Edgeworth approximation 
of infinitely divisible distributions, inclusing Poisson shot noise,
were considered by Lorz and Heinrich \cite{LH91}. They considered the
supremal additive error in approximating the distribution function, with 
error estimates given in detail for the second order Edgeworth approximation.  
A novelty of our work is that we provide details of the estimate
for all orders, giving the dependence of the error term on the order.
By considering the transformed distribution 
we get very good relative error estimates (the relative error 
improves away from the mean).
All constants in our estimates could be made explicit (with tedious but
straightforward arguments), but we refrained from doing so for the sake
of readability.

A possible alternative approach to approximating $\oS^{(r)}$ 
would be to find a suitable series expansion for the density. 
Feller \cite[Section XVII.6]{FII} gives a pointwise convergent series
for stable densities, and Zolotarev \cite{Zbook} studies their analytical 
properties in detail. We note that, unlike in the case of the stable 
random variable $S$, the logarithm of the characteristic function of $\oS^{(r)}$ 
is no longer given in closed form,
which we believe makes it more problematic to derive a useful series.
Another alternative would be to generalize 
the series of Brockwell and Brown \cite{BB78} using Laguerre polynomials.
We believe that the merit of our approach compared to these possibilities is that 
it is more probabilistic, and that the Edgeworth estimates we develop here are
of interest in their own right, and possibly apply to more general
infinitely divisible families.

In the context of wireless applications, Baccelli and Biswas \cite{BB15} consider 
the joint distribution of the signals measured at a finite number of points 
(with a more general pathloss function than ours), and show asymptotic 
independence as the Poisson density approaches infinity. 
They also consider percolative properties of a random graph defined in terms of 
signal-to-interference ratios.

\bigbreak

The rest of the paper is organized as follows.
In Section \ref{sec:results} we collect some preliminary results and define 
the quantities appearing in Theorem \ref{thm:main}. 
In Section \ref{sec:both-tails} we prove Theorem \ref{thm:main}.
In Section \ref{sec:stoch-mon}
we prove Theorem \ref{thm:stoch-mon} and use it to prove 
Theorem \ref{thm:combined} building on the technical estimate
of Theorem \ref{thm:main}. 
In Section \ref{sec:integration} we show that when $d/\gamma = 1/2$ and 
we take $\ell \ge 4$, one integration over $(r_1, \dots, r_\ell)$ can
be carried out analytically.

\section{Preliminaries}
\label{sec:results}

Recall that $R_1 < R_2 < \dots$ are the radii of the points of 
a Poisson process in $\R^d$ of intensity $\lambda > 0$. It will sometimes be
convenient to consider the following finite version: let 
$R^{(n)}_1 < \dots < R^{(n)}_n$ be the radii of $n$ independent
points chosen uniformly at random from the ball of volume $n / \lambda$
centred at the origin.

Recall that $\oS^{(r)} = \sum_{i : R_i > r} R_i^{-\gamma}$, $r \ge 0$. 
The following lemma, whose proof follows easily from large deviation bounds for 
Binomial and Poisson variables, and is left to the reader,
implies in particular that the sum 
defining $\oS^{(r)}$ converges almost surely for all $\gamma > d$, $r \ge 0$.

\begin{lemma}
\label{lem:large-dev-bnd}
There exist constants $c = c(\lambda,d)$ and $C = C(\lambda,d)$ such that for all 
$k \ge 0$ and $r \ge 0$, we have
\eqnspl{e:large-dev}
{ \prob \left[ \# \{ j : r + k \le R^{(n)}_j < r + k + 1 \} \ge C (r + k)^{d-1} \right]
  &\le e^{- c (r + k)} \\
  \prob \left[ \# \{ j : r + k \le R_j < r + k + 1 \} \ge C (r + k)^{d-1} \right]
  &\le e^{- c (r + k)}. }
\end{lemma}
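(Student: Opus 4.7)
The plan is to apply a standard Chernoff bound for Poisson and binomial variables directly to the shell count. By Poisson thinning (and its binomial analogue), the count in any annular shell is a sum of independent contributions with an explicit mean, so the proof reduces to one mean estimate plus one Chernoff inequality.

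Let $V_d$ denote the volume of the unit ball in $\R^d$. In the Poisson model, $N_{r,k} := \#\{j : r+k \le R_j < r+k+1\}$ is Poisson with parameter
\[ \mu_{r,k} \,=\, \lambda V_d \bigl[(r+k+1)^d - (r+k)^d\bigr] \,\le\, C_0 (r+k+1)^{d-1}, \]
where $C_0 = C_0(\lambda, d)$ comes from the mean value theorem. Applying the Chernoff bound $\prob[X \ge t] \le \exp(-t \log(t/\mu) + t - \mu)$ with $t = C(r+k)^{d-1}$, and choosing $C$ large enough that $t/\mu$ exceeds a fixed constant larger than $e$, yields
\[ \prob\bigl[ N_{r,k} \ge C(r+k)^{d-1} \bigr] \,\le\, \exp\bigl(-c_0 (r+k)^{d-1}\bigr) \,\le\, \exp\bigl(-c_0 (r+k)\bigr) \]
for some $c_0 = c_0(\lambda, d) > 0$, using $(r+k)^{d-1} \ge r+k$ when $d \ge 2$ and $r+k \ge 1$. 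The remaining range $r+k < 1$ is trivial: there $e^{-c(r+k)} \ge e^{-c}$, and it suffices to enlarge $C$ so that the shell count exceeds $C$ with probability at most $e^{-c}$ in the worst case. The binomial statement is identical: the corresponding count is $\mathrm{Bin}(n, p_{r,k})$ with $n p_{r,k} \le \mu_{r,k}$, and the count is identically zero (so the bound is trivial) whenever the shell lies outside the ambient ball of volume $n/\lambda$.

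I do not anticipate any real obstacle; the only step requiring a moment's thought is the passage from the exponent $(r+k)^{d-1}$ to $(r+k)$, which leans on $d \ge 2$ in the large-$r$ regime. This is consistent with the author's remark that the proof is left to the reader.
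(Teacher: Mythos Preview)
Your approach is exactly what the paper intends: the proof is left to the reader with the remark that it ``follows easily from large deviation bounds for Binomial and Poisson variables,'' and your Chernoff argument is precisely that.

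Two small corrections. Your treatment of the range $r+k < 1$ does not work as written: there the threshold $C(r+k)^{d-1}$ falls below $1$ (for $d \ge 2$), so the event $\{N_{r,k} \ge C(r+k)^{d-1}\}$ \emph{contains} $\{N_{r,k} \ge C\}$ rather than the reverse, and enlarging $C$ does not help. The correct fix is to shrink $c$ instead: for $r+k \in (0,1]$ the event in question is contained in $\{N_{r,k} \ge 1\}$, whose probability is at most $1 - e^{-\mu_{\max}} < 1$ with $\mu_{\max} = \sup_{0 \le r+k \le 1} \mu_{r,k} < \infty$, and one then takes $c$ small enough that $e^{-c}$ exceeds this bound (the case $r+k=0$ is vacuous since both sides equal $1$). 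Second, your caveat about $d \ge 2$ is well founded and not merely cosmetic: for $d = 1$ the shell has Lebesgue measure $2$ regardless of $r,k$, so $N_{r,k}$ is Poisson with mean $2\lambda$ and $\prob[N_{r,k} \ge C]$ is a fixed positive constant that cannot be bounded by $e^{-c(r+k)}$ for large $r+k$. The lemma as stated therefore appears to require $d \ge 2$, even though the paper's introduction allows $d \ge 1$.
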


We will need to approximate $S$ by
$S^{(n)} := \sum_{j=1}^n \left( R^{(n)}_j \right)^{-\gamma}$.
The following lemma provides a quantitative estimate on the rate of convergence. 
An estimate of this type was proved by Cram\'er \cite{C62} (who gave the
details in the symmetric stable case). For the sake of
being self-contained, we provide a proof in Appendix \ref{a:calc}. 

\begin{lemma}
\label{lem:density-conv}
There exist $C = C(\gamma,d)$, $\delta = \delta(\gamma,d)$ such that 
$\sup_{s \in \R} \left| f_{S^{(n)}}(s) - f_{S}(s) \right| \le C \, n^{-\delta}$.
\end{lemma}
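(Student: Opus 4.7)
The approach is Fourier inversion together with careful estimates on the characteristic functions. Since $S$ is a positive stable law of index $d/\gamma\in(0,1)$ its characteristic function is integrable, and $S^{(n)}$ is a sum of $n$ i.i.d.~absolutely continuous random variables, so both densities admit the inversion representation
\eqnst
{ f_S(s)-f_{S^{(n)}}(s)
  = \frac{1}{2\pi}\int_{\R}e^{-its}\bigl[\chi_S(t)-\chi_{S^{(n)}}(t)\bigr]\,dt. }
Taking absolute values inside the integral and estimating uniformly in $s$ reduces the problem to bounding $\|\chi_S-\chi_{S^{(n)}}\|_{L^1(\R)}$.

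Writing $B_n$ for the centred ball of volume $n/\lambda$ and radius $r_n\asymp n^{1/d}$, polar integration yields the standard Poisson-process log-Laplace representation $\log\chi_S(t)=\lambda\int_{\R^d}(e^{it|x|^{-\gamma}}-1)\,dx$, while $\log\chi_{S^{(n)}}(t)=n\log(1+\alpha_n)$ with $\alpha_n=(\lambda/n)\int_{B_n}(e^{it|x|^{-\gamma}}-1)\,dx$. Thus
\eqnst
{ \log\chi_S(t)-\log\chi_{S^{(n)}}(t)
  = \lambda\int_{B_n^c}(e^{it|x|^{-\gamma}}-1)\,dx
    + \bigl[n\alpha_n-n\log(1+\alpha_n)\bigr]. }
The first term is bounded by $C|t|\,r_n^{d-\gamma}$ via $|e^{iz}-1|\le|z|$ (integrable since $\gamma>d$), while the second term is $O(n|\alpha_n|^2)$ by the Taylor bound on $\log(1+z)$, provided $|\alpha_n|\le\tfrac12$.

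I then split the $L^1$ integral at a threshold $T_n=n^{\beta}$ with $\beta>0$ chosen later. For $|t|\le T_n$, with $\beta$ small enough to force $|\alpha_n|\le\tfrac12$, I apply the elementary inequality $|e^a-e^b|\le|a-b|\max(e^{\Re a},e^{\Re b})$ in combination with the standard stable decay $|\chi_S(t)|\le e^{-c|t|^{d/\gamma}}$ and a matching bound for $|\chi_{S^{(n)}}(t)|$ derived from
\eqnst
{ -\Re\alpha_n
  = \frac{\lambda}{n}\int_{B_n}(1-\cos(t|x|^{-\gamma}))\,dx
  \asymp \frac{|t|^{d/\gamma}}{n}\qquad (|t|\le c\,r_n^\gamma). }
This contributes a positive power of $1/n$ to the $L^1$ norm. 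For $|t|>T_n$, both $\int_{|t|>T_n}|\chi_S(t)|\,dt$ and $\int_{|t|>T_n}|\chi_{S^{(n)}}(t)|\,dt$ are stretched-exponentially small in $T_n^{d/\gamma}$, hence negligible compared with any polynomial in $1/n$. Optimising $\beta$ yields the estimate with an explicit $\delta=\delta(\gamma,d)>0$.

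The main obstacle is controlling $|\chi_{S^{(n)}}(t)|$ over the full intermediate range of $|t|$, where the Taylor approximation of $\log(1+\alpha_n)$ may fail. There one must work from the identity $|1+\alpha_n|^2=1+2\Re\alpha_n+|\alpha_n|^2$ combined with the Cauchy--Schwarz inequality $|\alpha_n|^2\le -2\Re\alpha_n$ (which gives $|\chi_{S^{(n)}}(t)|\le 1$ outright) and then extract genuine decay from the $-\Re\alpha_n$ contribution using the power-law geometry. The delicate point is the transitional regime $|t|\sim r_n^\gamma$, where the behaviour of $\alpha_n$ ceases to be governed by the bulk and becomes sensitive to the cutoff at the boundary of $B_n$; this is where balancing the two error contributions determines the final value of $\delta$.
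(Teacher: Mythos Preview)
Your approach via Fourier inversion and an $L^1$ bound on $\chi_S-\chi_{S^{(n)}}$ is exactly the paper's. The paper organises the computation through the single-summand characteristic function $\chi_W(s)=\E[e^{isU^{-\gamma/d}}]$ and the scaling $\chi_{S^{(n)}}(t)=\chi_W(t/r_n^{\gamma})^n$, which is your $1+\alpha_n$ in different notation; your treatment of the central range $|t|\le T_n$ via the Taylor expansion of $\log(1+\alpha_n)$ is equivalent to the paper's expansion of $\chi_W$ near the origin.

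The gap you yourself flag is real, however, and your sketch does not close it. The Cauchy--Schwarz inequality $|\alpha_n|^2\le -2\Re\alpha_n$ yields only $|1+\alpha_n|\le 1$, which is not integrable; to get the decay you claim you would need a \emph{quantitative} lower bound on $1-|1+\alpha_n|^2=\mathrm{Var}\bigl(e^{it|X|^{-\gamma}}\bigr)$, and this does not follow from the power-law geometry alone once $|t|$ reaches order $r_n^\gamma$. The paper supplies two further ingredients you are missing. First, for very large arguments one has the explicit oscillatory-integral bound $|\chi_W(s)|\le C|s|^{-d/\gamma}$ (from the formula $\chi_W(s)=c|s|^{d/\gamma}\int_{c'|s|}^\infty e^{i\,\mathrm{sgn}(s)v}v^{-1-d/\gamma}\,dv$), which forces $|\chi_{S^{(n)}}(t)|\le 2^{-n}$ once $|t|\ge t_0 r_n^\gamma$. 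Second, on the remaining compact window $\epsilon_0\le |t|/r_n^\gamma\le t_0$ one uses that $W$ has a density and is therefore non-lattice, so $\sup_{\epsilon_0\le|s|\le t_0}|\chi_W(s)|<1$, again giving exponential decay in $n$. With these two pieces added, your outline becomes a complete proof identical in substance to the paper's.
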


Recall that
\eqnsplst
{ Y^{(r)} 
  &= \frac{ \oS^{(r)} - \mu(r) }{\sigma(r)}, \qquad \text{where }
  \mu(r) = \E \left[ \oS^{(r)} \right], \quad \sigma^2(r) = \Var \left(\oS^{(r)}\right). }
Note that since $\oS^{(r)} > 0$ a.s., we have $Y^{(r)} > -\frac{\mu(r)}{\sigma(r)}$ a.s. 

Next, we compute the Laplace transforms of $\oS^{(r)}$ and $Y^{(r)}$.
Writing $|\bx|$ for the Euclidean norm of $\bx \in \R^d$, 
and $\omega_{d-1}$ for the measure of the $d-1$-sphere, 
we have:
\eqnsplst
{ \varphi_{\oS^{(r)}}(t)
  &:= \E \left[ e^{-t \oS^{(r)}} \right]
  = \exp \left( \int_{\{ \bx : |\bx| > r \}} \left( 
    e^{-t \, |\bx|^{-\gamma}} - 1 \right) \lambda\, d\bx \right) \\
  &= \exp \left( \lambda \, \omega_{d-1} 
    \int_{r}^\infty q^{d-1} \, \left( e^{-t q^{-\gamma}} - 1 \right) \, 
    dq \right). }
When $r = 0$, this shows that 
$\varphi_{S}(t) = \exp ( - b_1 \, t^{d/\gamma} )$, 
with $b_1(\lambda,\gamma,d) = \frac{\lambda \, \omega_{d-1}}{d} \Gamma(1 - d/\gamma) > 0$, 
and hence $S$ has a one-sided stable distribution of index $d/\gamma$
\cite[Section XVII.5]{FII}.
For $r > 0$, we change variables via $q = r \, u^{-1/\gamma}$, 
which gives
\eqnsplst
{ \varphi_{\oS^{(r)}}(t)
  &= \exp \left( \frac{\lambda \, \omega_{d-1}}{\gamma} \, 
     r^d \, \int_0^1 \left( e^{- r^{-\gamma} t u} - 1 \right)\, 
     u^{-1-d/\gamma} \, du \right). }
Letting $a_1 = a_1 (r) = (\lambda \omega_{d-1} / \gamma) \, r^d$, and 
$a_2 = a_2 (r) = r^{-\gamma}$, we can then write 
\eqn{e:Laplace-scaled}
{ \varphi_{\oS^{(r)}}(t)
  = \exp \left( a_1 \, \psi ( a_2 t ) \right), }
where 
\eqnspl{e:psi-formula}
{ \psi(a_2 \, t)
  &= \int_0^1 \left( e^{-a_2 t u} - 1 \right) \, u^{-1-d/\gamma} \, du 
  = \sum_{n=1}^\infty \frac{(-a_2)^n t^n}{n! \, (n - d/\gamma)}. }
In particular, the mean and variance of $\oS^{(r)}$ are
\eqnsplst
{ \kappa_1
  = \kappa_1(r)
  = \mu(r)
  = \frac{a_1 a_2}{1 - d/\gamma} \qquad \text{ and } \qquad  
  \kappa_2
  = \kappa_2(r)
  = \sigma^2(r)
  = \frac{a_1 a_2^2}{2 - d/\gamma}, }
and the higher order cumulants are given by
\eqnst
{ \kappa_n
  = \kappa_n(r)
  = \frac{a_1 a_2^n}{n-d/\gamma}, \quad n \ge 3. }

Since the value of $\lambda$ can be fixed by scaling, we specialize to 
$\lambda = (2\gamma - d)/\omega_{d-1}$, which 
yields the simple form $\sigma(r) = r^{\frac{d}{2} - \gamma}$. Letting 
$d_1 = d_1(d,\gamma) := 2 - \frac{d}{\gamma}$, and recalling the notation 
$\rho = r^{d/2}$, we can write $\phi_{Y^{(r)}}(t)$ in the form:
\eqnspl{e:Laplace-scaled-2}
{ \phi_{Y^{(r)}}(t)
  &= \exp \left( d_1 \, r^d \, \psi_0 
    \left( \frac{t}{r^{d/2}} \right) \right) 
  = \exp \left( d_1 \, \rho^2 \, \psi_0 
    \left( \frac{t}{\rho} \right) \right) \qquad \text{where} \\
  \psi_0(s)
  &:= \int_0^1 (e^{-su}-1-su) \, u^{-1-d/\gamma} \, du
  = \sum_{n=2}^\infty \frac{(-s)^n}{n! \, (n - \frac{d}{\gamma})}. }
We now collect some estimates for the characteristic function of $Y^{(r)}$.
From \eqref{e:Laplace-scaled-2} we have that
the Fourier transform of $Y^{(r)}$ is given by
\eqnspl{e:chi-Yr}
{ \chi_{Y^{(r)}}(t)
  &= \exp \left( - \frac{t^2}{2} + \sum_{n=3}^\infty 
    \kappa'_n(r) \frac{(it)^n}{n!} \right)
  = \exp \left( d_1 \, r^d \, \sum_{n=2}^\infty 
    \frac{(it/r^{d/2})^n}{(n - d/\gamma) \, n!} \right) \\
  &= \exp \left( d_1 \, r^d \, \int_0^1 \left( 
    e^{itu/r^{d/2}} - 1 - \frac{itu}{r^{d/2}} \right) \, u^{-1 - d/\gamma} \, du \right) 
  = \left( \chi_{Y^{(1)}}(t/r^{d/2}) \right)^{r^d}, }
where $\kappa'_n(r)$ are the cumulants of $Y^{(r)}$.

We will need estimates on $\chi_{Y^{(r)}}$ away from the real axis.

\begin{lemma}
\label{lem:chi-complex1} \ \\
(i) For any $\xi > 0$ we have
\eqnst
{ \left| \chi_{Y^{(r)}}(t - i \xi) \right|
  \le \phi_{Y^{(r)}}(-\xi) \, \left| \chi_{Y^{(r)}}(t) \right|. }
(ii) For any $\zeta > 0$ we have
\eqnst
{ \left| \chi_{Y^{(r)}}(t + i \zeta) \right|
  \le \phi_{Y^{(r)}}(\zeta) \, 
     \left| \chi_{Y^{(r)}}(t) \right|^{e^{-\zeta/r}}. }
(iii) For any $\xi, \zeta > 0$ we have
\eqnst
{ \lim_{|t| \to \infty} \, \sup_{-\zeta \le \mu \le \xi} \,
    \left| \chi_{Y^{(r)}}(t - i \mu) \right|
  = 0. }
\end{lemma}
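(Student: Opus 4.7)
My plan is to work directly with the L\'evy--Khintchine-type representation \eqref{e:chi-Yr} and reduce each of (i)--(iii) to an elementary pointwise inequality for the integrand. Writing $\rho = r^{d/2}$ and taking the real part of $\log \chi_{Y^{(r)}}$ along a horizontal line, a short computation yields, for every $\mu \in \R$,
\[
\log \bigl|\chi_{Y^{(r)}}(t - i\mu)\bigr|
  = d_1\, r^d \int_0^1 \bigl( e^{\mu u/\rho}\cos(tu/\rho) - 1 - \mu u/\rho \bigr)\, u^{-1-d/\gamma}\, du,
\]
and I observe that specializing $t = 0$ recovers $\log \phi_{Y^{(r)}}(-\mu)$, while specializing $\mu = 0$ recovers $\log|\chi_{Y^{(r)}}(t)|$.

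For~(i), setting $\mu = \xi > 0$ reduces the multiplicative claim to the integrated inequality $\int_0^1 (e^{\xi u/\rho} - 1)(\cos(tu/\rho) - 1)\, u^{-1-d/\gamma}\, du \le 0$, which is immediate from the pointwise bounds $e^{\xi u/\rho} - 1 \ge 0$ and $\cos(tu/\rho) - 1 \le 0$. For~(ii), setting $\mu = -\zeta < 0$ reduces the claim to
\[
\int_0^1 \bigl(e^{-\zeta u/\rho} - e^{-\zeta/r}\bigr)(\cos(tu/\rho) - 1)\, u^{-1-d/\gamma}\, du \le 0,
\]
which I would establish from the pointwise bound $e^{-\zeta u/\rho} \ge e^{-\zeta/r}$ (valid because $u/\rho \le 1/r$ on the relevant range of integration) combined with $\cos(tu/\rho) - 1 \le 0$. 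This is the single point at which the proofs of (i) and (ii) diverge, and it is where the exponent $e^{-\zeta/r}$ appearing in (ii) enters.

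For~(iii), combining (i) and (ii) gives, uniformly in $\mu \in [-\zeta, \xi]$,
\[
\bigl|\chi_{Y^{(r)}}(t - i\mu)\bigr|
  \le \max\bigl\{\phi_{Y^{(r)}}(-\xi),\, \phi_{Y^{(r)}}(\zeta)\bigr\}\, \bigl|\chi_{Y^{(r)}}(t)\bigr|^{e^{-\zeta/r}},
\]
so it suffices to show that $|\chi_{Y^{(r)}}(t)| \to 0$ as $|t| \to \infty$. Substituting $v = tu/\rho$ in the $\mu = 0$ case of the real-part formula gives
\[
\log \bigl|\chi_{Y^{(r)}}(t)\bigr|
  = d_1\, r^d\, (t/\rho)^{d/\gamma} \int_0^{t/\rho} (\cos v - 1)\, v^{-1-d/\gamma}\, dv,
\]
and since $d/\gamma \in (0,2)$ the inner integral converges as $t \to \infty$ to the strictly negative constant $\int_0^\infty (\cos v - 1)\, v^{-1-d/\gamma}\, dv$; hence $\log|\chi_{Y^{(r)}}(t)| \to -\infty$ at a polynomial rate, which yields~(iii).

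The whole argument amounts to careful bookkeeping of signs once the real-part identity is recorded; the one delicate point I foresee is isolating the correct pointwise bound in~(ii) so that the factor $e^{-\zeta/r}$ emerges on the entire range $u \in (0,1)$, and I do not anticipate a more serious obstacle.
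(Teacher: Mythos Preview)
Your approach is essentially identical to the paper's: both proofs compute $\Re\log\chi_{Y^{(r)}}(t-i\mu)$ from the integral representation, factor out $\log\phi_{Y^{(r)}}(-\mu)$, and bound the remaining integral $\int_0^1 e^{\mu u/\rho}(\cos(tu/\rho)-1)\,u^{-1-d/\gamma}\,du$ using the pointwise monotonicity of $e^{\mu u/\rho}$ in $u$. Your reduction to $(e^{\xi u/\rho}-1)(\cos-1)\le 0$ for (i) and $(e^{-\zeta u/\rho}-c)(\cos-1)\le 0$ for (ii) is just an algebraic rearrangement of the paper's $\min\{e^{\xi/\rho},1\}$ bound. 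For (iii) you supply an explicit substitution showing $|\chi_{Y^{(r)}}(t)|\to 0$, which the paper simply asserts; otherwise the arguments coincide.

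One small correction: in (ii) you justify $e^{-\zeta u/\rho}\ge e^{-\zeta/r}$ by the inequality $u/\rho\le 1/r$ on $(0,1)$, but this requires $\rho=r^{d/2}\ge r$, i.e.\ $d\ge 2$; it fails for $d=1$, $r>1$. What your computation (and the paper's proof) actually yields is $e^{-\zeta u/\rho}\ge e^{-\zeta/\rho}$, since $u\le 1$, giving the exponent $e^{-\zeta/\rho}$ rather than $e^{-\zeta/r}$. The exponent $e^{-\zeta/r}$ in the lemma statement appears to be a typo in the paper itself; its own proof produces $e^{-\zeta/r^{d/2}}$. This does not affect (iii), which only needs some strictly positive exponent.
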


\begin{proof} 
For any $\xi \in \R$ we have
\eqnspl{e:bound-Re-log}
{ &\Re \log \chi_{Y^{(r)}}(t - i \xi) \\
  &\quad = d_1 \, r^d \, \Re \int_0^1
    \left( e^{(itu + \xi u)/r^{d/2}} - 1 - \frac{itu + \xi u}{r^{d/2}} \right)
    u^{-1-d/\gamma} \, du \\
  &\quad = d_1 \, r^d \, \int_0^1
    e^{\xi u/r^{d/2}} \left( \cos \left(\frac{tu}{r^{d/2}}\right) - 1 \right) 
    u^{-1-d/\gamma} \, du 
    + \int_0^1 \left( e^{\xi u / r^{d/2}} 
    - \frac{\xi u}{r^{d/2}} - 1 \right) u^{1-d/\gamma} \, du \\
  &\quad \le \min \left\{ e^{\xi/r^{d/2}}, 1 \right\} \, 
    d_1 \, r^d \, \int_0^1
    \left( \cos \left(\frac{tu}{r^{d/2}}\right) - 1 \right) \, u^{-1-d/\gamma} \, du 
    + \log \phi_{Y^{(r)}}(-\xi) \\
  &\quad = \min \left\{ e^{\xi/r^{d/2}}, 1 \right\} \, 
    \Re \log \chi_{Y^{(r)}}(t) + \log \phi_{Y^{(r)}}(-\xi). }
When $\xi > 0$, we obtain (i) immediately, and statement (ii) follows 
by taking $\xi = - \zeta$.
Statement (iii) follows from (i), (ii) and the 
facts that $\lim_{|t| \to \infty} | \chi_{Y^{(r)}}(t) | = 0$,
and $\sup_{-\zeta \le \mu \le \xi} \phi_{Y^{(r)}}(-\mu) < \infty$.
\end{proof}

In order to approximate $f_{Y^{(r)}}(y)$ for a given $y$, we 
consider $\tY^{(r)}$ introduced in \eqref{e:tYr-def}.
Recall the expression: 
\eqnsplst
{ 
  f_{\tY^{(r)}}(z)
  &= \frac{e^{\xi(y,r) z}}{\phi_{Y^{(r)}}(-\xi(y,r))} f_{Y^{(r)}}(z), \quad
     z \in \R. }
The parameter $\xi = \xi(y,r)$ is the solution to the equation
\eqn{e:sol-y-2}
{ \frac{y}{\rho} 
  = - d_1 \, \psi_0'(-\xi/\rho) 
  = d_1 \, \sum_{n=2}^\infty 
    \frac{( \xi / \rho )^{n-1}}{(n-1)! \, (n - \frac{d}{\gamma})}. }
From the definition of $\tY^{(r)}$, we have that the mean and 
higher order cumulants of $\tY^{(r)}$ are 
$y$ and $\tkappa_n/\rho^{n-2}$, where
\eqnspl{e:tkappa_k}
{ \E [ \tY^{(r)} ] = y \qquad \text{ and } \qquad
  \tkappa_n
  := d_1 \, \int_0^1 e^{\xi u / r^{d/2}} \, u^{n - 1 - d/\gamma} \, du, \quad n \ge 2, }

We have the following relationship between $\chi_{Y^{(r)}}$ and $\chi_{\tY^{(r)}}$:
\eqn{e:chis}
{ \chi_{Y^{(r)}}(t-i\xi)
  = \phi_{Y^{(r)}}(-\xi) \int_{-\infty}^\infty e^{itz} 
    \frac{e^{\xi z}}{\phi_{Y^{(r)}}(-\xi)} f_{Y^{(r)}}(z) \, dz
  = \phi_{Y^{(r)}}(-\xi) \chi_{\tY^{(r)}}(t). }
From this we obtain
\eqnspl{e:change-path}
{ f_{Y^{(r)}}(y)
  &= \frac{1}{2 \pi} \int_{-\infty}^\infty e^{-ity} \, \chi_{Y^{(r)}}(t) \, dt
  = \frac{1}{2 \pi} \int_{-\infty}^\infty e^{-ity - \xi y} \, 
    \chi_{Y^{(r)}}(t-i\xi) \, dt \\
  &= \frac{e^{-\xi y} \phi_{Y^{(r)}}(-\xi)}{2 \pi} 
    \int_{-\infty}^\infty e^{-ity} \, 
    \chi_{\tY^{(r)}}(t) \, dt. }
In the second step we moved the path of integration, which is justified by
Lemma \ref{lem:chi-complex1}(iii). 
Our goal will be to estimate the expression
\eqnspl{e:tchi-expression-1}
{ e^{-ity} \, \chi_{\tY^{(r)}}(t)
  = \exp \left( d_1 \, \rho^2 \, \int_0^1 e^{\xi u / \rho}
    \left( e^{itu/\rho} - 1 - \frac{itu}{\rho} \right) \, 
    u^{-1-d/\gamma} \, du \right). }
Recall $\tkappa_k$ defined in \eqref{e:tkappa_k}:
\eqn{e:tkappa-formula}
{ \tkappa_k
  = d_1 \, \int_0^1 e^{\xi u / \rho} \, u^{k-1-d/\gamma} \, du, 
    \quad k \ge 2. }
It is clear from this formula that with $d_3 = d_1 / (1 - \frac{d}{\gamma})$,
for all $-d_3 \, \rho < y < \infty$ we have 
$\tkappa_2 \ge \tkappa_3 \ge \dots > 0$.

\begin{remark}
\label{rem:numerical}
Let us comment on how the approximation in Theorem \ref{thm:main}
can be computed numerically. The functional relationship between 
$y/\rho$ and $\xi/\rho$ does not depend on $r$, and is found to be
\eqnst
{ \frac{y}{\rho}
  = d_1 \, \int_0^1 \left( e^{\xi u / \rho} - 1 \right) \, u^{-d/\gamma}. }
The right hand side can be written in terms of the regularized 
incomplete gamma function, for which efficient numerical evaluation is available
both for positive arguments (when $\xi < 0$, equivalently, $y < 0$) 
\cite{G79}; and for negative argumments (when $\xi > 0$, equivalently, $y > 0$)
\cite{GR-AST16}.
Thus the increasing convex function $(\xi/\rho) \mapsto (y/\rho)$ can be 
easily inverted using Newton's method. The expression $e^{- \xi y} \, \phi_{Y^{(r)}}(-\xi)$
can be written in the form $\exp ( \rho^2 \, h(\xi/\rho, y/\rho) )$
with an explicit function $h$. The number $\tkappa_2$ as well as all $\tkappa_n$'s, 
are again given in terms of incomplete gamma functions.
\end{remark}

As mentioned earlier, for large $r$, the order of the approximation can be improved, if we
replace the normal approximation by an Edgeworth expansion \cite[Section XVI]{FII},
and we are now ready to define the quantities appearing in Theorem \ref{thm:main}.
A reader not familiar with the expansion should note that it is possible to
follow our proof of Theorem \ref{thm:main} without prior exposure, as it is
self-contained. On the other hand, the somewhat complicated expressions
one needs to define (see \eqref{e:tcjl-def} and \eqref{e:tnfk-def} below) may become
more transparent upon reading \cite[Section XVI]{FII}.

Recall that $\nf$ denotes the standard normal density, and for $k = 0$ define
\eqnst
{ \tnf_0
  := \frac{1}{\sqrt{\tkappa_2}} \, \nf(0). }
For $k \ge 1$, and $j \ge 1$, $1 \le \ell \le j$ define
\eqn{e:tcjl-def}
{ \tc_{j,\ell}
  = \frac{1}{\ell!} \!\!\!\!\! 
     \sum_{\substack{3 \le n_1, \dots, n_\ell \le k+2 \\
     (n_1-2) + \dots + (n_\ell-2) = j}} \frac{\tkappa_{n_1}}{n_1!} 
     \cdots \frac{\tkappa_{n_\ell}}{n_\ell!}. }
Let 
\eqnspl{e:tnfk-def}
{ \tnf_k
  := \frac{1}{\sqrt{\tkappa_2}} \nf(0) 
     \left[ 1 + \sum_{j = 1}^k \frac{(-1)^j}{(r^{d/2})^j} \, \sum_{\ell=1}^j \, 
     \frac{\tc_{j,\ell}}{\tkappa_2^{\frac{j}{2} + \ell}} \, 
     H_{j+2\ell}(0) \right], \quad k \ge 1,}
where $H_n(z)$, $n \ge 0$, denotes the $n$-th Hermite polynomial, that is,
$(-1)^n H_n(z) \nf(z)$ has Fourier transform $(it)^n \, e^{-t^2/2}$.
Observe that in computing $\tnf_k(0)$, all terms with odd $j$ vanish.
In particular, $\tnf_{2k+1} = \tnf_{2k}$ for $k = 0, 1, \dots$.

We close this section with collecting a few estimates that we will often use.
We have \cite[Eqn.~XVI.(2.8)]{FII}:
\eqn{e:exp-alpha-beta}
{ \left| e^{\alpha} - e^{\beta} \right|
  \le e^{-x} \left| \alpha - \beta \right| \, 
      \exp \left( \max \{ |\alpha+x|,\, |\beta+x| \} \right), \quad
      \alpha, \beta \in \C,\, x \in \R. }
The following estimate is standard for the tail of the 
normal distribution (see for example \cite[Theorem (1.4)]{Durrett}):
\eqn{e:normal-tail-est}
{ \int_x^\infty \exp ( -t^2 a/2 ) \, dt
  \le \frac{1}{a \, x} \, \exp ( - x^2 a /2 ). }
We will use the following Gaussian absolute moments:
\eqn{e:Gauss-moments}
{ \int_{-\infty}^\infty |t|^k \, \exp ( - t^2 a / 2) \, dt
  = \left(\frac{2}{a}\right)^{\frac{k+1}{2}} \, 
    \Gamma \left( \frac{k+1}{2} \right). }

\section{Estimates of the transformed distribution}
\label{sec:both-tails}

In this section we prove Theorem \ref{thm:main}. 
The arguments for $k = 0$ (normal approximation) are similar and simpler than 
for $k \ge 1$. Therefore, we only give the details when $k \ge 1$.

%

We will often use Stirling's formula \cite[Section II.9]{FI} in the form:
\eqn{e:Stirling}
{ \sqrt{2 \pi N} N^N e^{-N}
  \le \Gamma(N + 1)
  \le \sqrt{2 \pi N} N^N e^{-N + \frac{1}{12 N}}, \qquad N \ge 1. }

\subsection{Upper tail}
\label{sssec:upper-tail-ew}

In this section we prove Theorem \ref{thm:main} in the case $y \ge 0$.
It follows then from the definition of $\xi = \xi(y,r)$ that 
$0 \le \xi(y,r) < \infty$, and from \eqref{e:tkappa-formula}
that $\tkappa_2 \ge 1$. Recall that $\rho = r^{d/2}$. 
Recall the formula \eqref{e:change-path} 
and that our goal is to approximate
\eqnspl{e:tchi-expression-ew}
{ e^{-ity} \, \chi_{\tY^{(r)}}(t)
  = \exp \left( d_1 \, \rho^2 \, \int_0^1 e^{\xi u / \rho}
    \left( e^{itu/\rho} - 1 - \frac{itu}{\rho} \right) \, u^{-1-d/\gamma} \, du \right)
  =: e^{u(t)}. }
We define
\begin{align}
\label{e:qkrk-etc-ew}
  {q}_k(t)
  &= \rho^2 \, \sum_{n=3}^{k+2} \frac{(it/\rho)^n \, \tkappa_n}{n!} & 
  {r}_k(t)
  &= \rho^2 \, \sum_{n=k+3}^{\infty} \frac{(it/\rho)^n \, \tkappa_n}{n!} \\
  {u}(t)
  &= -\frac{t^2}{2} \tkappa_2 + {q}_k(t) + {r}_k(t) & 
  {\widetilde{\chi}}_k(t)
  &:= \exp \left( - \frac{t^2}{2} \tkappa_2 + {q}_k(t) \right). 
\end{align}
We give separate estimates for $|t| \le \rho / 2$ and 
$|t| > \rho / 2$. For $|t| \le \rho / 2$, 
we estimate $e^{-ity} \, \chi_{\tY^{(r)}}(t) = e^{{u}(t)}$ first by 
${\tchi}_k(t) = \exp \left( - \frac{t^2}{2} \, \tkappa_2 + {q}_k(t) \right)$,
then by the Taylor expansion of $e^{{q}_k(t)}$. Keeping terms of order
$\rho^{-k}$ and lower yields the Edgeworth expansion. 
The above steps are carried out in a series of lemmas.

\begin{lemma}
\label{lem:truncate-ut-both}
There exists a constant $C$ such that for all $r \ge 1$ and
$k \ge 1$ we have
\eqnsplst
{ \int_{- \rho / 2}^{\rho / 2}
    \left| e^{{u}(t)} - {\tchi}_k(t) \right| \, dt
  \le \frac{1}{\sqrt{\tkappa_2}} \, 
    \frac{C^k \, k^{-\frac{k}{2}}}{(\sqrt{\tkappa_2} \, \rho)^{k+1}}. }
\end{lemma}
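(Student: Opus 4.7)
The plan is to apply inequality \eqref{e:exp-alpha-beta} with $\alpha = u(t)$, $\beta = -\frac{t^2}{2}\tkappa_2 + q_k(t)$ (so that $\alpha - \beta = r_k(t)$), and with the real shift $x = \frac{t^2}{2}\tkappa_2$. This yields
$$|e^{u(t)} - \tchi_k(t)| \le e^{-\tkappa_2 t^2/2}\,|r_k(t)|\,\exp\!\left(\max\{|q_k(t) + r_k(t)|,\,|q_k(t)|\}\right),$$
reducing the task to a Gaussian-weighted estimate of $|r_k(t)|$, provided the exponential in the maximum can be dominated by a harmless fraction of $e^{\tkappa_2 t^2/2}$.

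The first substantive step is to show that, for $|t| \le \rho/2$, both $|q_k(t)|$ and $|q_k(t) + r_k(t)|$ are at most $\tkappa_2 t^2/4$. Using the monotonicity $\tkappa_n \le \tkappa_2$ for $n \ge 2$ (which, for $y \ge 0$, is visible from \eqref{e:tkappa-formula} since $u^{n-1-d/\gamma}$ decreases in $n$ on $(0,1)$), one has
$$|q_k(t) + r_k(t)| \le \tkappa_2\,\rho^2 \sum_{n=3}^\infty \frac{|t/\rho|^n}{n!} \le \frac{\tkappa_2\,|t|^3\,e^{|t/\rho|}}{6\rho},$$
which on $|t/\rho| \le 1/2$ is bounded by $\tkappa_2 t^2/4$ (since $e^{1/2}/12 < 1/4$). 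The same bound holds for $|q_k(t)|$ alone, so $|e^{u(t)} - \tchi_k(t)| \le |r_k(t)|\,e^{-\tkappa_2 t^2/4}$ on the whole integration range.

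Next, I would extract a factorial factor from the remainder: for $|t/\rho| \le 1/2$,
$$|r_k(t)| \le \tkappa_2\,\rho^2 \sum_{n=k+3}^\infty \frac{|t/\rho|^n}{n!} \le \frac{e^{1/2}\,\tkappa_2\,|t|^{k+3}}{(k+3)!\,\rho^{k+1}}.$$
Substituting, enlarging the domain of integration to all of $\R$, and invoking the Gaussian moment formula \eqref{e:Gauss-moments} with $a = \tkappa_2/2$ gives
$$\int_{-\rho/2}^{\rho/2}|e^{u(t)} - \tchi_k(t)|\,dt \le \frac{C\,\tkappa_2}{(k+3)!\,\rho^{k+1}}\cdot\frac{8^{(k+4)/2}\,\Gamma((k+4)/2)}{\tkappa_2^{(k+4)/2}}.$$

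Finally, I would simplify using Stirling's formula \eqref{e:Stirling} applied to both $(k+3)! = \Gamma(k+4)$ and $\Gamma((k+4)/2)$: a short calculation shows that $\Gamma((k+4)/2)/(k+3)!$ is of order $(e/2)^{k/2}\,k^{-k/2}$ up to polynomial factors, which absorbs into the $8^{(k+4)/2}$ factor to produce a clean $C^k k^{-k/2}$, while the remaining powers of $\tkappa_2$ combine to $\tkappa_2^{-(k+2)/2} = \tkappa_2^{-1/2}(\sqrt{\tkappa_2}\,\rho)^{-(k+1)}\rho^{k+1}$, exactly matching the stated form. The main technical obstacle is precisely this Stirling bookkeeping: the $\Gamma((k+4)/2)$ numerator grows like $(k/2)^{k/2}$ and only partially cancels the $k^k$-decay of $(k+3)!$, so extracting the sharp $k^{-k/2}$ factor requires careful tracking of the sub-exponential corrections. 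The splitting at $\rho/2$ is innocuous because $|t/\rho| \le 1/2$ is precisely the regime in which the Taylor comparisons used above apply uniformly.
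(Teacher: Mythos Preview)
Your proposal is correct and follows essentially the same route as the paper: apply \eqref{e:exp-alpha-beta} with the shift $x=\tkappa_2 t^2/2$, bound the max of $|q_k|$ and $|q_k+r_k|$ by $\tkappa_2 t^2/4$ via the monotonicity $\tkappa_n\le\tkappa_2$, control $|r_k(t)|$ by a Taylor remainder of order $|t|^{k+3}/(\rho^{k+1}(k+3)!)$, and finish with the Gaussian moment formula \eqref{e:Gauss-moments} plus Stirling. The only cosmetic differences are that the paper swaps the roles of $\alpha$ and $\beta$ and first writes the remainder bound in terms of $\tkappa_{k+3}$ before invoking $\tkappa_{k+3}\le\tkappa_2$.
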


\begin{proof}
Let $\alpha(t) = -\frac{t^2 \, \tkappa_2}{2} + q_k(t)$, $\beta(t) = u(t)$ 
and $x(t) = \frac{t^2 \, \tkappa_2}{2}$.
For all $t \in \R$, $0 \le u \le 1$, $r \ge 1$ and $k \ge 1$ we have
\eqnsplst
{ \left| \left( e^{itu/\rho} - 1 - \frac{itu}{\rho} \right) 
    - \left( \sum_{n=2}^{k+2} \frac{(itu)^n}{\rho^{n} \, n!} \right) \right|
  \le \frac{|t|^{k+3} \, u^{k+3}}{\rho^{k+3} \, (k+3)!}. }
Multiplying by $d_1 \, \rho^2 \, e^{\xi u / \rho} u^{-1-d/\gamma}$, integrating 
over $0 \le u \le 1$, and using Stirling's formula yields that  
for $|t| \le \rho / 2$, we have
\eqnsplst
{ \left| \beta(t) - \alpha(t) \right|
  \le t^2 \, \frac{|t|^{k+1}}{\rho^{k+1} \, (k+3)!} \, \tkappa_{k+3}. }
We also have
\eqnspl{e:max-qk-rk}
{ &\max \{ |\alpha(t) + x(t)|,\, |\beta(t) + x(t)| \}
  = \max \{ |q_k(t)|,\, |q_k(t) + r_k(t)| \}
  \le \rho^2 \, \sum_{n=3}^\infty \frac{|t|^n \, \tkappa_n}{\rho^{n} \, n!} \\
  &\qquad \le \frac{|t|^3}{\rho} \, \tkappa_3 \, \sum_{n=3}^\infty 
     \frac{|t|^{n-3}}{\rho^{n-3} \, (n-3)! \, 6}
  \le \frac{t^2}{2} \, \tkappa_2 \, \frac{|t|}{3 \, \rho} \, e^{|t|/\rho}
  \le \frac{t^2}{4} \, \tkappa_2. }
Using \eqref{e:exp-alpha-beta}  
and then \eqref{e:Gauss-moments} and Stirling's formula \eqref{e:Stirling}
yields:
\eqnsplst
{ \int_{- \rho / 2}^{\rho / 2}
     \left| \chi_{\tY^{(r)}}(t) - {\tchi}_k(t) \right| \, dt 
  &\le \frac{\tkappa_{k+3}}{\rho^{k+1} \, (k+3)!} \,
     \int_{-\infty}^\infty |t|^{k+3} \, e^{-t^2 \tkappa_2 /4} \, dt 
  \le \frac{C^k \, k^{-k/2}}{\rho^{k+1} \, \tkappa_2^{\frac{k}{2} + 1}}. }
\end{proof}

Consider now 
${\tchi}_k(t) = \exp ( - \frac{t^2}{2} \tkappa_2 + {q}_k(t) )$.
Expand $e^{{q}_k(t)}$ using the exponential power series, 
and collect terms according to inverse powers of $\rho$. This yields unique polynomials 
${p}_1, {p}_2, \dots$ with real coefficients, such that 
\eqn{e:tchik-both}
{ {\widetilde{\chi}}_k(t)
  = e^{-t^2 \tkappa_2 /2} \, \left[ 1 + \sum_{j=1}^k \frac{{p}_j(it)}{\rho^{j}} 
    + {\tilde{r}}_k(t) \right]
  = \chi_k(t) + e^{-t^2 \tkappa_2 /2} {\tilde{r}}_k(t), }
where ${\tilde{r}}_k$ collects all the terms of order 
$\rho^{-(k+1)}$ and higher, and $\chi_k(t)$ is defined by the second equality.
More explicitly, recalling \eqref{e:tcjl-def}, we have
\begin{align*}
  {p}_j(it)
  &= \sum_{\ell=0}^j \tc_{j,\ell} \, (it)^{j+2\ell}.
\end{align*}
Recall that the main term for the approximation is
$\tilde{\nf}_k = \frac{1}{2 \pi} \int_{-\infty}^\infty \chi_k(t) \, dt$.
The following simple upper bound will be useful.

\begin{lemma}
\label{lem:tcjl}
We have
\eqnst
{ \tc_{j,\ell}
  \le \frac{\left( \tkappa_3 \right)^\ell \, \ell^{j}}{\ell! \, j!}. }
\end{lemma}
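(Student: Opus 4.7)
The plan is to apply two elementary simplifications to the sum defining $\tc_{j,\ell}$. First, I would use the monotonicity $\tkappa_3 \ge \tkappa_4 \ge \cdots$ (which was observed right after \eqref{e:tkappa-formula}, since $e^{\xi u/\rho} u^{-1-d/\gamma}$ is positive and $u^{n-1-d/\gamma}$ is decreasing in $n$ on $(0,1)$) to replace each $\tkappa_{n_i}$ by $\tkappa_3$. This pulls a factor of $(\tkappa_3)^\ell$ out of the sum, leaving
\begin{equation*}
\tc_{j,\ell} \;\le\; \frac{(\tkappa_3)^\ell}{\ell!}
\sum_{\substack{n_1,\dots,n_\ell \ge 3 \\ (n_1-2)+\dots+(n_\ell-2)=j}}
\frac{1}{n_1!\cdots n_\ell!}.
\end{equation*}

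The second step is to reindex by $m_i := n_i - 2 \ge 1$ with $m_1 + \dots + m_\ell = j$, and use the crude bound $(m_i+2)! \ge m_i!$ to get the sum bounded by
\begin{equation*}
\sum_{\substack{m_1,\dots,m_\ell \ge 1 \\ m_1+\dots+m_\ell=j}}
\frac{1}{m_1!\cdots m_\ell!}
\;=\; \frac{1}{j!}
\sum_{\substack{m_1,\dots,m_\ell \ge 1 \\ m_1+\dots+m_\ell=j}}
\binom{j}{m_1,\dots,m_\ell}.
\end{equation*}
By the multinomial theorem, the sum of multinomial coefficients over all compositions of $j$ into $\ell$ positive parts is at most $\sum_{m_1,\dots,m_\ell \ge 0,\sum m_i=j} \binom{j}{m_1,\dots,m_\ell} = \ell^j$. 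Combining the two steps yields the claimed bound $\tc_{j,\ell} \le (\tkappa_3)^\ell \ell^j/(\ell!\, j!)$.

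This argument is entirely combinatorial and essentially routine once the monotonicity of the $\tkappa_n$'s is recalled; I do not anticipate any real obstacle. The only mildly delicate point is making sure the bound $(m_i+2)! \ge m_i!$ is not too wasteful for the downstream applications, but since the lemma is stated as is, this factor is clearly acceptable. Note also that the upper limit $n_i \le k+2$ in the defining sum is simply dropped when we pass to the unrestricted multinomial bound, which is fine as all terms are nonnegative.
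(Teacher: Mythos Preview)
Your proof is correct and follows essentially the same route as the paper's own proof: bound each $\tkappa_{n_i}$ by $\tkappa_3$ using monotonicity, replace $n_i!=(m_i+2)!$ by $m_i!$, and then bound the resulting multinomial sum by $\ell^j$. The only cosmetic difference is that the paper keeps the restriction $1\le m_i\le k$ until the final step before invoking the multinomial bound, whereas you drop it one line earlier; this is immaterial since all terms are nonnegative.
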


\begin{proof}
Recalling the definition from \eqref{e:tcjl-def} we have
\eqnspl{e:tcjl-bound}
{ \tc_{j,\ell}
  &= \frac{1}{\ell!}
    \sum_{\substack{3 \le n_1, \dots, n_\ell \le k+2 : \\
    (n_1 - 2) + \dots + (n_\ell - 2) = j}} 
    \frac{\tkappa_{n_1} \, \cdots \, \tkappa_{n_\ell}}{n_1! \cdots n_\ell!} 
  \le \frac{\left( \tkappa_3 \right)^\ell}{\ell! \, j!} 
    \sum_{\substack{1 \le m_1, \dots, m_\ell \le k : \\
    m_1 + \dots + m_\ell = j}} 
    \frac{j!}{m_1! \cdots m_\ell!} 
 \le \frac{\left( \tkappa_3 \right)^\ell \, \ell^{j}}{\ell! \, j!}. }
\end{proof}

We now find an estimate for the error resulting from
omitting ${\tilde{r}}_k(t)$ in \eqref{e:tchik-both}. We do this in 
two steps, summarized in the following two lemmas.

\begin{lemma}
\label{lem:truncate-exp-both}
There exists a constant $C$ such that for all $r \ge 1$ and
$k \ge 1$ we have
\eqn{e:exp-diff-both}
{ \int_{-\rho/2}^{\rho/2} e^{-t^2 \tkappa_2 / 2} \, \left| e^{{q}_k(t)} 
    - \sum_{\ell=0}^k \frac{{q}_k(t)^\ell}{\ell!} \right| \, dt
  \le \frac{C^k \, k^{k/2}}{\sqrt{\tkappa_2} \, (\sqrt{\tkappa_2} \rho)^{k+1}}. }
\end{lemma}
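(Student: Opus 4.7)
The plan is to bound the exponential-series truncation error pointwise and then integrate against the Gaussian weight. A standard sum-of-tail estimate gives
\[ \left| e^{{q}_k(t)} - \sum_{\ell=0}^{k} \frac{{q}_k(t)^\ell}{\ell!} \right|
    \le \frac{|{q}_k(t)|^{k+1}}{(k+1)!} \, e^{|{q}_k(t)|}, \]
so the task reduces to controlling the two factors on $|t| \le \rho/2$.

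For the exponential factor $e^{|{q}_k(t)|}$, I would reuse the bound $|{q}_k(t)| \le t^2 \tkappa_2/4$ already derived in the proof of Lemma \ref{lem:truncate-ut-both} (see \eqref{e:max-qk-rk}); this absorbs $e^{|{q}_k(t)|}$ into the Gaussian, leaving a weight $e^{-t^2 \tkappa_2/4}$. For the polynomial factor, the crucial point is to extract $\rho^{-(k+1)}$ from $|{q}_k(t)|^{k+1}$. Using the monotonicity $\tkappa_n \le \tkappa_3$ for $n \ge 3$ established after \eqref{e:tkappa-formula}, and factoring out the leading $n=3$ term, one obtains
\[ |{q}_k(t)|
    \le \tkappa_3 \sum_{n=3}^{\infty} \frac{|t|^n}{\rho^{n-2} \, n!}
    \le \frac{\tkappa_3 \, |t|^3}{6 \, \rho} \, e^{|t|/\rho}
    \le \frac{\tkappa_3 \, |t|^3}{3 \, \rho} \quad \text{for } |t| \le \rho/2, \]
so $|{q}_k(t)|^{k+1}$ supplies the desired $\rho^{-(k+1)}$ together with $\tkappa_3^{k+1} |t|^{3(k+1)}$.

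Plugging these estimates back in and extending the range of integration to all of $\R$, the remaining integral is a Gaussian absolute moment that can be computed via \eqref{e:Gauss-moments} with $a = \tkappa_2 / 2$:
\[ \int_{-\infty}^{\infty} |t|^{3(k+1)} \, e^{-t^2 \tkappa_2/4} \, dt
    = \left(\frac{4}{\tkappa_2}\right)^{(3k+4)/2} \Gamma\!\left(\frac{3k+4}{2}\right). \]
Stirling's formula \eqref{e:Stirling} then allows me to compare $\Gamma((3k+4)/2)$ with $(k+1)!$: after simplification, the ratio contributes $k^{k/2}$ together with an exponential $C^k$. The powers of $\tkappa_2$ combine through $\tkappa_3 \le \tkappa_2$ into $\tkappa_2^{-(k+2)/2} = 1/\bigl(\sqrt{\tkappa_2}\,(\sqrt{\tkappa_2})^{k+1}\bigr)$, producing exactly the stated bound.

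The main obstacle is purely bookkeeping: tracking the exponential-type constants through the Stirling comparison of $\Gamma((3k+4)/2)/(k+1)!$ so that the $k$-dependence reduces to $k^{k/2}$ rather than $k^{3k/2}$. The geometric tail bound on $q_k$ is what makes this work — without factoring out the leading $n=3$ term and using $e^{|t|/\rho} \le e^{1/2}$ on $|t| \le \rho/2$, the tail of the sum defining $q_k$ would contribute an extra factorial factor that destroys the targeted exponent.
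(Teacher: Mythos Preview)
Your proposal is correct and follows essentially the same route as the paper's proof: bound the truncation error by $\frac{|q_k(t)|^{k+1}}{(k+1)!} e^{|q_k(t)|}$, use the estimate $|q_k(t)| \le \frac{|t|^3 e^{|t|/\rho}}{6\rho}\tkappa_2 \le \frac{t^2}{4}\tkappa_2$ from \eqref{e:max-qk-rk} (the paper writes $\tkappa_2$ directly rather than going through $\tkappa_3 \le \tkappa_2$, but this is cosmetic), integrate the Gaussian moment via \eqref{e:Gauss-moments}, and cancel the $k^{-k}$ from $(k+1)!$ against the $k^{3k/2}$ from $\Gamma(\tfrac{3k}{2}+2)$ via Stirling.
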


\begin{proof}
Recalling \eqref{e:max-qk-rk} we have
\eqnst
{ \left| {q}_k(t) \right|
  \le \frac{|t|^3 \, e^{|t|/\rho}}{6 \, \rho} \, \tkappa_2 
  \le \frac{t^2}{4} \, \tkappa_2. }
Hence the expression inside absolute values in the left hand 
side of \eqref{e:exp-diff-both} is at most
\eqnspl{e:qkt-bnd-both}
{ \frac{\left| {q}_k(t) \right|^{k+1}}{(k+1)!} \, e^{|{q}_k(t)|}
  &\le \frac{\tkappa_2^{k+1}}{\rho^{k+1}} \, \frac{|t|^{3(k+1)}}{(k+1)!} \, 
      \left( \frac{e^{1/2}}{6} \right)^{k+1} \, e^{t^2 \tkappa_2 / 4} 
  \le \frac{C^k \, \tkappa_2^{k+1}}{\rho^{k+1}} \, \frac{1}{k^k} \, 
      |t|^{3(k+1)} \, e^{t^2 \tkappa_2 / 4}. }
Using \eqref{e:Gauss-moments} and Stirling's formula, we have
\eqnst
{ \int_{-\rho/2}^{\rho/2} |t|^{3(k+1)} e^{-t^2 \tkappa_2 / 4} \, dt
  \le \left( \frac{4}{\tkappa_2} \right)^{\frac{3}{2} k + 2} \, 
      \Gamma \left( \frac{3}{2} k + 2 \right)
  \le C^k \, k^{3k/2} \, \tkappa_2^{-\frac{3}{2}k-2}. }
Taking into account the additional terms in the right hand side
of \eqref{e:qkt-bnd-both}, this implies that the left hand side 
of \eqref{e:exp-diff-both} is bounded by the expression claimed 
in the Lemma.
\end{proof}

\begin{lemma}
\label{lem:upto-order-k-both}
(i) There exists a constant $C$ such that for any 
$k \ge 1$ and $r \ge 1$ we have
\eqn{e:ser-diff}
{ \int_{-\rho/2}^{\rho/2} e^{-\frac{t^2}{2} \tkappa_2} \, 
    \left| 1 + \sum_{j=1}^k \frac{{p}_j(it)}{\rho^{j}} 
    - \sum_{\ell=0}^k \frac{{q}_k(t)^\ell}{\ell!} \right|
  \le \frac{C^k}{\sqrt{\tkappa_2}} \, 
    \sum_{j=k+1}^{k^2} \frac{k^{j/2}}{(\sqrt{\tkappa_2} \rho)^j}. }
(ii) When $\rho \ge \sqrt{k}$, the right hand side of \eqref{e:ser-diff}
is at most $\frac{C^k \, k^{k/2}}{\sqrt{\tkappa_2} (\sqrt{\tkappa_2} \rho)^{k+1}}$.
\end{lemma}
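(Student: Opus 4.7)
My plan is to identify the difference in \eqref{e:ser-diff} explicitly and then estimate it term by term via Gaussian-moment bounds. By construction, $1+\sum_{j=1}^k p_j(it)/\rho^j$ is the partial sum through order $\rho^{-k}$ of the formal expansion of $e^{q_k(t)}=\sum_{\ell\ge 0} q_k(t)^\ell/\ell!$ in inverse powers of $\rho$, while $\sum_{\ell=0}^k q_k(t)^\ell/\ell!$ is the partial sum truncated at $\ell\le k$. Expanding $q_k(t)^\ell$ via the multinomial theorem, each monomial carries a $\rho^{-j}$-exponent $j=\sum_{i=1}^\ell(n_i-2)$ with $3\le n_i\le k+2$, so $j\in[\ell,k\ell]\subseteq[1,k^2]$; any term with $j\le k$ automatically satisfies $\ell\le j\le k$ and $n_i-2\le j\le k$, so the two truncations agree through order $\rho^{-k}$. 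Their difference thus consists exactly of the $\rho^{-j}$-monomials with $j\in\{k+1,\dots,k^2\}$ from $\sum_{\ell\le k}q_k^\ell/\ell!$, and since the $n_i\le k+2$ restriction gives a subset of the configurations defining $\tc_{j,\ell}$,
\eqnst{
\Bigl|1+\sum_{j=1}^k\frac{p_j(it)}{\rho^j}-\sum_{\ell=0}^k\frac{q_k(t)^\ell}{\ell!}\Bigr|\le\sum_{j=k+1}^{k^2}\sum_{\ell=1}^{\min(j,k)}\tc_{j,\ell}\,\frac{|t|^{j+2\ell}}{\rho^j}.
}

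For part (i), I integrate term-by-term. Applying \eqref{e:Gauss-moments} together with Stirling \eqref{e:Stirling} gives $\int_{-\infty}^\infty|t|^{j+2\ell}e^{-t^2\tkappa_2/2}\,dt\le C\sqrt{j+2\ell}\,((j+2\ell)/(e\tkappa_2))^{(j+2\ell)/2}/\sqrt{\tkappa_2}$. Combining with Lemma \ref{lem:tcjl}, $\tkappa_3\le\tkappa_2$, $\ell^j/\ell!\le e^\ell\ell^{j-\ell}$, and Stirling applied to $j!$, the $(j,\ell)$-contribution is controlled by a multiple of $\ell^{j-\ell}(j+2\ell)^{(j+2\ell)/2}/(j^je^{j/2}\sqrt{j}\,\tkappa_2^{(j+1)/2}\rho^j)$. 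To sum over $\ell\le\min(j,k)=k$ (since $j\ge k+1$), I use $(j+2\ell)^{(j+2\ell)/2}\le j^{j/2+\ell}e^{\ell+2\ell^2/j}$ (from $\log(1+2\ell/j)\le 2\ell/j$) and absorb $e^{2\ell^2/j}\le e^{2k}$ into a $C^k$ factor (valid since $\ell^2/j\le k^2/(k+1)<k$ throughout). The remaining sum $\sum_{\ell\le k}(ej/k)^\ell\ell^{j-\ell}$ is geometric-type and bounded by $k\cdot k^{j-k}(ej/k)^k$, using $\ell^{j-\ell}\le k^{j-\ell}$ and monotonicity in $\ell$. The $e^j$, $j^j$, and $k$-polynomial factors then cancel to give the per-$j$ bound $C^k k^{j/2}/(\tkappa_2^{(j+1)/2}\rho^j)$; summing over $j$ completes (i).

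Part (ii) is a direct computation: when $\rho\ge\sqrt k$ and $\tkappa_2\ge 1$ (the upper-tail regime of this section), the ratio of consecutive summands in $\sum_j k^{j/2}/(\sqrt{\tkappa_2}\rho)^j$ is at most $\sqrt k/(\sqrt{\tkappa_2}\rho)\le 1$, so the sum is dominated by $k^2$ times its leading term $k^{(k+1)/2}/(\sqrt{\tkappa_2}\rho)^{k+1}$, the polynomial-in-$k$ factor absorbing into $C^k$. The main technical obstacle in part (i) is the precise bookkeeping of the three $e^{\pm(\cdot)}$ factors—one from the Gaussian moment, one from Stirling on $j!$, and one from the log-convex bound on $(j+2\ell)^{(j+2\ell)/2}$—which must cancel exactly, together with verifying that $e^{2\ell^2/j}$ is tame throughout the range $1\le \ell\le k\le j-1\le k^2-1$ so that it can be absorbed into the $C^k$ prefactor.
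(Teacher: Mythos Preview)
Your overall strategy is exactly the paper's: both proofs identify the difference in \eqref{e:ser-diff} as the collection of monomials with $\rho$-exponent $j\in\{k+1,\dots,k^2\}$ coming from $\sum_{\ell\le k}q_k^\ell/\ell!$, then bound each via the Gaussian-moment formula \eqref{e:Gauss-moments} combined with the estimate of Lemma~\ref{lem:tcjl}, and finally control the double sum over $(j,\ell)$ by showing the $\ell=k$ term dominates and writing $j=xk$. Part~(ii) is handled identically.

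However, your bookkeeping in part~(i) contains two sign/factor slips. First, when you combine $\tc_{j,\ell}\le \tkappa_2^\ell e^\ell\ell^{j-\ell}/j!$, Stirling for $1/j!$, and your Gaussian-moment bound, the exponential factors are $e^{\ell+j}\cdot e^{-(j+2\ell)/2}=e^{j/2}$ in the \emph{numerator}, not the denominator; so the per-$(j,\ell)$ contribution is a multiple of $e^{j/2}\ell^{j-\ell}(j+2\ell)^{(j+2\ell)/2}/(j^j\sqrt{j}\,\tkappa_2^{(j+1)/2}\rho^j)$. Second, after substituting $(j+2\ell)^{(j+2\ell)/2}\le j^{j/2+\ell}e^{\ell+2\ell^2/j}$ the $\ell$-dependent part is $(ej)^\ell\ell^{j-\ell}$, not $(ej/k)^\ell\ell^{j-\ell}$; there is no $k^{-\ell}$ anywhere. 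With these two corrections the argument still closes: since the ratio of consecutive terms in $\sum_{\ell\le k}(ej)^\ell\ell^{j-\ell}$ exceeds $1$, the sum is at most $k(ej)^k k^{j-k}$, and writing $j=xk$ one checks that $e^{j/2}\cdot k(ej)^k k^{j-k}/j^{j/2}=k^{j/2}\cdot\bigl[e^{x/2}x^{1-x/2}\bigr]^k\cdot O(C^k)$, where the bracket is bounded uniformly for $x\ge 1$. This recovers the claimed per-$j$ bound $C^k k^{j/2}/(\tkappa_2^{(j+1)/2}\rho^j)$. The paper's route differs only cosmetically: it keeps $\Gamma(\tfrac{j}{2}+\ell+\tfrac{1}{2})$ and the factorials intact through the ratio test \eqref{e:ratio}, and applies Stirling only after isolating the $\ell=k$ term, which avoids tracking the three $e^{(\cdot)}$ factors separately.
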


\begin{proof}
(i) We first note that the statement is vacuous when $k = 1$, as the expression
inside absolute values vanishes then. Henceforth we assume $k \ge 2$.
The expression inside absolute values in the left hand side of \eqref{e:ser-diff} 
consists of those terms of $\sum_\ell {q}_k^\ell/\ell!$ where $\rho^{-1}$ occurs with a power 
at least $k+1$. These are 
\eqnspl{e:the-terms}
{ &\sum_{j=k+1}^{k^2} \frac{1}{\rho^j} \sum_{\ell = \lceil j/k \rceil}^{k}
    \widetilde{c}_{j,\ell} (it)^{j+2\ell}. }    
For fixed $j$ and $k$, we take absolute values in \eqref{e:the-terms}, and
integrate. Formula \eqref{e:Gauss-moments} gives
\eqnst
{ \int_{-\rho/2}^{\rho/2} e^{-\frac{t^2}{2} \tkappa_2} \, |t|^{j+2\ell} \, dt
  \le \left( \frac{2}{\tkappa_2} \right)^{\frac{j}{2} + \ell + \frac{1}{2}} \, 
      \Gamma \left(\frac{j}{2} + \ell + \frac{1}{2} \right). }
Using Lemma \ref{lem:tcjl} to bound 
$\widetilde{c}_{j,\ell}$, this yields the following bound on the left
hand side of \eqref{e:ser-diff}:
\eqnspl{e:sum-j-ell}
{ &\frac{1}{\sqrt{\tkappa_2}} \, 
    \sum_{j=k+1}^{k^2} \frac{1}{(\sqrt{\tkappa_2} \rho)^j} \, \sum_{\ell = \lceil j/k \rceil}^{k}
    \frac{\ell^{j} \, 
    2^{\frac{j}{2} + \ell + \frac{1}{2}} \,
    \Gamma(\frac{j}{2} + \ell + \frac{1}{2})}{\ell! \, j!}, }
Fix $k+1 \le j \le k^2$, and consider the ratio of the terms corresponding 
to $\ell+1$ and $\ell$ in the second sum.
This ratio is
\eqnspl{e:ratio}
{ \frac{(\ell+1)^j}{\ell^j} \, \frac{2 \, (\frac{j}{2} + \ell + \frac{1}{2})}{(\ell+1)}
  > 1. }    
Thus the sum over $\ell$ is bounded above by $k$ times the $\ell = k$ term. This gives 
\eqnspl{e:sum-ell-only}
{ &\sum_{\ell = \lceil j/k \rceil}^{k} \frac{\ell^{j} \, 2^{\frac{j}{2} + \ell + \frac{1}{2}} \,
    \Gamma(\frac{j}{2} + \ell + \frac{1}{2})}{\ell! \, j!}
  \le k \, \frac{k^{j} \, 2^{\frac{j}{2} + k + \frac{1}{2}} \,
    \Gamma(\frac{j}{2} + k + \frac{1}{2})}{k! \, j!} \\
  &\qquad \le C^k \, \frac{k^j \, 2^{\frac{j}{2} + k} \, 
    \left( \frac{j}{2} + k \right)^{\frac{j}{2} + k} \,
    e^{-\frac{j}{2}-k}}{k^k \, j^j \, e^{-j}}
  \le C^k \, \frac{k^j \, \left( j + 2k \right)^{\frac{j}{2} + k} \,
    e^{\frac{j}{2}}}{k^k \, j^j}. }
Writing $j = x k$, with $x > 1$, the right hand side of \eqref{e:sum-ell-only} is
\eqnspl{e:sum-ell-only-2}
{ &C^k \, k^{j/2} \, \frac{k^{\frac{x}{2} k} \, \left( (2+x) k \right)^{(\frac{x}{2} + 1) k} \,
    e^{\frac{x}{2} k}}{k^k \, (x k)^{x k}}
  = C^k \, k^{j/2} \, \frac{(2 + x)^{(\frac{x}{2} + 1)k} \, e^{\frac{x}{2} k}}{x^{x k}} \\
  &\qquad = C^k \, k^{j/2} \, \left[ \frac{(2 + x)^{(\frac{x}{2} + 1)} \, 
    e^{\frac{x}{2}}}{x^{x}} \right]^k
  \le C^k \, k^{j/2}. }
This proves statement (i) of the lemma.

(ii) When $\rho \ge \sqrt{k}$, using $\tkappa_2 \ge 1$ 
(which holds due to $y \ge 0$), we have
\eqnsplst
{ \frac{C^k}{\sqrt{\tkappa_2}} \, \sum_{j=k+1}^{k^2} \frac{k^{j/2}}{(\sqrt{\tkappa_2} \, \rho)^j}
  &\le \frac{C^k \, k^{k/2}}{\sqrt{\tkappa_2} \, (\sqrt{\tkappa_2} \, \rho)^{k+1}} \, 
     \sum_{j=k+1}^{k^2} \frac{k^{(j-k)/2}}{\rho^{j-k-1}}
  \le \frac{C^k \, k^{k/2}}{\sqrt{\tkappa_2} \, (\sqrt{\tkappa_2} \, \rho)^{k+1}}. }
\end{proof}

We also need bounds on the tails of $\chi_k(t)$ and $\chi_{\tY^{(r)}}(t)$,
in the range $|t| > \frac{1}{2} \rho$. 

\begin{lemma}
\label{lem:tail-bnd-tYr}
For $|t| > \rho/2$ we have
\eqn{e:tail-bnd-tYr}
{ \left| e^{-ity} \, \chi_{\tY^{(r)}}(t) \right|
  \le \exp \left( - c \, \rho^{d_1} \, |t|^{d/\gamma} \right) \,
      \exp \left( - c \, \rho^2 \, \sqrt{\tkappa_2} \right). }
\end{lemma}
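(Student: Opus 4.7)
The plan is to show that
\[ -\log\bigl|e^{-ity}\chi_{\tY^{(r)}}(t)\bigr| \ \ge\ c_1\,\rho^{d_1}|t|^{d/\gamma}\ +\ c_2\,\rho^2\sqrt{\tkappa_2} \]
for $|t|>\rho/2$, from which the lemma follows by exponentiation. Taking the real part of the logarithm in \eqref{e:tchi-expression-1} gives
\[ -\log\bigl|e^{-ity}\chi_{\tY^{(r)}}(t)\bigr| \ =\ d_1\,\rho^2\int_0^1 e^{\xi u/\rho}\bigl(1-\cos(tu/\rho)\bigr)\,u^{-1-d/\gamma}\,du \ =:\ d_1\,\rho^2\,F(a,s), \]
with $a := |t|/\rho > 1/2$ and $s := \xi/\rho \ge 0$ (the latter since we are in the upper tail, so $\xi \ge 0$). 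I prove the two lower bounds $F(a,s)\ge c_1'\,a^{d/\gamma}$ and $F(a,s)\ge c_2'\sqrt{\tkappa_2}$ separately, then combine via $F\ge \tfrac12\bigl(c_1' a^{d/\gamma}+c_2'\sqrt{\tkappa_2}\bigr)$.

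For the first inequality, I use $e^{\xi u/\rho}\ge 1$ and the substitution $w=au$:
\[ F(a,s) \ \ge\ \int_0^1(1-\cos(au))\,u^{-1-d/\gamma}\,du \ =\ a^{d/\gamma}\int_0^{a}(1-\cos w)\,w^{-1-d/\gamma}\,dw \ \ge\ a^{d/\gamma}\int_0^{1/2}(1-\cos w)\,w^{-1-d/\gamma}\,dw, \]
which is a positive multiple of $a^{d/\gamma}$ since $a>1/2$.

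For the second inequality, I split on the size of $\tkappa_2$. If $\tkappa_2\le C_*$ for a suitably large constant $C_*$, the first bound together with $a>1/2$ already yields $d_1\rho^2 F(a,s)\ge c\rho^2\ge c'\rho^2\sqrt{\tkappa_2}$. If $\tkappa_2>C_*$, then, since $\tkappa_2$ is continuous and strictly increasing in $s$ with $\tkappa_2(0)=1$, one has $s\ge s_0$ for $s_0$ as large as desired, and $\tkappa_2\le e^s$. Using $u^{-1-d/\gamma}\ge u^{1-d/\gamma}$ on $(0,1]$ together with the Taylor bound $(1-\cos(au))\ge (2/\pi^2)(au)^2$ on $|au|\le \pi$: in the subcase $1/2<a\le\pi$ the Taylor bound holds on all of $[0,1]$ and gives $F\ge c a^2\tkappa_2\ge c'\sqrt{\tkappa_2}$ (using $\tkappa_2\ge 1$). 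In the subcase $a>\pi$, substitute $v=1-u$ and use $(1-v)^{-1-d/\gamma}\ge 1$ to obtain
\[ F(a,s)\ \ge\ d_1 e^s\int_0^{1/2}e^{-sv}(1-\cos(a-av))\,dv, \]
then apply the closed-form identity
\[ \int_0^{\infty} e^{-sv}(1-\cos(a-av))\,dv \ =\ \frac{1}{s}-\frac{s\cos a+a\sin a}{s^2+a^2} \ \ge\ \frac{a^2}{2s(s^2+a^2)}, \]
where the inequality follows from $|s\cos a+a\sin a|\le\sqrt{s^2+a^2}$, the identity $\sqrt{s^2+a^2}-s=a^2/(\sqrt{s^2+a^2}+s)$, and $\sqrt{s^2+a^2}+s\le 2\sqrt{s^2+a^2}$. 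The tail $\int_{1/2}^\infty$ contributes only $O(e^{-s/2}/s)$, which is dominated by the main term once $s_0$ is large. Thus $F(a,s)\ge c e^s a^2/(s(s^2+a^2))$, which exceeds a positive multiple of $\sqrt{\tkappa_2}\asymp e^{s/2}/\sqrt{s}$ whenever $s\ge s_0$ is sufficiently large.

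The main technical obstacle is the subcase $a>\pi$ with $a$ close to a multiple of $2\pi$, where $(1-\cos a)$ is tiny and a naive pointwise estimate of the integrand near $u=1$ underestimates $F$. The closed-form trigonometric identity above bypasses this difficulty by averaging $(1-\cos)$ over oscillations of $\cos(au)$ automatically, producing a lower bound that depends only on the magnitudes of $a$ and $s$, not on the phase $a\bmod 2\pi$.
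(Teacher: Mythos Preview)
Your proof is correct, but it takes a considerably longer route to the second bound than the paper does. For the first factor $\exp(-c\rho^{d_1}|t|^{d/\gamma})$ your argument and the paper's are essentially the same: restrict to small $u$ (the paper takes $u\le \tfrac14\rho|t|^{-1}$ and Taylor-expands the cosine; you substitute $w=au$ and keep $w\le 1/2$), and use $e^{\xi u/\rho}\ge 1$.

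For the second factor $\exp(-c\rho^2\sqrt{\tkappa_2})$ the paper is much more direct. It simply restricts the integral to $u\in[1/2,1]$, pulls out $e^{\xi u/\rho}\ge e^{\xi/(2\rho)}=\sqrt{e^{\xi/\rho}}\ge\sqrt{\tkappa_2}$ and $u^{-1-d/\gamma}\ge 1$, and then observes that since $|t|/\rho>1/2$ the period of $\cos(tu/\rho)$ is at most $4\pi$, so $\int_{1/2}^1(\cos(tu/\rho)-1)\,du$ is bounded above by a fixed negative constant, uniformly in $r$ and $t$. That one-line oscillation argument replaces your entire case split on the size of $\tkappa_2$, the subcase split on $a\le\pi$ versus $a>\pi$, the substitution $v=1-u$, the closed-form Laplace transform of $(1-\cos(a-av))$, and the verification that $e^s a^2/(s(s^2+a^2))$ dominates $e^{s/2}/\sqrt{s}$. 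Your approach does work (the Laplace-transform identity neatly handles the phase of $a$ modulo $2\pi$, as you note), and it has the minor virtue of giving explicit control in terms of $a$ and $s$ separately; but the paper's observation that a bounded-period cosine integrated over a fixed interval is uniformly bounded away from zero is both simpler and more robust.
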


\begin{proof}
We have
\eqnspl{e:tchi-re-log-estimate1}
{ \Re \, \log \left( e^{-ity} \, \chi_{\tY^{(r)}}(t) \right)
  &= d_1 \, \rho^2 \, \int_0^1 e^{\xi u / \rho} \,
    \left( \cos \left( \frac{t u}{\rho} \right) - 1 
    \right) \, u^{-1-d/\gamma} \, du. }
When $0 \le u \le \frac{1}{4} \rho \, |t|^{-1}$, we have 
$|t u / \rho| \le \frac{1}{4}$, and    
$\cos (t u / \rho) \le 1 - t^2 u^2 / 4 \rho^2$. Therefore, the
contribution of this interval of $u$ to the right hand side of
\eqref{e:tchi-re-log-estimate1} is at most
\eqnspl{e:decaying}
{ &- d_1 \, \rho^2 \, \int_0^{\frac{1}{4} \rho |t|^{-1}}
    \frac{t^2 u^2}{4 \rho^2} \, u^{-1-d/\gamma} \, du
  = - \frac{t^2}{4} \, d_1 \, \int_0^{\frac{1}{4} \rho |t|^{-1}}
    u^{1-d/\gamma} \, du \\
  &\qquad = - \frac{t^2}{4} \, d_1 \, \left( \frac{1}{4} \right)^{d_1} \,
    \rho^{d_1} \, |t|^{-d_1} 
  = - c \, \rho^{d_1} \, |t|^{d/\gamma}. }
On the other hand, the contribution of the interval $1/2 \le u \le 1$ 
to the right hand side of \eqref{e:tchi-re-log-estimate1} is at most
\eqnspl{e:oscillating}
{ d_1 \, \rho^2 \, \sqrt{e^{\xi/\rho}} \, 2^{1+d/\gamma} \,
    \int_{1/2}^1 \left( \cos \left( \frac{t u}{\rho} \right) - 1 
    \right) \, du
  \le - c \, \rho^2 \, \sqrt{e^{\xi/\rho}}. }
In the last step we used that the period of the cosine
function inside the integral is $O(1)$, and hence the 
value of the integral is bounded above by a negative 
constant independent of $r$ and $t$. Moreover, we have
\eqnspl{e:tkappa2-bnd}
{ \tkappa_2
  = d_1 \, \int_0^1 e^{\xi u / \rho} \, u^{1-d/\gamma} \, du
  \le e^{\xi / \rho}, }
which implies that the right hand side of \eqref{e:oscillating}
is at most $-c \, \rho^2 \, \sqrt{\tkappa_2}$. Combining
\eqref{e:decaying}, \eqref{e:oscillating} and \eqref{e:tkappa2-bnd}
yields the statement of the lemma.
\end{proof}

\begin{lemma}
\label{lem:chik-tail-int-both1}
Suppose $\rho \ge \sqrt{k}$. 
Then we have
\eqnst
{ \int_{|t| > \rho/2} |\chi_k(t)| \, dt
  \le \frac{C^k \, k^{k/2}}{\sqrt{\tkappa_2} \, (\sqrt{\tkappa_2} \rho)^{k+1}}
  \quad \text{ and } \quad
  \int_{|t| > \rho/2} \left| \chi_{\tY^{(r)}}(t) \right| \, dt
  \le \frac{C^k \, k^{k/2}}{\sqrt{\tkappa_2} \, (\sqrt{\tkappa_2} \rho)^{k+1}}. }
\end{lemma}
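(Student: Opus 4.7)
The proof would treat the two tail bounds separately, since they rely on different mechanisms: the first uses Gaussian decay times a polynomial, while the second uses the stronger super-Gaussian decay of Lemma \ref{lem:tail-bnd-tYr}.

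\textbf{First bound, on $\int_{|t|>\rho/2}|\chi_k(t)|\,dt$.} Recall that $\chi_k(t) = e^{-t^2\tkappa_2/2}\bigl[1 + \sum_{j=1}^k \sum_{\ell=0}^j \tc_{j,\ell}(it)^{j+2\ell}/\rho^j\bigr]$. Bounding the modulus of the bracket termwise, everything reduces to controlling
\[
J_m := \int_{|t|>\rho/2}|t|^m e^{-t^2\tkappa_2/2}\,dt, \qquad m = j+2\ell \le 3k.
\]
The plan is, via the substitution $s = t\sqrt{\tkappa_2}$ and iterating integration by parts in the resulting Gaussian tail (or, equivalently, applying \eqref{e:normal-tail-est} repeatedly), to establish
\[
J_m \le C\,\tkappa_2^{-(m+1)/2}(\rho\sqrt{\tkappa_2})^{m-1}e^{-\rho^2\tkappa_2/8},
\]
valid whenever $\rho^2\tkappa_2 \ge m$; this holds under the assumptions $\rho \ge \sqrt{k}$ and $\tkappa_2 \ge 1$ (recall that the upper-tail case gives $\tkappa_2 \ge 1$).

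\textbf{Second step for the first bound.} Substituting this estimate and using Lemma \ref{lem:tcjl} to bound $\tc_{j,\ell} \le \tkappa_3^\ell \ell^j/(\ell! j!) \le \tkappa_2^\ell \ell^j/(\ell! j!)$, and tracking powers carefully, the contribution of index $(j,\ell)$ collapses (after cancellations between the $\rho$'s and $\tkappa_2$'s) into a multiple of $(\rho^2\tkappa_2)^{\ell+k/2}e^{-\rho^2\tkappa_2/8}/(\sqrt{\tkappa_2}(\sqrt{\tkappa_2}\rho)^{k+1})$. Writing $z = \rho^2\tkappa_2/8$, the function $z \mapsto z^M e^{-z}$ is maximized at $z = M$ with value $(M/e)^M$; summing over $(j,\ell)$ and applying Stirling's formula \eqref{e:Stirling} to handle the factorial ratios $\ell^j/(\ell!j!)$ yields the claimed bound $C^k k^{k/2}/(\sqrt{\tkappa_2}(\sqrt{\tkappa_2}\rho)^{k+1})$. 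The constant term $1$ in the bracket is handled directly by \eqref{e:normal-tail-est} and contributes a term of the same form (with better decay).

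\textbf{Second bound, on $\int_{|t|>\rho/2}|\chi_{\tY^{(r)}}(t)|\,dt$.} Since $|\chi_{\tY^{(r)}}(t)| = |e^{-ity}\chi_{\tY^{(r)}}(t)|$, Lemma \ref{lem:tail-bnd-tYr} yields
\[
|\chi_{\tY^{(r)}}(t)| \le e^{-c\rho^{d_1}|t|^{d/\gamma}}\,e^{-c\rho^2\sqrt{\tkappa_2}}.
\]
The change of variables $u = c\rho^{d_1}|t|^{d/\gamma}$ reduces $\int_{|t|>\rho/2}e^{-c\rho^{d_1}|t|^{d/\gamma}}\,dt$ to an upper incomplete gamma integral and produces a polynomial prefactor $C\rho^{-d_1\gamma/d}$. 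Setting $a = \rho\sqrt{\tkappa_2}\ge\sqrt{k}$ and noting $\rho^2\sqrt{\tkappa_2} = a\rho \ge a\sqrt{k}$, the comparison to the target reduces to showing $a^{k+1}e^{-ca\sqrt{k}} \le C^k k^{k/2}$ up to harmless polynomial factors. Calculus shows that the left side is maximized at $a_* = (k+1)/(c\sqrt{k})$ with value $\sim C^k k^{(k+1)/2}$, and the extra $\sqrt{k}$ is absorbed into $C^k$; the residual $\sqrt{\tkappa_2}\,\rho^{-d_1\gamma/d}$ is dominated by the exponential.

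\textbf{Main obstacle.} The hardest piece is the bookkeeping in the first bound: making the double sum $\sum_{j,\ell}$ telescope to the correct power-counting in $\tkappa_2$ and $\rho$. The $\tc_{j,\ell}$ estimate, the Gaussian-moment factor $\tkappa_2^{-(m+1)/2}(\rho\sqrt{\tkappa_2})^{m-1}$, and the tail factor $e^{-\rho^2\tkappa_2/8}$ interact in a delicate way, and one must use Stirling's formula to verify that the resulting maxima of $z^M e^{-z}$ combined with $\ell^j/(\ell!j!)$ indeed total $C^k k^{k/2}$ and not some larger expression such as $k^{3k/2}$.
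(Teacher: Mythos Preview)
Your approach is correct and essentially mirrors the paper's proof: both bound $|\chi_k|$ termwise using Lemma~\ref{lem:tcjl}, control the tail moments $J_m$ by repeated integration by parts, and for the second claim integrate the estimate of Lemma~\ref{lem:tail-bnd-tYr} and compare the resulting exponential $e^{-c\rho^2\sqrt{\tkappa_2}}$ to the target. The only cosmetic difference is in the final bookkeeping for the first bound: the paper first sums $\sum_j (\tkappa_2\rho^2)^j/(16^j j!)\le e^{\tkappa_2\rho^2/16}$ and then uses the residual exponential decay, whereas you bound each $z^{M}e^{-z}$ pointwise by $(M/e)^M$ and combine with Stirling --- both routes land on the same $C^k k^{k/2}$.
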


\begin{proof}
Using Lemma \ref{lem:tcjl}, the integral in the first claim
can be estimated by
\eqnspl{e:chik-tail1}
{ \int_{|t| > \rho/2} |\chi_k(t)| \, dt
  &\le 2 \int_{\rho/2}^\infty e^{-\frac{t^2}{2} \tkappa_2} 
      \left[ 1 + \sum_{j=1}^k \frac{\left| {p}_j(it) \right|}{\rho^j} \right] \, dt \\
  &\le \frac{C}{\tkappa_2 \, \rho} e^{-\rho^2 \tkappa_2 / 8} + 
      2 \sum_{j=1}^k \frac{1}{\rho^j} \sum_{\ell=1}^j 
      \frac{\left( \tkappa_2 \right)^\ell \, \ell^{j}}{\ell! \, j!} 
      \int_{\rho/2}^\infty t^{j+2\ell} \, e^{-\frac{t^2}{2} \tkappa_2} \, dt. }
Repeated integration by parts yields
\eqnsplst
{ \int_{\rho/2}^\infty t^{j+2\ell} \, e^{-\frac{t^2}{2} \tkappa_2} \, dt
  = \frac{2 \, e^{- \rho^2 \tkappa_2 / 8}}{\tkappa_2 \, \rho} \, 
    \left[ \left( \rho / 2 \right)^{j+2\ell}
      + \frac{(j+2\ell-1)}{\tkappa_2} \, \left( \rho / 2 \right)^{j+2\ell-3} + \dots \right]. }
Since $j+2\ell-1 \le 3k - 1 \le 3 \, \rho^2 = 12 \, (\rho/2)^2$ and $\tkappa_2 \ge 1$,
the right hand side is bounded above by
\eqnst
{  \frac{C \, e^{- \rho^2 \tkappa_2 / 8}}{\tkappa_2 \, \rho} \, 3k \, \left( \rho / 2 \right)^{j+2\ell}
   \le \frac{C^k \, e^{ - \rho^2 \tkappa_2 / 8 }}{\tkappa_2 \, \rho} \, \frac{\rho^{j+2\ell}}{16^j}. }
Substituting this into the right hand side of \eqref{e:chik-tail1},
we get the upper bound
\eqnsplst
{ \frac{e^{ - \rho^2 \tkappa_2 / 8 }}{\tkappa_2 \, \rho} \, \left[ 2 + C^k \, 
     \sum_{j=1}^k \frac{1}{16^j \, j!}
     \sum_{\ell=1}^j \frac{\left( \tkappa_2 \, \rho^2 \right)^\ell \, \ell^j}{\ell!} \right]. }
Since $\tkappa_2 \, \rho^2 \ge k$, in the second sum the last term is the largest, and hence
we have the bound:
\eqnsplst
{ \frac{e^{ - \rho^2 \tkappa_2 / 8 }}{\tkappa_2 \, \rho} \, \left[ 2 + C^k \, 
    \sum_{j=1}^k \frac{(\tkappa_2 \, \rho^2)^j}{16^j \, j!} \right]
  \le \frac{e^{ - \rho^2 \tkappa_2 / 8 }}{\tkappa_2 \, \rho} \, 
    \left[ 2 + C^k \, e^{ \rho^2 \, \tkappa_2 / 16 } \right]
  \le \frac{C^k \, e^{ - \rho^2 \tkappa_2 / 16 }}{\tkappa_2 \, \rho}. }
The last expression equals
\eqnsplst
{ \frac{C^k \, k^{k/2}}{\sqrt{\tkappa_2} \, (\sqrt{\tkappa_2} \rho)^{k+1}} \, 
    \exp \left( - \frac{k}{2} \, \log k -\frac{\tkappa_2 \, \rho^2}{16} 
    + \frac{k}{2} \log \left( \tkappa_2 \, \rho^2 \right) \right)
  \le \frac{C^k}{\sqrt{\tkappa_2} \, (\sqrt{\tkappa_2} \rho)^{k+1}}, }
using that $\tkappa_2 \rho^2 \ge k$.

For the second claim, the bound derived in \eqref{e:tail-bnd-tYr} 
(Lemma \ref{lem:tail-bnd-tYr}) gives
\eqnspl{e:chYr-int-tail-both}
{ &\int_{|t| > \rho/2} | e^{-iyt} \, \chi_{\tY^{(r)}}(t) | \, dt 
  \le \frac{C}{\rho} \exp \left( - c \, \rho^2 - c \, \rho^2 \, \sqrt{\tkappa_2} \right) 
  \le \frac{C}{\rho} \exp \left( - c \, \rho^2 \, \sqrt{\tkappa_2} \right) \\
  &\qquad = \frac{C \, k^{k/2}}{\sqrt{\tkappa_2} \, (\sqrt{\tkappa_2} \rho)^{k+1}}
    \exp \left( - k \, \frac{\log k}{2} - c \, \rho^2 \, \sqrt{\tkappa_2} + 
    (k+2) \, \log( \sqrt{\tkappa_2} \rho ) \right) \\
  &\qquad \le \frac{C \, k^{k/2}}{\sqrt{\tkappa_2} \, (\sqrt{\tkappa_2} \rho)^{k+1}} \, C^k. }
In the last step we used that $\rho \ge \sqrt{k}$ and $\tkappa_2 \ge 1$.
\end{proof}

Finally, we need the fact that $\tilde{\nf}_k$ is of order $1/\sqrt{\tkappa_2}$, given 
in the following lemma.

\begin{lemma}
\label{lem:right-order}
There exist constants $C_4 \ge 1$, $c$ and $C$ such that if $\rho \ge C_4 \, \sqrt{k}$, then
$c/\sqrt{\tkappa_2} \le \tilde{\nf}_k \le C/\sqrt{\tkappa_2}$.  
\end{lemma}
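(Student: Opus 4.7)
The plan is to analyse the bracket in the definition \eqref{e:tnfk-def}: write
\[ \tnf_k = \frac{1}{\sqrt{\tkappa_2}} \, \nf(0) \, [1 + E], \qquad E := \sum_{j=1}^k \frac{(-1)^j}{\rho^j} \sum_{\ell=1}^j \frac{\tc_{j,\ell}}{\tkappa_2^{j/2+\ell}} \, H_{j+2\ell}(0). \]
It suffices to show $|E| \le 1/2$ once $C_4$ is taken sufficiently large, since then the constants $c = \nf(0)/2$ and $C = 3\nf(0)/2$ do the job (the case $k = 0$ being immediate from $\tnf_0 = \nf(0)/\sqrt{\tkappa_2}$).

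In the upper-tail regime $y \ge 0$ we have $\xi \ge 0$, so \eqref{e:tkappa-formula} yields $\tkappa_2 \ge 1$, and the monotonicity noted just below \eqref{e:tkappa-formula} gives $\tkappa_3 \le \tkappa_2$. For the probabilists' Hermite polynomials one has $H_{2m+1}(0) = 0$ and $|H_{2m}(0)| = (2m-1)!! \le \sqrt{(2m)!}$, the latter because $(2m-1)!!^2 \le (2m-1)!!\,(2m)!! = (2m)!$. Combining these with Lemma \ref{lem:tcjl} (in which $\tkappa_3$ can now be replaced by $\tkappa_2$) gives
\[ |E| \le \sum_{j=1}^k \frac{1}{(\sqrt{\tkappa_2}\,\rho)^j} \sum_{\ell=1}^j \frac{\ell^j \sqrt{(j+2\ell)!}}{\ell!\,j!}. \]

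The inner sum is handled just as in the proof of Lemma \ref{lem:upto-order-k-both}: a ratio calculation analogous to \eqref{e:ratio} shows the summand is increasing in $\ell \in \{1, \dots, j\}$, because the ratio of successive terms is at least $(j+2\ell+1)/(\ell+1) \ge 2$ when $j \ge 1$. Hence the inner sum is at most $j$ times its $\ell = j$ term, and Stirling's formula \eqref{e:Stirling} yields
\[ \sum_{\ell=1}^j \frac{\ell^j \sqrt{(j+2\ell)!}}{\ell! \, j!} \le j \cdot \frac{j^j \sqrt{(3j)!}}{(j!)^2} \le C_7^j \, j^{j/2} \]
for an absolute constant $C_7$. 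Substituting, and using $\tkappa_2 \ge 1$ together with $\rho \ge C_4 \sqrt{k} \ge C_4 \sqrt{j}$, gives
\[ |E| \le \sum_{j=1}^k \left( \frac{C_7 \sqrt{j}}{\sqrt{\tkappa_2} \, \rho} \right)^j \le \sum_{j=1}^k \left( \frac{C_7}{C_4} \right)^j \le \frac{1}{2}, \]
provided $C_4 \ge 4 C_7$. No step presents a real obstacle: the bookkeeping parallels that in Lemmas \ref{lem:tcjl} and \ref{lem:upto-order-k-both}, and the only new ingredient is the elementary estimate $|H_{j+2\ell}(0)| \le \sqrt{(j+2\ell)!}$.
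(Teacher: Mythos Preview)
Your proof is correct and follows essentially the same strategy as the paper: bound the correction sum $E$ term by term using Lemma~\ref{lem:tcjl}, show via Stirling that the inner sum over $\ell$ contributes at most $C^j j^{j/2}$, and then use $\tkappa_2 \ge 1$ together with $\rho \ge C_4\sqrt{k} \ge C_4\sqrt{j}$ to make the geometric series small. The only cosmetic difference is that you work directly with the values $|H_{j+2\ell}(0)| \le \sqrt{(j+2\ell)!}$, whereas the paper invokes the Fourier representation $\tnf_k = \frac{1}{2\pi}\int \chi_k(t)\,dt$ and the Gaussian moment formula \eqref{e:Gauss-moments}, obtaining $\Gamma(\tfrac{j}{2}+\ell+\tfrac12)$ in place of your $\sqrt{(j+2\ell)!}$; by Stirling these are equivalent up to a factor $C^j$, so the two computations coincide.
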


\begin{proof}
We estimate, using Lemma \ref{lem:tcjl} in the third step: 
\eqnspl{e:nf_k-bnd}
{ &\left| \tnf_k - \frac{1}{\sqrt{2 \pi}} \, \frac{1}{\sqrt{\tkappa_2}} \right|
  \le \int_{-\infty}^{\infty} e^{-t^2 \tkappa_2/2} \, \sum_{j=1}^k \frac{| p_j(it) |}{\rho^j} \, dt \\
  &\qquad \le \sum_{j=1}^k \frac{1}{\rho^j} \sum_{\ell=1}^j \tilde{c}_{j,\ell} \,
       \int_{-\infty}^{\infty} e^{-t^2 \tkappa_2/2} \, |t|^{j+2\ell} \, dt \\
  &\qquad = \frac{1}{\sqrt{\tkappa_2}} \, \sum_{j=1}^k \frac{1}{\rho^j} \, 
       \sum_{\ell=1}^j \frac{\tilde{c}_{j,\ell}}{\tkappa_2^{\frac{j}{2}+\ell}} \,
       2^{\frac{j}{2}+\ell+\frac{1}{2}} \, \Gamma \left( \frac{j}{2} + \ell + \frac{1}{2} \right) \\
  &\qquad \le \frac{1}{\sqrt{\tkappa_2}} \, \sum_{j=1}^k \frac{1}{\rho^j} \, 
       \sum_{\ell=1}^j \frac{\tkappa_2^\ell \, \ell^j}{\ell! \, j! \, \tkappa_2^{\frac{j}{2}+\ell}} \,
       2^{\frac{j}{2}+\ell+\frac{1}{2}} \, \Gamma \left( \frac{j}{2} + \ell + \frac{1}{2} \right) \\
  &\qquad \le \frac{C}{\sqrt{\tkappa_2}} \, \sum_{j=1}^k \frac{1}{(\sqrt{\tkappa_2} \, \rho)^j} \, 
       \sum_{\ell=1}^j \frac{\ell^j \, (j+2\ell)^{\frac{j}{2}+\ell} \, e^{\frac{j}{2}}}      
       {\ell^\ell\, j^j}. }
Writing $\ell = xj$, the summand inside the sum over $\ell$ is of the form:
\eqnsplst
{ \frac{(x j)^j \, \left[ (1 + 2x) j \right]^{(\frac{1}{2}+x) j} \, (\sqrt{e})^j}{(x j)^{x j} \, j^j}
  &= j^{j/2} \, \frac{x^j \, (1 + 2x)^{(\frac{1}{2} + x) j} \, (\sqrt{e})^j}{x^{x j}} \\
  &= j^{j/2} \, \left[ \frac{x \, (1 + 2x)^{\frac{1}{2}+x} \, \sqrt{e}}{x^x} \right]^j 
  \le C^j \, j^{j/2}. }
Therefore, the right hand side of \eqref{e:nf_k-bnd} is at most
\eqnst
{ \frac{C}{\sqrt{\tkappa_2}} \, \sum_{j=1}^k \frac{C^j \, j^{j/2}}{( \sqrt{\tkappa_2} \rho )^j}. }
Choosing $C_4$ large enough the sum over $j$ can be made small, and the statement follows.
\end{proof}

\begin{proof}[Proof of Theorem \ref{thm:main} when 
$y \ge 0$.]
We estimate
\eqnspl{e:pieces}
{ \left| f_{\tY^{(r)}}(y) - \tnf_k \right|
  &\le I_1 + I_2 + I_3 + I_4
      + \frac{1}{2 \pi} \int_{|t| > \rho/2} 
      | \chi_{\tY^{(r)}}(t) | \, dt, }
where 
\eqnsplst
{ I_1
  &:= \frac{1}{2 \pi} \int_{-\rho/2}^{\rho/2}
      \left| e^{\widetilde{u}(t)} - e^{-\frac{t^2}{2} \tkappa_2 + \widetilde{q}_k(t)} \right| \, dt \\
  I_2
  &:= \frac{1}{2 \pi} \int_{-\rho/2}^{\rho/2}
      e^{-\frac{t^2}{2}} \left| e^{\widetilde{q}_k(t)} - 
      \sum_{\ell=0}^k \frac{\widetilde{q}_k(t)^\ell}{\ell!} \right| \, dt \\
  I_3
  &:= \frac{1}{2 \pi} \int_{-\rho/2}^{\rho/2}
      e^{-\frac{t^2}{2} \tkappa_2} \left| 1 + \sum_{j=1}^k \frac{\widetilde{p}_j(it)}{\rho^j}
      - \sum_{\ell=0}^k \frac{\widetilde{q}_k(t)^\ell}{\ell!} \right| \, dt \\
  I_4
  &:= \frac{1}{2 \pi} \int_{|t| > \rho/2} | \chi_k(t) | \, dt. }
The contributions $I_1$, $I_2$, $I_3$, $I_4$, and the last term,
respectively, were estimated in Lemmas \ref{lem:truncate-ut-both}, \ref{lem:truncate-exp-both},
\ref{lem:upto-order-k-both} and \ref{lem:chik-tail-int-both1}, respectively. 
The bounds provided by these lemmas are of the form: $C/\sqrt{\tkappa_2}$ multiplied by
a factor that is, in each case, bounded above by the claimed upper bound on 
$\eps_k$. Due to Lemma \ref{lem:right-order}, $\tnf_k$ is of the order $1/\sqrt{\tkappa_2}$,
and hence the theorem follows.
\end{proof}

\subsection{Lower tail}
\label{sssec:lower-tail-ew}

In this section we prove Theorem \ref{thm:main} in the case 
$-\mu(r)/\sigma(r) < y \le 0$. What makes this case
different from the previous section is that $\tkappa_2 \le 1$,
and in fact, $\tkappa_2 \to 0$ as $y \to -\mu(r)/\sigma(r)$. This means that 
the tail of the Gaussian $e^{-\frac{t^2 \, \tkappa_2}{2}}$
decays slower, and more care is needed.
For ease of notation we write $\zeta = \zeta(y,r) = - \xi(y,r) \ge 0$.
We also write $d_3 = d_1/(1-d/\gamma)$, so that $\mu(r) / \sigma(r) = d_3 \rho$.
Then from \eqref{e:sol-y-2} we have
\eqnspl{e:y-lower-tail}
{ y 
  &= - d_3 \, \rho + d_1 \, \rho \, \int_0^1 e^{-\zeta u / \rho} u^{-d/\gamma} \, du \\
  &= - d_3 \, \rho \left( 1 - \frac{(1 - \frac{d}{\gamma}) \, \rho^{1-d/\gamma}}{\zeta^{1-d/\gamma}} 
    \int_0^{\zeta/\rho} e^{-v} v^{-d/\gamma} \, dv \right), }
which shows that as $y$ varies in the interval $(-d_3 \, \rho, 0]$, the 
number $-\zeta(y,r)$ varies in the interval $(-\infty,0]$. 
It will be useful to write some of the estimates in terms of $\zeta/\rho$, 
rather than $\tkappa_2$. The following relationship will be useful:
\eqnsplst
{ \tkappa_2
  &= d_1 \, \int_0^1 e^{-\zeta u /\rho} \, u^{1-d/\gamma} \, du
  = d_1 \, \left( \frac{\zeta}{\rho} \right)^{-d_1} \, 
    \int_0^{\zeta/\rho} e^{-v} \, v^{d_1-1} \, dv \\
  &= \left( \frac{\zeta}{\rho} \right)^{-d_1} \, \left( \Gamma(d_1+1) 
    - O \left( \left(\frac{\zeta}{\rho}\right)^{d_1-1} \, e^{-\zeta/\rho} \right) \right),
    \quad \text{as $\zeta/\rho \to \infty$.} }
Since $1 = \Gamma(2) < \Gamma(d_1+1) < \Gamma(3) = 2$, we can fix a constant $D$
with the property that 
\eqn{e:D-ass}
{ (\zeta/\rho)^{-d_1} 
  \le \tkappa_2 
  \le 2 (\zeta/\rho)^{-d_1}, \quad \text{ when $\zeta / \rho \ge D$.} }
For later use, we are going to assume that 
\eqn{e:D-ass-2}
{ D \ge 3 \qquad \text{ and } \qquad D^{d/\gamma} \ge 9/2. }

Recall the formula
\eqnspl{e:tchi-expression-ew-lower}
{ e^{-ity} \, \chi_{\tY^{(r)}}(t)
  = \exp \left( d_1 \, \rho^2 \, \int_0^1 e^{-\zeta u / \rho}
    \left( e^{itu/\rho} - 1 - \frac{itu}{\rho} \right) \, u^{-1-d/\gamma} \, du \right), }
where $\rho = r^{d/2}$ and $d_1 = 2 - \frac{d}{\gamma}$.
We will frequently use the observation that 
\eqnspl{e:tkappan-est}
{ \tkappa_n
  &= d_1 \int_0^1 e^{-\zeta u / \rho} \, u^{n-1-d/\gamma} \, du \\
  &\le \tkappa_2 \, (\zeta/\rho)^{-(n-2)} \, \left( n-1-\frac{d}{\gamma} \right) \,
      \left( n - 2 - \frac{d}{\gamma} \right) \cdots \left( 2 - \frac{d}{\gamma} \right) \\
  &\le \tkappa_2 \, (\zeta/\rho)^{-(n-2)} \, (n-1)!, 
      \quad n \ge 2, }
which is obtained via integrating by parts $n-2$ times and dropping negative terms.

We continue to use the expressions defined in \eqref{e:qkrk-etc-ew}.
We give separate arguments depending on whether $\zeta/\rho \ge D$ or not.
In the former case, an argument similar to that in Section \ref{sssec:upper-tail-ew} 
works, and we give a slightly different argument in the latter case.


We first consider $|t| \le \frac{1}{3} \zeta = \frac{1}{3} \, \rho \, (\zeta/\rho)$. 
As in Section \ref{sssec:upper-tail-ew}, we estimate 
$e^{-ity} \, \chi_{\tY^{(r)}}(t) = e^{{u}(t)}$ first by 
${\tchi}_k(t) = \exp \left( - \frac{t^2}{2} \, \tkappa_2 + {q}_k(t) \right)$,
then by the Taylor expansion of $e^{{q}_k(t)}$, and keep terms of order
$\rho^{-k}$ and lower. The following series of lemmas gives the estimates.

\begin{lemma}
\label{lem:qkt-est}
For $|t| < \zeta/3$ we have
\eqn{e:qkt-est}
{ \max \left\{ \left| {q}_k(t) \right|,\, \left| {q}_k(t) + {r}_k(t) \right| \right\} 
  \le \frac{t^2}{2} \, \tkappa_2 \, \frac{|t|}{\zeta} 
  \le \frac{t^2}{4} \, \tkappa_2. }
\end{lemma}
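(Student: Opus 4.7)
The plan is to use the estimate \eqref{e:tkappan-est} to reduce the series defining $q_k(t)+r_k(t)$ (which dominates $q_k(t)$ alone) to a geometric series in $|t|/\zeta$, and then exploit $|t|/\zeta<1/3$.

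First I would write
\[
\left| q_k(t)+r_k(t) \right|
\;\le\; \rho^2 \sum_{n=3}^{\infty} \frac{|t|^n}{\rho^n\, n!}\, \tkappa_n,
\]
and insert the bound $\tkappa_n \le \tkappa_2\,(\zeta/\rho)^{-(n-2)}\,(n-1)!$ from \eqref{e:tkappan-est}. This causes the $n!$ in the denominator to cancel against $(n-1)!$, leaving $1/n$, and turns the sum into
\[
\tkappa_2\,\rho^2 \left(\frac{\zeta}{\rho}\right)^{2}\,\sum_{n=3}^{\infty}\frac{1}{n}\left(\frac{|t|}{\zeta}\right)^{n}
\;=\;\tkappa_2\,\zeta^{2}\sum_{n=3}^{\infty}\frac{1}{n}\left(\frac{|t|}{\zeta}\right)^{n}.
\]
Using $1/n \le 1/3$ for $n\ge 3$ and the geometric series bound $\sum_{n\ge 3}(|t|/\zeta)^{n} \le (|t|/\zeta)^{3}/(1-|t|/\zeta)$, together with $|t|/\zeta<1/3$ which gives $1/(1-|t|/\zeta)<3/2$, yields
\[
\left| q_k(t)+r_k(t) \right| \;\le\; \tkappa_2\,\zeta^{2}\cdot\frac{1}{3}\cdot\frac{3}{2}\cdot\frac{|t|^{3}}{\zeta^{3}}
\;=\;\frac{t^{2}}{2}\,\tkappa_2\,\frac{|t|}{\zeta},
\]
which is exactly the first claimed bound. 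The same estimate obviously dominates $|q_k(t)|$, since $q_k$ is the truncation of the same series.

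The second inequality in \eqref{e:qkt-est} is immediate from $|t|/\zeta<1/3<1/2$. I don't foresee any genuine obstacle here: the only thing to keep track of carefully is the bookkeeping of constants so that $|t|/\zeta$ (rather than a worse quantity such as $|t|/\rho$) comes out of the sum, which is why the factor $(\zeta/\rho)^{-(n-2)}$ from \eqref{e:tkappan-est} is the decisive input. The condition $|t|<\zeta/3$ is precisely what is needed to control the geometric series and absorb the constant $1/2$ in the final inequality.
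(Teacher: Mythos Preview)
Your proof is correct and follows essentially the same approach as the paper's: bound $\tkappa_n$ via \eqref{e:tkappan-est}, reduce to a geometric series in $|t|/\zeta$, and use $|t|/\zeta<1/3$ to close. The only difference is cosmetic bookkeeping---the paper factors the sum as $\tkappa_2\,|t|^3/\zeta\,\sum_{n\ge 3}(|t|/\zeta)^{n-3}/n$ rather than $\tkappa_2\,\zeta^2\sum_{n\ge 3}(|t|/\zeta)^n/n$, but these are the same expression.
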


\begin{proof}
From \eqref{e:qkrk-etc-ew} and \eqref{e:tkappan-est} we have that the left hans side of
\eqref{e:qkt-est} is at most
\eqnst
{ \frac{\tkappa_2}{\zeta/\rho} \, \rho^2 \, \sum_{n=3}^{\infty} 
     \frac{|t|^n \, (n-1)!}{(\zeta/\rho)^{n-3} \, \rho^n \, n!}
  \le \tkappa_2 \, \frac{|t|^3}{\zeta} \, \sum_{n=3}^\infty 
     \frac{|t|^{n-3}}{\zeta^{n-3} \, n}
  \le \frac{t^2}{3} \, \tkappa_2 \, \frac{|t|}{\zeta} \, \frac{1}{1 - \frac{|t|}{\zeta}}
  \le \frac{t^2}{4} \, \tkappa_2. }
\end{proof}

\begin{lemma}
\label{lem:truncate-ut-ew-lower}
There exists a constant $C$ such that when $\zeta/\rho \ge D$ then 
for all $r \ge 1$ and $k \ge 1$ we have
\eqnsplst
{ \int_{- \frac{1}{3} \zeta}^{\frac{1}{3} \zeta}
    \left| e^{{u}(t)} - {\tchi}_k(t) \right| \, dt
  \le \frac{C^k \, k^{k/2}}{\sqrt{\tkappa_2}} \, 
    \left(\frac{\tkappa_2^{\frac{1}{d_1}-\frac{1}{2}}}{\rho} \right)^{k+1}. }
When $0 \le \zeta/\rho < D$, the same upper bound applies to
$\int_{-\rho/3}^{\rho/3} \left| e^{{u}(t)} - {\tchi}_k(t) \right| \, dt$.
\end{lemma}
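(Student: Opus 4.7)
The strategy closely mirrors Lemma \ref{lem:truncate-ut-both}, but with the cutoff and the $\tkappa_n$ bounds adjusted to handle the regime $\tkappa_2 \le 1$. In both cases I apply \eqref{e:exp-alpha-beta} with
$\alpha = -\tfrac{t^2}{2}\tkappa_2 + q_k(t)$, $\beta = u(t)$, and $x = \tfrac{t^2}{2}\tkappa_2$, reducing the task to controlling $|\beta-\alpha| = |r_k(t)|$ and the max-exponent $\max\{|q_k(t)|,|q_k(t)+r_k(t)|\}$.

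Case 1: $\zeta/\rho \ge D$, cutoff $|t|\le \zeta/3$. From the $(k+3)$-term Taylor remainder of $e^{itu/\rho}$, multiplying by $d_1\rho^2 e^{-\zeta u/\rho} u^{-1-d/\gamma}$ and integrating over $u\in[0,1]$ gives
\[
|r_k(t)| \;\le\; \frac{|t|^{k+3}\,\tkappa_{k+3}}{(k+3)!\,\rho^{k+1}}.
\]
Using \eqref{e:tkappan-est}, $\tkappa_{k+3}\le \tkappa_2(\zeta/\rho)^{-(k+1)}(k+2)!$, so $|r_k(t)|\le \frac{|t|^{k+3}\,\tkappa_2}{(k+3)(\zeta/\rho)^{k+1}\rho^{k+1}}$. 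Lemma~\ref{lem:qkt-est} controls the max-exponent by $\tfrac{t^2}{4}\tkappa_2$. Applying \eqref{e:exp-alpha-beta} yields $|e^{u(t)}-\tchi_k(t)|\le e^{-t^2\tkappa_2/4}|r_k(t)|$. Integrating using the Gaussian moment \eqref{e:Gauss-moments} and Stirling \eqref{e:Stirling}, the right-hand side becomes (up to $C^k$)
\[
\frac{k^{k/2}\,\tkappa_2}{(\zeta/\rho)^{k+1}\rho^{k+1}}\cdot \tkappa_2^{-(k+4)/2}
 = \frac{k^{k/2}}{\sqrt{\tkappa_2}}\cdot \frac{1}{\rho^{k+1}}\cdot\frac{\tkappa_2^{-(k+1)/2}}{(\zeta/\rho)^{k+1}}.
\]
Now \eqref{e:D-ass} gives $(\zeta/\rho)^{-1}\le \tkappa_2^{1/d_1}$, turning the last factor into $\bigl(\tkappa_2^{1/d_1-1/2}\bigr)^{k+1}$, exactly the claimed bound.

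Case 2: $\zeta/\rho < D$, cutoff $|t|\le \rho/3$. Here $\tkappa_2$ is bounded below by a constant (since $e^{-\zeta u/\rho}\ge e^{-D}$), so the $\tkappa_2$-dependent factors in the claimed bound are harmless. I repeat the argument above but bound $\tkappa_n$ by the crude estimate $\tkappa_n\le d_1/(n-d/\gamma)$. The Taylor remainder gives $|r_k(t)|\le C|t|^{k+3}/\bigl((k+3)!\rho^{k+1}\bigr)$. For the max-exponent, the computation shows $|q_k(t)+r_k(t)|\le \tfrac{e^{1/3}}{6}\tkappa_3\,|t|^3/\rho \le \tfrac{t^2}{4}\tkappa_2$, using $|t|/\rho\le 1/3$ and $\tkappa_3\le\tkappa_2$. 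Then Gaussian integration plus Stirling gives an upper bound of order $C^k k^{-k/2}/\rho^{k+1}$, which is dominated by $C^k k^{k/2}/\rho^{k+1}$ and hence by the stated bound (since $\tkappa_2$ is bounded above and below).

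The main obstacle is keeping all the $\tkappa_2$-powers in the right place so that the final exponent is $1/d_1-1/2$ rather than a mixture; this is where \eqref{e:D-ass} plays the decisive role in Case~1. A secondary subtlety is verifying that Lemma~\ref{lem:qkt-est} indeed gives the factor $\tkappa_2$ (not just $1$) in the Gaussian damping exponent, which is essential so that the Gaussian moments produce the required negative power $\tkappa_2^{-(k+4)/2}$ that combines with $(\zeta/\rho)^{-(k+1)}$ via \eqref{e:D-ass} into the desired $\tkappa_2^{(1/d_1-1/2)(k+1)}$.
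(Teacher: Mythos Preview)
Your proposal is correct and follows essentially the same approach as the paper: in Case~1 you apply \eqref{e:exp-alpha-beta} with the same choice of $\alpha$, $\beta$, $x$, bound $|r_k(t)|$ via the Taylor remainder and \eqref{e:tkappan-est}, control the max-exponent with Lemma~\ref{lem:qkt-est}, integrate using \eqref{e:Gauss-moments}, and then convert $(\zeta/\rho)^{-(k+1)}$ into the $\tkappa_2^{(1/d_1-1/2)(k+1)}$ factor via \eqref{e:D-ass}; in Case~2 you (like the paper) fall back on the argument of Lemma~\ref{lem:truncate-ut-both}, using that $\tkappa_2$ is bounded away from~$0$.
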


\begin{proof}
As in Lemma \ref{lem:truncate-ut-both}, let $\alpha(t) = - \frac{t^2 \, \tkappa_2}{2} + q_k(t)$,
$\beta(t) = u(t)$ and $x(t) = -\frac{t^2 \, \tkappa_2}{2}$. As in that Lemma, we have
$| \beta(t) - \alpha(t) | \le t^2 \, \frac{|t|^{k+1}}{\rho^{k+1} \, (k+3)!} \, \tkappa_{k+3}$.
Using \eqref{e:tkappan-est}, we get
\eqnsplst
{ | \beta(t) - \alpha(t) | 
  \le t^2 \, \frac{|t|^{k+1} \, (k+2)!}{\rho^{k+1} \, (k+3)!} \, 
    \left( \frac{\zeta}{\rho} \right)^{-(k+1)} \, \tkappa_2 
  \le \frac{t^2}{4} \, \tkappa_2 \, \left( \frac{|t|}{\zeta} \right)^{k+1}
  \le \frac{t^2}{4} \, \tkappa_2. }
Applying \eqref{e:exp-alpha-beta}, using Lemma \ref{lem:qkt-est}, and
then \eqref{e:Gauss-moments} yields: 
\eqnsplst
{ &\int_{- \frac{1}{3} \zeta}^{\frac{1}{3} \zeta}
     \left| e^{-ity} \, \chi_{\tY^{(r)}}(t) - {\tchi}_k(t) \right| \, dt 
  \le \frac{\tkappa_2}{4 \, \zeta^{k+1}} \,
     \int_{-\infty}^\infty |t|^{k+3} \, e^{-t^2 \tkappa_2 /4} \, dt \\
  &\qquad \le \frac{C^k \, k^{k/2}}{((\zeta/\rho) \rho)^{k+1}} \,
     \frac{\tkappa_2}{\tkappa_2^{\frac{k}{2}+2}}
  \le \frac{C^k \, k^{k/2}}{\sqrt{\tkappa_2}} \, 
     \left( \frac{\tkappa_2^{\frac{1}{d_1}-\frac{1}{2}}}{\rho} \right)^{k+1}. }
In the last inequality we used that $(\zeta/\rho) \ge \tkappa_2^{-1/d_1}$ when 
$\zeta / \rho \ge D$ (recall \eqref{e:D-ass}).

When $0 \le \zeta/\rho < D$, the proof of Lemma \ref{lem:truncate-ut-both}
can be followed, noting that $\tkappa_2$ is bounded away from $0$. 
\end{proof}

We now find an estimate for the error resulting from
omitting ${\tilde{r}}_k(t)$ in \eqref{e:tchik-both}. 
We do this in two steps, similarly to Lemmas \ref{lem:truncate-exp-both} and 
\ref{lem:upto-order-k-both}

\begin{lemma}
\label{lem:truncate-exp-ew-lower}
There is a constant $C$ such that if $\zeta/\rho \ge D$ then for all $r \ge 1$,
$k \ge 1$ we have
\eqn{e:exp-diff-ew-lower}
{ \int_{- \frac{1}{3} \zeta}^{\frac{1}{3} \zeta} e^{-t^2 \tkappa_2 / 2} \, 
    \left| e^{{q}_k(t)} 
    - \sum_{\ell=0}^k \frac{{q}_k(t)^\ell}{\ell!} \right| \, dt
  \le \frac{C^k \, k^{k/2}}{\sqrt{\tkappa_2}} \, 
    \left( \frac{\tkappa_2^{\frac{1}{d_1}-\frac{1}{2}}}{\rho} \right)^{k+1}. }
When $0 \le \zeta/\rho < D$, the same upper bound applies to
$\int_{-\rho/3}^{\rho/3} e^{-t^2 \tkappa_2 / 2} \, 
    \left| e^{{q}_k(t)} 
    - \sum_{\ell=0}^k \frac{{q}_k(t)^\ell}{\ell!} \right| \, dt$.
\end{lemma}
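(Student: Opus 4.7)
The plan is to follow the template of Lemma \ref{lem:truncate-exp-both}, but to use the sharper bound from Lemma \ref{lem:qkt-est} in place of the naive bound on $|q_k(t)|$. The key subtlety is that in the lower-tail regime $\tkappa_2$ can be arbitrarily close to $0$, so a verbatim transcription of Lemma \ref{lem:truncate-exp-both} would produce a bound of order $(\sqrt{\tkappa_2}\,\rho)^{-(k+1)}$ which blows up as $\tkappa_2 \to 0$. The extra factor $|t|/\zeta$ in Lemma \ref{lem:qkt-est} is what rescues the argument and converts this to the improved dependence $(\tkappa_2^{1/d_1 - 1/2}/\rho)^{k+1}$.

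For $\zeta/\rho \ge D$, I apply the standard Taylor tail estimate $|e^z - \sum_{\ell=0}^k z^\ell/\ell!| \le |z|^{k+1}\,e^{|z|}/(k+1)!$ with $z = q_k(t)$. For $|t| < \zeta/3$, Lemma \ref{lem:qkt-est} gives both $|q_k(t)| \le (t^2\tkappa_2/2)(|t|/\zeta)$ and $|q_k(t)| \le t^2\tkappa_2/4$. Combining these and multiplying through by $e^{-t^2\tkappa_2/2}$, the integrand is bounded by
\[
  \frac{\tkappa_2^{k+1}}{2^{k+1}\,(k+1)!} \,
  \frac{|t|^{3(k+1)}}{\zeta^{k+1}} \, e^{-t^2\tkappa_2/4}.
\]
Applying the Gaussian moment formula \eqref{e:Gauss-moments} and Stirling \eqref{e:Stirling} exactly as in the proof of Lemma \ref{lem:truncate-exp-both}, the integral is at most $C^k\,k^{k/2}/(\sqrt{\tkappa_2}\,(\zeta\sqrt{\tkappa_2})^{k+1})$. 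It remains to translate $\zeta\sqrt{\tkappa_2}$ into the desired denominator: by \eqref{e:D-ass}, $\zeta/\rho \ge D$ implies $\zeta/\rho \ge \tkappa_2^{-1/d_1}$, whence
\[
  \frac{1}{\zeta\,\sqrt{\tkappa_2}}
  = \frac{1}{\rho\,(\zeta/\rho)\,\sqrt{\tkappa_2}}
  \le \frac{\tkappa_2^{1/d_1 - 1/2}}{\rho},
\]
which gives the stated bound.

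For $0 \le \zeta/\rho < D$, both $\zeta/\rho$ and $\tkappa_2$ are bounded above and bounded away from zero by constants depending only on $D$ (using monotonicity of $\tkappa_2$ in $\zeta/\rho$ and the value $\tkappa_2 = 1$ at $\zeta = 0$). Then one may follow the proof of Lemma \ref{lem:truncate-exp-both} essentially verbatim on the slightly smaller interval $|t| \le \rho/3$: the bound $|q_k(t)| \le (|t|^3 e^{|t|/\rho}/(6\rho))\,\tkappa_2 \le t^2\tkappa_2/4$ is still valid there, and the resulting estimate $C^k\,k^{k/2}/(\sqrt{\tkappa_2}\,(\sqrt{\tkappa_2}\,\rho)^{k+1})$ is equivalent (up to constants absorbed into $C^k$) to the claimed bound, since $\tkappa_2$ is comparable to a positive constant in this regime. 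The main obstacle is the bookkeeping at the matching point $\zeta/\rho = D$: one must verify that the saving $(|t|/\zeta)^{k+1}$ in the first regime combines with \eqref{e:D-ass} to give precisely the exponent $1/d_1 - 1/2 \in (0,1/2)$ in the final bound, and that no constant is inadvertently lost in the transition to the second regime.
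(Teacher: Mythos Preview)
Your proposal is correct and follows essentially the same approach as the paper: bound the Taylor remainder of $e^{q_k(t)}$ using Lemma~\ref{lem:qkt-est} to get the crucial extra factor $(|t|/\zeta)^{k+1}$, integrate against the Gaussian via \eqref{e:Gauss-moments} and Stirling, and then convert $\zeta\sqrt{\tkappa_2}$ to $\rho/\tkappa_2^{1/d_1-1/2}$ via \eqref{e:D-ass}; for $\zeta/\rho < D$ both you and the paper simply invoke the upper-tail argument of Lemma~\ref{lem:truncate-exp-both} since $\tkappa_2$ is bounded away from zero. The only imprecision is the remark that $\zeta/\rho$ is ``bounded away from zero'' in the second regime (it can be $0$), but this is harmless since the relevant fact is that $\tkappa_2$ is bounded below, which you correctly justify.
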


\begin{proof}
Using Lemma \ref{lem:qkt-est}, the expression inside absolute values in the left hand 
side of \eqref{e:exp-diff-ew-lower} is at most
\eqnspl{e:qkt-bnd-both2}
{ \frac{\left| {q}_k(t) \right|^{k+1}}{(k+1)!} \, e^{|{q}_k(t)|}
  &\le \frac{\tkappa_2^{k+1}}{\zeta^{k+1}} \, \frac{|t|^{3(k+1)}}{(k+1)!} \, 
      \left( \frac{1}{2} \right)^{k+1} \, e^{t^2 \tkappa_2 / 4} 
  \le \frac{\tkappa_2^{k+1}}{((\zeta / \rho) \, \rho)^{k+1}} \, \frac{C^k}{k^k} \, 
      |t|^{3(k+1)} \, e^{t^2 \tkappa_2 / 4}. }
Using \eqref{e:Gauss-moments} and Stirling's formula, we have
\eqnst
{ \int_{-\zeta/3}^{\zeta/3} |t|^{3(k+1)} e^{-t^2 \tkappa_2 / 4} \, dt
  \le \left( \frac{4}{\tkappa_2} \right)^{\frac{3}{2} k + 2} \, 
      \Gamma \left( \frac{3}{2} k + 2 \right)
  \le C^k \, k^{3k/2} \, \tkappa_2^{- \frac{3}{2} k - 2}. }
Taking into account the additional terms in the right hand side
of \eqref{e:qkt-bnd-both2}, and using the inequality 
$(\zeta/\rho) \ge \tkappa_2^{-1/d_1}$ (when $\zeta / \rho \ge D$), 
this implies that the left hand side 
of \eqref{e:exp-diff-ew-lower} is bounded by the expression claimed 
in the Lemma.

When $0 \le \zeta/\rho < D$, the proof of Lemma \ref{lem:truncate-exp-both} can be followed,
noting that $\tkappa_2$ is bounded away from $0$.
\end{proof}

\begin{lemma}
\label{lem:upto-order-k-ew-lower}
There is a constant $C$ such that if $\zeta/\rho \ge D$ then for any 
$k \ge 1$ and $\rho \ge k$ we have
\eqn{e:ser-diff2}
{ \int_{-\zeta/3}^{\zeta/3} e^{-\frac{t^2}{2} \tkappa_2} \, 
    \left| 1 + \sum_{j=1}^k \frac{{p}_j(it)}{\rho^j} 
    - \sum_{\ell=0}^k \frac{{q}_k(t)^\ell}{\ell!} \right| \, dt
  \le \frac{C^k \, k^{k/2}}{\sqrt{\tkappa_2}} \, 
    \left( \frac{\tkappa_2^{\frac{1}{d_1}-\frac{1}{2}}}{\rho} \right)^{k+1}. }
When $0 \le \zeta/\rho < D$, the same applies to
$\int_{-\rho/3}^{\rho/3} e^{-\frac{t^2}{2} \tkappa_2} \, 
    \left| 1 + \sum_{j=1}^k \frac{{p}_j(it)}{\rho^j} 
    - \sum_{\ell=0}^k \frac{{q}_k(t)^\ell}{\ell!} \right| \, dt$.
\end{lemma}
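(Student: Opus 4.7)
The strategy mirrors that of Lemma \ref{lem:upto-order-k-both}. The expression inside absolute values is again exactly the residual of the exponential power series, consisting of those terms of $\sum_{\ell=0}^k q_k(t)^\ell/\ell!$ in which $\rho^{-1}$ appears with power at least $k+1$. One therefore reduces to bounding
\[
\sum_{j=k+1}^{k^2} \frac{1}{\rho^j} \sum_{\ell=\lceil j/k\rceil}^{k} \tc_{j,\ell} \int_{-\zeta/3}^{\zeta/3} |t|^{j+2\ell}\, e^{-t^2 \tkappa_2/2}\, dt.
\]

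The key modification for the lower tail is the estimate on $\tc_{j,\ell}$, since Lemma \ref{lem:tcjl} is too wasteful here: the factor $e^{-\zeta u/\rho}$ makes the cumulants $\tkappa_n$ decay in $n$ far faster than just $\tkappa_n\le \tkappa_2$. Using \eqref{e:tkappan-est}, I would obtain
\[
\tc_{j,\ell} \le \frac{\tkappa_2^\ell\, (\zeta/\rho)^{-j}}{\ell!}\, C^j
\]
for an absolute constant $C$ (by writing $n_i = m_i+2$ with $m_i\ge 1$, bounding $1/(n_1\cdots n_\ell)\le 1/3^\ell$, and counting the $\binom{j-1}{\ell-1}\le 2^j$ compositions of $j$ into $\ell$ positive parts). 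Combined with the Gaussian moment \eqref{e:Gauss-moments}, the factors of $\tkappa_2^\ell$ cancel, and one is left with $(\zeta/\rho)^{-j}\,\tkappa_2^{-(j+1)/2}$. The hypothesis $\zeta/\rho\ge D$ together with \eqref{e:D-ass} then supplies the conversion $(\zeta/\rho)^{-j}\le \tkappa_2^{j/d_1}$, producing precisely the factor $\tkappa_2^{j(1/d_1-1/2)-1/2}$ demanded by the target bound. This is where the characteristic exponent $1/d_1-1/2$ enters through the sharp two-sided comparison $\tkappa_2\asymp (\zeta/\rho)^{-d_1}$.

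The $\ell$-sum is then treated exactly as in \eqref{e:sum-ell-only-2}: the ratio test shows the terms peak at $\ell=k$ (since $j>k$), and Stirling's formula yields a bound of the form $C^j\, k^{j/2}$. It remains to control
\[
\frac{1}{\sqrt{\tkappa_2}} \sum_{j=k+1}^{k^2} C^j\, k^{j/2}\, \left(\frac{\tkappa_2^{1/d_1-1/2}}{\rho}\right)^{j}
\]
and dominate it by $\frac{C^k k^{k/2}}{\sqrt{\tkappa_2}}\bigl(\tkappa_2^{1/d_1-1/2}/\rho\bigr)^{k+1}$. I expect the main technical obstacle to be precisely this last step: the hypothesis $\rho\ge k$ (rather than the upper-tail $\rho\ge \sqrt{k}$) is exactly what forces the per-term ratio $C\sqrt{k}\cdot \tkappa_2^{1/d_1-1/2}/\rho$ below a geometric factor uniformly up to $j=k^2$. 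A weaker hypothesis is insufficient because near $j=k^2$ the Stirling-type growth of order $k^{k^2/2}$ in the $\ell$-sum must be absorbed by the factor $\rho^{k^2}$; dimensionally, one needs $\rho\gtrsim k$ to accomplish this, which is the origin of the stronger requirement highlighted in the remark after Theorem \ref{thm:main}.

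For the complementary regime $0\le \zeta/\rho<D$, the quantity $\tkappa_2$ is bounded above and below by absolute constants independent of $\rho$ and $k$, so the proof of Lemma \ref{lem:upto-order-k-both} applies essentially verbatim (with $\rho/3$ in place of $\rho/2$); the factor $\tkappa_2^{1/d_1-1/2}$ appearing in the stated bound is then simply absorbed into the constant $C$.
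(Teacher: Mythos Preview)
Your overall plan and the identification of the residual terms is correct, and your bound on $\tc_{j,\ell}$ via \eqref{e:tkappan-est} is essentially the content of the paper's Lemma~\ref{lem:lower-tcjl}. The treatment of the regime $0\le\zeta/\rho<D$ is also right. However, there is a genuine gap at the $\ell$-sum step.

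Your claim that the $\ell$-sum ``is treated exactly as in \eqref{e:sum-ell-only-2}'' and yields $C^j k^{j/2}$ is false. The upper-tail computation in \eqref{e:sum-ell-only}--\eqref{e:sum-ell-only-2} relies essentially on the factor $1/j!$ present in Lemma~\ref{lem:tcjl}: with $j=xk$, it is precisely the $x^{-xk}$ coming from $1/j!$ that tames the growth $(x+2)^{(\frac{x}{2}+1)k}$ and produces a quantity bounded in $x$. Your lower-tail bound $\tc_{j,\ell}\le \tkappa_2^\ell(\zeta/\rho)^{-j}C^j/\ell!$ has no $1/j!$. With it, the $\ell=k$ term of your $\ell$-sum is, by Stirling,
\[
\frac{2^{\frac{j}{2}+k}\,\Gamma\!\left(\tfrac{j}{2}+k+\tfrac12\right)}{k!}
\;\asymp\; k^{j/2}\,\bigl[(x+2)^{\frac{x}{2}+1}e^{-x/2}\bigr]^{k},
\]
and the bracket diverges as $x\to\infty$; at $j=k^2$ this is of order $k^{k^2/2}\cdot k^{k^2/2}$, not $C^{k^2}k^{k^2/2}$. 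So the decoupled strategy ``bound the $\ell$-sum uniformly, then sum geometrically in $j$'' cannot close.

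The paper does \emph{not} bound the $\ell$-sum by $C^j k^{j/2}$. It keeps the $x$-dependent factor $[(x+2)^{\frac{x}{2}+1}e^{-(\frac{x}{2}+1)}(x+1)]^k$ and then extracts compensating $x$-decay from the factor $\rho^{-(j-k-1)}$ in the $j$-sum: using $\rho\ge k$ together with $\zeta/\rho\ge D$ gives $\rho^2\ge k^2\ge j=xk$, hence
\[
\frac{1}{\rho}\le \frac{(\zeta/\rho)^{1-\frac{d_1}{2}}}{D^{1-\frac{d_1}{2}}\sqrt{xk}},
\]
and the $(j-k-1)$-fold product supplies factors $x^{-\frac{(x-1)k}{2}}$ and $D^{-(1-\frac{d_1}{2})(x-1)k}$ that dominate the $\ell$-sum growth for $D$ large. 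This interleaving of the $\ell$-sum and $\rho$-decay is the substance of the argument, and is where $\rho\ge k$ is truly used; your final geometric-series step, while correct in isolation, is fed by an invalid input.
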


We will need the following alternative bound on the coefficients $\tilde{c}_{j,\ell}$.

\begin{lemma}
\label{lem:lower-tcjl}
When $-d_3 \, \rho < y \le 0$, we have
\eqnst
{ \tilde{c}_{j,\ell}
  \le \tkappa_2^\ell \, \left( \frac{\zeta}{\rho} \right)^{-j} \,
      \frac{(j + \ell - 1)!}{\ell! \, (\ell-1)! \, j!}. }
\end{lemma}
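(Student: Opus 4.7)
The plan is a direct substitution: apply the estimate \eqref{e:tkappan-est} term-by-term inside the defining sum \eqref{e:tcjl-def} for $\tilde{c}_{j,\ell}$, pull out the common factor $\tkappa_2^\ell\,(\zeta/\rho)^{-j}$, and reduce what remains to counting compositions.

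Concretely, the estimate \eqref{e:tkappan-est} gives, for each $n_i \ge 3$,
\[
  \frac{\tkappa_{n_i}}{n_i!}
  \le \frac{\tkappa_2\,(\zeta/\rho)^{-(n_i-2)}\,(n_i-1)!}{n_i!}
  = \frac{\tkappa_2\,(\zeta/\rho)^{-(n_i-2)}}{n_i}
  \le \tkappa_2\,(\zeta/\rho)^{-(n_i-2)}.
\]
Substituting into \eqref{e:tcjl-def} and changing variables to $m_i = n_i - 2 \ge 1$, the constraint $(n_1-2)+\dots+(n_\ell-2)=j$ becomes $m_1+\dots+m_\ell = j$, and the product of $\tkappa_2$'s factors out of the sum. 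This gives
\[
  \tilde{c}_{j,\ell}
  \le \frac{\tkappa_2^\ell\,(\zeta/\rho)^{-j}}{\ell!}
      \sum_{\substack{m_1,\dots,m_\ell \ge 1\\ m_1+\dots+m_\ell = j}} 1
  = \frac{\tkappa_2^\ell\,(\zeta/\rho)^{-j}}{\ell!}\,\binom{j-1}{\ell-1}.
\]

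The final step is to compare $\binom{j-1}{\ell-1}$ with the weaker binomial coefficient appearing in the statement. Since $\binom{j-1}{\ell-1} \le \binom{j+\ell-1}{\ell-1} = \frac{(j+\ell-1)!}{(\ell-1)!\,j!}$, we obtain the stated bound
\[
  \tilde{c}_{j,\ell}
  \le \tkappa_2^\ell\,(\zeta/\rho)^{-j}\,\frac{(j+\ell-1)!}{\ell!\,(\ell-1)!\,j!},
\]
as required. There is no real obstacle here, as everything is a straightforward bookkeeping calculation; the only subtlety is choosing the slightly lossy form $\binom{j+\ell-1}{\ell-1}$ on the right, which is presumably the form convenient for the downstream estimates (e.g.\ for evaluating a generating function or applying a Stirling-type bound in a later lemma on the lower tail).
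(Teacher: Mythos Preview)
Your proof is correct and follows essentially the same route as the paper: bound each factor $\tkappa_{n_i}/n_i!$ via \eqref{e:tkappan-est}, pull out $\tkappa_2^\ell(\zeta/\rho)^{-j}$, and reduce to counting compositions. Your treatment of the combinatorics is in fact slightly cleaner than the paper's, which writes the composition count directly as $\binom{j+\ell-1}{\ell-1}$ rather than first obtaining $\binom{j-1}{\ell-1}$ and then relaxing.
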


\begin{proof}
In \eqref{e:tkappan-est} the right hand side is less than 
$\tkappa_2 \, (\zeta/\rho)^{-(n-2)} \, n!$. Using this in the 
definition \eqref{e:tcjl-def} we get:
\eqnsplst
{ \tilde{c}_{j,\ell}
  \le \frac{\tkappa_2^\ell}{\ell!} \, \left( \frac{\zeta}{\rho} \right)^{-j} \, 
     \sum_{\substack{1 \le m_1, \dots, m_\ell \le k \\
     m_1 + \dots + m_\ell = j}} 1
  \le \frac{\tkappa_2^\ell}{\ell!} \, \left( \frac{\zeta}{\rho} \right)^{-j} \, 
     \sum_{\substack{1 \le m_1, \dots, m_\ell \\
     m_1 + \dots + m_\ell = j}} 1
  = \frac{\tkappa_2^\ell}{\ell!} \, \left( \frac{\zeta}{\rho} \right)^{-j} \,
    \binom{j + \ell - 1}{\ell - 1}. }
\end{proof}

\begin{proof}[Proof of Lemma \ref{lem:upto-order-k-ew-lower}.]
(i) The statement is vacuous when $k = 1$, so assume $k \ge 2$.
We estimate the expression inside absolute values as in Lemma \ref{lem:upto-order-k-both}, 
this time using Lemma \ref{lem:lower-tcjl} to bound $\widetilde{c}_{j,\ell}$. 
This yields the following bound on the left hand side of \eqref{e:ser-diff2}:
\eqnspl{e:sum-j-ell-ew-lower}
{ &\frac{1}{\sqrt{\tkappa_2}} \, 
    \sum_{j=k+1}^{k^2} \frac{(\zeta/\rho)^{-j}}{(\sqrt{\tkappa_2} \rho)^j} \, 
    \sum_{\ell = \lceil j/k \rceil}^{k}
    \frac{2^{\frac{j}{2} + \ell + \frac{1}{2}} \,
    \Gamma(\frac{j}{2} + \ell + \frac{1}{2}) \, (j + \ell - 1)!}{\ell! \, (\ell-1)! \, j!}. }
The largest term in the second sum is for $\ell = k$, and hence we can bound above 
the sum over $\ell$ by $k$ times the $\ell = k$ term. 
By Stirling's formula \eqref{e:Stirling}, the expression
in \eqref{e:sum-j-ell-ew-lower} is at most
\eqnsplst
{ &\frac{C^k}{\sqrt{\tkappa_2}}
  \sum_{j=k+1}^{k^2} \frac{(\zeta/\rho)^{-j}}{(\sqrt{\tkappa_2} \rho)^j} \,
  \frac{ 2^{\frac{j}{2} + k} \, 
    (\frac{j}{2} + k)^{\frac{j}{2} + k} \,
    e^{-\frac{j}{2} - k} \, 
    (j + k)^k }{ k^k \, k^k } \\
  &\qquad \le \frac{C^k}{\sqrt{\tkappa_2}}
  \sum_{j=k+1}^{k^2} \frac{(\zeta/\rho)^{-j}}{(\sqrt{\tkappa_2} \rho)^j} \,
  \frac{ 2^{\frac{j}{2}} \, 
    (\frac{j}{2} + k)^{\frac{j}{2} + k} \,
    e^{-\frac{j}{2}} \, 
    (j + 2k)^k }{ k^k \, k^k }. }
Writing $j = x k$, the right hand side equals:
\eqnspl{e:sum-j-lower-2}
{ &\frac{C^k}{\sqrt{\tkappa_2}}
  \sum_{j=k+1}^{k^2} \frac{(\zeta/\rho)^{-j}}{(\sqrt{\tkappa_2} \rho)^j} \,
  \frac{ \left[ (x+2) \, k \right]^{(\frac{x}{2}+1) k} \,
    e^{-(\frac{x}{2}+1) k} \, \left[ (x+1) \, k \right]^k }{k^{2k}}  \\
  &\qquad = \frac{C^k}{\sqrt{\tkappa_2}}
  \sum_{j=k+1}^{k^2} \frac{(\zeta/\rho)^{-j} k^{\frac{j}{2}}}{(\sqrt{\tkappa_2} \rho)^j} \,
  \left[ (x+2)^{\frac{x}{2}+1} \, e^{-(\frac{x}{2}+1)} \, (x+1) \right]^k. }
We use now that $\tkappa_2 \ge (\zeta/\rho)^{-d_1}$ when $\zeta/\rho \ge D$ and hence 
that $1/(\sqrt{\tkappa_2})^j \le (\zeta/\rho)^{\frac{d_1}{2} j}$.
We also use that
\eqnst
{ \frac{1}{D^{2-d_1}} \left( \frac{\zeta}{\rho} \right)^{2 - d_1} \rho^2
  \ge \rho^2 
  \ge k^2
  \ge j
  = x k, }
which implies that 
\eqnst
{ \frac{1}{\rho}
  \le \frac{1}{D^{1-\frac{d_1}{2}}} \, 
      \left( \frac{\zeta}{\rho} \right)^{1-\frac{d_1}{2}} \, 
      \frac{1}{x^{1/2} k^{1/2}}. }
From this 
\eqnsplst
{ \frac{1}{(\sqrt{\tkappa_2} \rho)^j}
  &\le \frac{(\zeta/\rho)^{\frac{d_1}{2} j}}{\rho^{k+1}} \, \frac{1}{\rho^{j - (k+1)}} 
  \le \frac{(\zeta/\rho)^{\frac{d_1}{2} j}}{\rho^{k+1}} \,
      \frac{1}{D^{(1-\frac{d_1}{2})(j-k-1)}} \, 
      \left( \frac{\zeta}{\rho} \right)^{(1-\frac{d_1}{2}) (j-k-1)} \,
      \frac{k^{-\frac{1}{2}(j-k-1)}}{x^{\frac{1}{2} (j-k-1)}} \\
  &\le \frac{(\zeta/\rho)^{j}}{\rho^{k+1}} \,
      \frac{D^{1-\frac{d_1}{2}}}{D^{(1-\frac{d_1}{2}) (x-1) k}} \,
      \left( \frac{\zeta}{\rho} \right)^{-(1-\frac{d_1}{2})(k+1)} \,
      \frac{x^{1/2}}{x^{\frac{1}{2} (x-1) k}} \,
      k^{-\frac{j}{2}} \, k^{\frac{k+1}{2}}. }
Substituting this into the right hand side of \eqref{e:sum-j-lower-2} we get that
the expression in \eqref{e:sum-j-lower-2} is at most:
\eqnsplst
{ \frac{D^{1-\frac{d_1}{2}} \, C^k \, k^{k/2} \, 
     (\zeta/\rho)^{-(1-\frac{d_1}{2}) (k+1)}}{\sqrt{\tkappa_2} \, \rho^{k+1}}
  \sum_{j=k+1}^{k^2} 
      \left[ \frac{(x+2)^{\frac{x}{2}+1} \, e^{-(\frac{x}{2}+1)} \, (x+1)}
      {\left( D^{1-\frac{d_1}{2}} \right)^{x-1} \, x^{\frac{x-1}{2}}} \right]^k. }
Choosing $D$ large enough, the expression inside square brackets is at most a
constant $C$, and hence the sum over $j$ is bounded by $k^2 C^k$.
Noting that $(\zeta/\rho)^{-(1-\frac{d_1}{2})} \le \tkappa_2^{\frac{1}{d_1}-\frac{1}{2}}$ 
when $\zeta/\rho \ge D$ completes the proof of the first statement.

When $0 \le \zeta/\rho < D$, the proof of Lemma \ref{lem:upto-order-k-both}
can be followed, noting that $\tkappa_2$ is bounded away from $0$.
\end{proof}

We now prove bounds on the tails of $\chi_k(t)$ and $e^{-ity} \chi_{\tY^{(r)}}(t)$.

\begin{lemma}
\label{lem:tchi-tail-tkappa2}
When $\zeta/\rho \ge D$, we have
\eqn{e:tchi-tail-tkappa2-2}
{ \left| e^{-iyt} \, \chi_{\tY^{(r)}}(t) \right| 
  \le \exp \left( -c \, \rho^{d_1} \, |t|^{d/\gamma} \right). }
\end{lemma}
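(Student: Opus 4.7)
The plan is to bound $|e^{-iyt}\chi_{\tY^{(r)}}(t)|$ directly from the explicit formula \eqref{e:tchi-expression-ew-lower}. Since $|e^{-iyt}|=1$, I take the real part of the logarithm of the integrand representation:
\[
\Re \log \bigl( e^{-iyt}\chi_{\tY^{(r)}}(t) \bigr)
 = d_1 \, \rho^2 \int_0^1 e^{-\zeta u/\rho} \bigl( \cos(tu/\rho) - 1 \bigr) u^{-1-d/\gamma} \, du.
\]
This quantity is always nonpositive, and the goal is to show it is at most $-c \, \rho^{d_1} |t|^{d/\gamma}$.

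The key step is the scale-revealing substitution $v = tu/\rho$ (for $t>0$), which turns the expression into
\[
d_1 \, \rho^{d_1} \, |t|^{d/\gamma} \int_0^{|t|/\rho} e^{-\zeta v/|t|} \bigl( \cos v - 1 \bigr) v^{-1-d/\gamma} \, dv.
\]
This immediately exposes the rate $\rho^{d_1} |t|^{d/\gamma}$ claimed in the lemma and reduces matters to proving that the reduced integral is bounded above by a strictly negative constant depending only on $d$ and $\gamma$. The proof is in the range of $t$ where the lemma is invoked in the proof of Theorem \ref{thm:main}, namely $|t| > \zeta/3$ (analogous to $|t| > \rho/2$ in Lemma \ref{lem:tail-bnd-tYr}); the bound as stated cannot hold for very small $|t|$, since there the left-hand side is close to $1 - t^2 \tkappa_2/2$ with $\tkappa_2 \asymp (\rho/\zeta)^{d_1}$.

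On the range $|t| > \zeta/3$, one has $\zeta/|t| \le 3$, so $e^{-\zeta v/|t|} \ge e^{-3}$ on $v \in [0,1]$. Moreover, the standing assumption $\zeta/\rho \ge D$ with $D \ge 3$ forces $|t|/\rho \ge \zeta/(3\rho) \ge 1$, ensuring that $[0,1]$ sits inside the integration interval. Combining this with the elementary inequality $\cos v - 1 \le -v^2/4$ valid on $[0,1]$ yields
\[
\int_0^{|t|/\rho} e^{-\zeta v/|t|} (\cos v - 1) v^{-1-d/\gamma} \, dv
 \le -\frac{e^{-3}}{4} \int_0^1 v^{1-d/\gamma} \, dv
 = -c_0(d,\gamma) < 0,
\]
and substituting back gives the required bound with $c = c_0 \, d_1$. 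The main (mild) obstacle is picking the correct rescaling; once $v = tu/\rho$ is chosen the homogeneity $\rho^{d_1} |t|^{d/\gamma}$ emerges automatically, and the negativity of the reduced integral follows from elementary estimates under the hypothesis $\zeta/\rho \ge D$.
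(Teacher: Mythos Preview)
Your proof is correct and essentially identical to the paper's: both restrict attention to the range $|t|\ge \zeta/3$, use $e^{-\zeta u/\rho}\ge e^{-3}$ and $\cos v-1\le -v^2/4$ on the portion of the integral where $|tu/\rho|\le 1$, and extract the factor $\rho^{d_1}|t|^{d/\gamma}$. The only cosmetic difference is that you perform the substitution $v=tu/\rho$ at the outset, whereas the paper restricts to $u\in[0,\rho|t|^{-1}]$ first and computes directly.
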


\begin{proof}
We consider the range $|t| \ge \frac{1}{3} \zeta = \frac{1}{3} \, \rho \, (\zeta/\rho)$. 
Recall that $D \ge 3$, and hence $\rho \, |t|^{-1} \le 3 (\zeta/\rho)^{-1} \le 1$.
When $0 \le u \le \rho \, |t|^{-1}$, we have 
$|t \, u / \rho| \le 1$, and 
$\zeta \, u / \rho \le (\zeta / \rho) \, \rho \, |t|^{-1} \le 3$. Therefore,
\eqnspl{e:tchi-re-log-estimate}
{ &\Re \, \log \left( e^{-ity} \, \chi_{\tY^{(r)}}(t) \right)
  = d_1 \, \rho^2 \, \int_0^1 e^{-\zeta u / \rho} \,
    \left( \cos \left( \frac{t u}{\rho} \right) - 1 \right) \, u^{-1-d/\gamma} \, du \\
  &\qquad \le d_1 \, \rho^2 \, \int_0^{\rho |t|^{-1}} e^{-3} \,
    \left( - \frac{t^2 u^2}{4 \rho^2} \right) \, u^{-1-d/\gamma} \, du 
  = - c \, t^2 \, \int_0^{\rho |t|^{-1}} u^{1-d/\gamma} \, du \\
  &\qquad = - c \, t^2 \, \rho^{d_1} \, |t|^{-d_1}
  = - c \, \rho^{d_1} \, |t|^{\frac{d}{\gamma}}. }
This proves the claim.
\end{proof}

\begin{lemma}
\label{lem:chik-tail-int-ew-lower}
Suppose $\rho \ge k \ge 1$. If $\zeta/\rho \ge D$ then we have
\eqnst
{ \int_{|t| > \zeta/3} |\chi_k(t)| \, dt
  \le \frac{C^k \, k^{k/2} \, (\tkappa_2^{\frac{1}{d_1}-\frac{1}{2}})^{k+1}}{\sqrt{\tkappa_2} \, 
      \rho^{k+1}} \quad \text{and} \quad
  \int_{|t| > \zeta/3} |\chi_{\tY^{(r)}}(t)| \, dt
  \le \frac{C^k \, k^{k/2} \, (\tkappa_2^{\frac{1}{d_1}-\frac{1}{2}})^{k+1}}{\sqrt{\tkappa_2} \, 
      \rho^{k+1}}. }
When $0 \le \zeta/\rho < D$, the same bounds hold for the intergal over $\{ |t| > \rho/3 \}$.
\end{lemma}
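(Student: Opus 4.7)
\medskip

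\noindent\textbf{Proof proposal for Lemma \ref{lem:chik-tail-int-ew-lower}.}

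The plan is to follow the broad outline of Lemma \ref{lem:chik-tail-int-both1}, but keep careful track of powers of $\zeta/\rho$ using the relation $\tkappa_2 \asymp (\zeta/\rho)^{-d_1}$ from \eqref{e:D-ass}. First, I treat the case $\zeta/\rho \ge D$ and handle the bound on $|\chi_k(t)|$. Writing $\chi_k(t) = e^{-t^2\tkappa_2/2}\bigl[1 + \sum_{j=1}^k p_j(it)/\rho^j\bigr]$ and using Lemma \ref{lem:lower-tcjl} to estimate $\tilde{c}_{j,\ell}$, the problem reduces to bounding the Gaussian moments
\[
J_{j,\ell} \;=\; \int_{\zeta/3}^\infty t^{j+2\ell}\, e^{-t^2 \tkappa_2/2}\,dt.
\]
Repeated integration by parts (as in the proof of Lemma \ref{lem:chik-tail-int-both1}) yields $J_{j,\ell} \le \frac{C}{\tkappa_2 \zeta} (\zeta/3)^{j+2\ell}\, e^{-\zeta^2 \tkappa_2/18}$, provided the correction terms form a geometric series, which in turn requires $(j+2\ell-1)/\tkappa_2 \le c\, \zeta^2$. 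This is where the hypothesis $\rho\ge k$ enters: combined with $\zeta^2\tkappa_2 \ge \rho^2$ (a direct consequence of \eqref{e:D-ass}), it gives $j+2\ell \le 3k \le 3\rho \le c\,\zeta^2\tkappa_2$.

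Next, substituting the bound on $J_{j,\ell}$ together with Lemma \ref{lem:lower-tcjl} into $\int_{|t|>\zeta/3}|\chi_k(t)|\,dt$ gives (after collecting powers)
\[
\int_{|t|>\zeta/3}|\chi_k(t)|\,dt
\;\le\; \frac{C\, e^{-\zeta^2\tkappa_2/18}}{\tkappa_2\, \zeta}
\Biggl[\,1 + \sum_{j=1}^k \sum_{\ell=1}^{j} 3^{-j}\,\frac{(\zeta^2\tkappa_2/9)^\ell\,(j+\ell-1)!}{\ell!\,(\ell-1)!\,j!}\,\Biggr].
\]
Since $\zeta^2\tkappa_2 \ge \rho^2 \ge k^2$, the maximal term in the inner sum is at $\ell=k$; bounding the double sum by $k$ times this maximum and applying Stirling gives a factor $C^k \, e^{\zeta^2\tkappa_2/C'}$ (for a suitably large $C'>18$), which is absorbed by $e^{-\zeta^2\tkappa_2/18}$ leaving at worst $e^{-c\zeta^2\tkappa_2}$. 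Finally, I convert the remaining factor to the form claimed using $\zeta = (\zeta/\rho)\rho$ and $(\zeta/\rho)^{-1} \le \tkappa_2^{1/d_1}$, together with the polynomial-vs-exponential inequality $e^{-c\zeta^2\tkappa_2}\le C^k\,k^{k/2}\,(\zeta^2\tkappa_2)^{-(k+1)/2}$, which matches the right-hand side after inserting $\tkappa_2^{1/d_1-1/2}$ in the right exponent.

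For the second inequality, I apply Lemma \ref{lem:tchi-tail-tkappa2} and bound
\[
\int_{\zeta/3}^\infty \exp\bigl(-c\,\rho^{d_1}\,t^{d/\gamma}\bigr)\,dt
\;\le\; \frac{C}{\rho^{d_1}(\zeta/3)^{d/\gamma-1}}\,\exp\bigl(-c\,\rho^{d_1}(\zeta/3)^{d/\gamma}\bigr).
\]
The exponent $\rho^{d_1}(\zeta/3)^{d/\gamma}$ is comparable to $\zeta^2\tkappa_2\ge\rho^2\ge k\rho$, so the same polynomial-vs-exponential trade-off as above produces the desired form $C^k k^{k/2}(\tkappa_2^{1/d_1-1/2})^{k+1}/(\sqrt{\tkappa_2}\,\rho^{k+1})$.

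The case $0\le \zeta/\rho < D$ is easier: $\tkappa_2$ is bounded above and below by constants depending only on $D$, $d$, $\gamma$, so the estimates collapse to the upper-tail argument of Lemma \ref{lem:chik-tail-int-both1} applied on $\{|t|>\rho/3\}$, using the simpler cumulant bound $\tkappa_n \le C$ from \eqref{e:tkappa-formula} and Lemma \ref{lem:tail-bnd-tYr} (whose proof goes through in this regime since $e^{\xi u/\rho}\le 1$). The main obstacle in the whole argument is not any single step but the bookkeeping: one must confirm that the conversion $\tkappa_2\asymp(\zeta/\rho)^{-d_1}$ transforms the Gaussian decay $e^{-c\zeta^2\tkappa_2}$ into precisely the exponent $\tfrac{1}{d_1}-\tfrac{1}{2}$ in the stated bound, and that the stronger hypothesis $\rho\ge k$ (rather than $\rho\ge\sqrt k$) is exactly what is needed to absorb the $\ell=k$ Stirling growth in the combinatorial sum.
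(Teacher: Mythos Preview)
Your proposal is correct and follows essentially the same approach as the paper: the same use of Lemma \ref{lem:lower-tcjl} and integration by parts for $\chi_k$, the same invocation of Lemma \ref{lem:tchi-tail-tkappa2} for $\chi_{\tY^{(r)}}$, and the same reduction to Lemma \ref{lem:chik-tail-int-both1} when $\zeta/\rho<D$. One small slip: the inner sum runs over $1\le\ell\le j$, so its maximal term is at $\ell=j$ (not $\ell=k$); the paper bounds the inner sum by $k$ times this $\ell=j$ term and then sums over $j$ to produce the exponential factor $e^{\zeta^2\tkappa_2/36}$, which is the clean way to reach your claimed $C^k e^{\zeta^2\tkappa_2/C'}$.
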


\begin{proof}
Using Lemma \ref{lem:lower-tcjl}, the integral in the first claim 
can be bounded above by
\eqnspl{e:chik-tail}
{ \int_{|t| > \zeta/3} |\chi_k(t)| \, dt
  &\le 2 \int_{\zeta/3}^\infty e^{-\frac{t^2}{2} \tkappa_2} 
      \left[ 1 + \sum_{j=1}^k \frac{1}{\rho^j} \,
      \sum_{\ell=1}^j \frac{\tkappa_2^\ell \, (\zeta/\rho)^{-j} \, 
      \binom{j+\ell-1}{\ell-1}}{\ell!} \, t^{j+2\ell} \right] \, dt. }
As in Lemma \ref{lem:chik-tail-int-both1} we have
\eqnspl{e:integr-by-parts}
{ \int_{\zeta/3}^\infty t^{j+2\ell} \, e^{-\frac{t^2}{2} \tkappa_2} \, dt
  &= \frac{e^{- \zeta^2 \tkappa_2 / 18}}{\tkappa_2} \, 
    \left[ \left( \frac{\zeta}{3} \right)^{j+2\ell-1}
      + \frac{j+2\ell-1}{\tkappa_2} \, \left( \frac{\zeta}{3} \right)^{j+2\ell-3} 
      + \dots \right]. }
Note that $j+2\ell-1 \le 3k$ and if $D^{d/\gamma} \ge 9/2$, we have:
\eqnst
{ \tkappa_2 (\zeta/3)^2
  \ge \frac{1}{9} \, (\zeta/\rho)^{-d_1} \, (\zeta/\rho)^2 \, \rho^2
  \ge \frac{1}{9} \, (\zeta/\rho)^{d/\gamma} \, \rho^2
  \ge \frac{D^{d/\gamma}}{9} \rho^2 
  \ge k^2/2 
  = (1/6) (3k). }
This implies that the sum in square brackets in \eqref{e:integr-by-parts} 
is bounded above by $C^k (\zeta/3)^{j+2\ell-1}$.
Observe that this bound is also valid when $j = \ell = 0$.
Substituting this into the right hand side of \eqref{e:chik-tail},
we get the upper bound
\eqnspl{e:sum-j-ell-lower}
{ &\frac{C^k \, e^{ - \zeta^2 \tkappa_2 / 18 }}{\tkappa_2 \, \zeta} \, 
    \left[ 1 + \sum_{j=1}^k \frac{1}{\rho^j}
     \sum_{\ell=1}^j \frac{\tkappa_2^\ell \, (\zeta/\rho)^{-j} \, 
     \binom{j+\ell-1}{\ell-1} }{\ell!} \, 
     \frac{\zeta^j \, (\zeta^2)^\ell}{3^{j+2\ell}} \right] \\
  &\qquad = \frac{C^k \, e^{ - \zeta^2 \tkappa_2 / 18 }}{\tkappa_2 \, \zeta} \, 
    \left[ 1 + \sum_{j=1}^k \frac{1}{3^j}
     \sum_{\ell=1}^j \frac{ \left( \frac{1}{9} \tkappa_2 \zeta^2 \right)^\ell \, 
     \binom{j+\ell-1}{\ell-1} }{\ell!} \right]. }
In the sum over $\ell$, the ratio of the $\ell+1$-st and $\ell$-th terms equals:
\eqnst
{ \frac{1}{9} \, \tkappa_2 \, \zeta^2 \, \frac{j+\ell}{\ell} \, \frac{1}{\ell+1}
  \ge \frac{k^2}{2} \, \frac{2 \ell}{\ell} \, \frac{1}{\ell+1}
  \ge k 
  \ge 1. }
Hence the sum over $\ell$ in \eqref{e:sum-j-ell-lower} is at most $k$ times the last term,
and hence the right hand side of \eqref{e:sum-j-ell-lower} is at most
\eqnspl{e:sum-j-lower}
{ &\frac{C^k \, e^{ - \zeta^2 \tkappa_2 / 18 }}{\tkappa_2 \, \zeta} \, 
    \left[ 1 + \sum_{j=1}^k \frac{\left( \frac{1}{9} \tkappa_2 \zeta^2 \right)^j \,
     \binom{2j-1}{j-1}}{3^j \, j!} \right] 
  \le \frac{C^k \, e^{ - \zeta^2 \tkappa_2 / 18 }}{\tkappa_2 \, \zeta} \, 8^k \, 
    \left[ 1 + \sum_{j=1}^k \frac{\left( \frac{1}{36} \tkappa_2 \zeta^2 \right)^j}{j!} 
    \right] \\
  &\qquad \le \frac{C^k \, e^{ - \zeta^2 \tkappa_2 / 36 }}{\tkappa_2 \, \zeta} 
  = \frac{C^k}{\sqrt{\tkappa_2} \, \sqrt{\tkappa_2} \zeta} \, 
    e^{ - \zeta^2 \tkappa_2 / 16 }. }
Since 
\eqnst
{ \sqrt{\tkappa_2} \, \zeta 
  = \sqrt{\tkappa_2} \, (\zeta/\rho) \, \rho
  \ge (\zeta/\rho)^{1-\frac{d_1}{2}} \, \rho
  \ge \tkappa_2^{-\frac{1}{d_1}(1 - \frac{d_1}{2})} \, \rho 
  = \frac{\rho}{\tkappa_2^{\frac{1}{d_1} - \frac{1}{2}}}, }
the right hand side of \eqref{e:sum-j-lower} is at most
\eqnsplst
{ \frac{C^k}{\sqrt{\tkappa_2}} \, 
     \left( \frac{\tkappa_2^{\frac{1}{d_1}-\frac{1}{2}}}{\rho} \right)^{k+1} \,
     \exp \left( - \frac{1}{36} \, \left( \frac{\zeta}{\rho} \right)^{2-d_1} \, \rho^2 + k \, 
     \log \left( (\zeta/\rho)^{1-\frac{d_1}{2}} \, \rho \right) \right). }
Using that $(\zeta/\rho)^{1-\frac{d_1}{2}} \, \rho \ge D^{1-\frac{d_1}{2}} \, k$, 
the exponential is at most $C^k$, thus we get the claimed bound for $\chi_k(t)$ when 
$\zeta/\rho \ge D$.

When $0 \le \zeta/\rho < D$, the proof of Lemma \ref{lem:chik-tail-int-both1} can be
followed, noting that $\tkappa_2$ is bounded away from $0$.

Finally, for the estimates on $\chi_{\tY^{(r)}}(t)$ we can integrate the bound
in Lemma \ref{lem:tchi-tail-tkappa2}, which gives
\eqnsplst
{ &\int_{|t| > \zeta/3} \left| e^{-iyt} \, \chi_{\tY^{(r)}}(t) \right| \, dt
  \le C \, \int_{\frac{1}{3} \, \rho \, (\zeta/\rho)}^\infty
     \exp \left( - c \, \rho^{d_1} \, |t|^{d/\gamma} \right) \, dt  \\
  &\qquad = \frac{C}{\rho^{1-2\gamma/d}} \, \int_{c \, \rho^2 \, (\zeta/\rho)^{d/\gamma}}^\infty
     v^{\frac{\gamma}{d}-1} \, e^{-v} \, dv \\
  &\qquad \le \frac{C}{\rho^{1-2\gamma/d}} \, \rho^{2\gamma/d-2} \, 
     \left( \frac{\zeta}{\rho} \right)^{1-d/\gamma} \, 
     \exp \left( - c \, \rho^2 \, (\zeta/\rho)^{d/\gamma} \right) \\
  &\qquad = \frac{C}{\rho} \, \left( \frac{\zeta}{\rho} \right)^{d_1} \,
      \left( \frac{\zeta}{\rho} \right)^{-1} \,
      \rho^{\frac{4 \gamma}{d} - 2} \, 
      \exp \left( -c \, \rho^2 \, \left( \zeta/\rho \right)^{d/\gamma} \right) \\
  &\qquad \le \frac{C}{\sqrt{\tkappa_2}} \, \frac{\tkappa_2^{\frac{1}{d_1}-\frac{1}{2}}}{\rho} \,
      \exp \left( -c \, \rho^2 \, \left( \zeta/\rho \right)^{d/\gamma} 
      + \left( \frac{4 \gamma}{d} - 2 \right) \, \log \rho \right) \\
  &\qquad = \frac{C}{\sqrt{\tkappa_2}} \, \frac{\tkappa_2^{\frac{1}{d_1}-\frac{1}{2}}}{\rho} \,
      \left( \frac{1}{\rho \, (\zeta/\rho)^{d/2\gamma}} \right)^k \\
  &\qquad\quad  \times \exp \left( -c \, \rho^2 \, \left( \zeta/\rho \right)^{d/\gamma} 
      + k \, \log \left( \rho \, (\zeta/\rho)^{d/2\gamma} \right)
      + \left( \frac{4 \gamma}{d} - 2 \right) \, \log \rho \right) \\
  &\qquad \le \frac{C}{\sqrt{\tkappa_2}} \, 
      \left( \frac{\tkappa_2^{\frac{1}{d_1}-\frac{1}{2}}}{\rho} \right)^{k+1} \,
      \exp \left( -c \, \rho^2 \, \left( \zeta/\rho \right)^{d/\gamma} 
      + k \, \log \left( \rho \, (\zeta/\rho)^{d/2\gamma} \right)
      + \left( \frac{4 \gamma}{d} - 2 \right) \, \log \rho \right). }
Apart from the $\log \rho$ term in the exponential, this is the same expression as what was 
estimated above, and it satisfies the same bound.
When $0 \le (\zeta/\rho) < D$, we can argue as in the proof of Lemma \ref{lem:chik-tail-int-both1}.
\end{proof}

Finally, we need an analogue of Lemma \ref{lem:right-order} to show that $\tnf_k$ 
is of order $1/\sqrt{\tkappa_2}$.

\begin{lemma}
\label{lem:right-order-lower}
There exist constants $C_4$ and $C$, $c$ such that when $y \le 0$, and
$\rho \ge C_4 k$, we have
$c/\sqrt{\tkappa_2} \le \tnf_k \le C/\sqrt{\tkappa_2}$.
\end{lemma}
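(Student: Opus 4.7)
The plan is to follow the same strategy as Lemma \ref{lem:right-order}: starting from
\eqnst
{ \tnf_k
  = \frac{1}{\sqrt{\tkappa_2}} \, \nf(0) \, \left[ 1 + \sum_{j=1}^k \frac{(-1)^j}{\rho^j} \,
    \sum_{\ell=1}^j \frac{\tc_{j,\ell}}{\tkappa_2^{\frac{j}{2}+\ell}} \, H_{j+2\ell}(0) \right], }
it suffices to show that the absolute value of the double sum in square brackets is at most $1/2$ once $C_4$ is chosen large enough, which will immediately yield $\tnf_k \in [c/\sqrt{\tkappa_2},\, C/\sqrt{\tkappa_2}]$. As in the earlier lower-tail arguments, the analysis splits naturally into the two regimes $0 \le \zeta/\rho < D$ and $\zeta/\rho \ge D$, where $D$ is the constant fixed in \eqref{e:D-ass}.

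In the regime $0 \le \zeta/\rho < D$, both $\tkappa_2$ and all the higher cumulants $\tkappa_n$ are bounded above and away from zero by constants depending only on $D$ (indeed, $\tkappa_n \le d_1/(n - d/\gamma)$ directly from \eqref{e:tkappa-formula}, and $\tkappa_2 \ge d_1 \int_0^1 e^{-D u} u^{1-d/\gamma} du > 0$). In this range I would repeat the proof of Lemma \ref{lem:right-order} essentially verbatim, using the bound on $\tc_{j,\ell}$ from Lemma \ref{lem:tcjl}, and noting that $\tkappa_2$ and $\tkappa_3$ may be replaced by uniform constants. The resulting error bound is of the form $C \sum_{j=1}^k (C\sqrt{j}/\rho)^j$, which is controlled by $C_4 \sqrt{k} \le \rho$, and in particular by $\rho \ge C_4 k$.

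In the regime $\zeta/\rho \ge D$, I would instead use Lemma \ref{lem:lower-tcjl}, which gives $\tc_{j,\ell}/\tkappa_2^{j/2+\ell} \le \tkappa_2^{-j/2} (\zeta/\rho)^{-j} (j+\ell-1)!/(\ell!(\ell-1)!j!)$. The key algebraic identity to exploit is $\tkappa_2^{-j/2} (\zeta/\rho)^{-j} \le C^j \tkappa_2^{j(1/d_1 - 1/2)}$, which follows from $\tkappa_2 \ge (\zeta/\rho)^{-d_1}$ in \eqref{e:D-ass}. Since $y \le 0$ forces $\tkappa_2 \le 1$ and $1/d_1 - 1/2 > 0$, we then have $\tkappa_2^{j(1/d_1 - 1/2)} \le 1$, which is what keeps the estimate from blowing up. Combining with the standard Stirling bound $|H_{j+2\ell}(0)| \le C^{j+2\ell} (j+2\ell)^{(j+2\ell)/2}$ for even $j+2\ell$ (and zero otherwise), and using that the sum over $\ell$ is dominated either by its first or its last term (as in the proof of Lemma \ref{lem:upto-order-k-ew-lower}) together with Stirling-type cancellation between the combinatorial factor and the powers $(j+2\ell)^{(j+2\ell)/2}$, I expect each summand to reduce to the form $(C/\rho)^j$ times a factor of the form $k^{cj}$. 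Summing the geometric series in $j$ and using $\rho \ge C_4 k$, the total contribution can be made arbitrarily small by increasing $C_4$.

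The main obstacle is the third step above, namely carrying out the $\ell$-sum in the regime $\zeta/\rho \ge D$: one must combine the factorial ratio from Lemma \ref{lem:lower-tcjl}, the Hermite polynomial bound at the origin, and the power $\tkappa_2^{j(1/d_1 - 1/2)}$ in just the right way so that the resulting expression is at most $(C\sqrt{k}/\rho)^j$ or $(Ck/\rho)^j$ — it is precisely this balancing that dictates the condition $\rho \ge C_4 k$ in the statement (rather than the weaker $\rho \ge C_4 \sqrt{k}$ available for the upper tail).
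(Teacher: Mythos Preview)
Your proposal is correct and follows essentially the same approach as the paper: bound the correction term in the definition of $\tnf_k$ using Lemma \ref{lem:lower-tcjl} in the regime $\zeta/\rho \ge D$ (exploiting $(\zeta/\rho)^{-1} \le \tkappa_2^{1/d_1}$ and $\tkappa_2 \le 1$), and fall back on the upper-tail argument of Lemma \ref{lem:right-order} when $0 \le \zeta/\rho < D$ since $\tkappa_2$ is then bounded away from $0$. The paper carries out the $\ell$-sum by observing that the largest term is at $\ell = j$ and applying Stirling, arriving at $\frac{1}{\sqrt{\tkappa_2}} \sum_{j=1}^k (C\sqrt{j}/\rho)^j$; your route via direct bounds on $|H_{j+2\ell}(0)|$ is equivalent, since for even $n$ one has $|H_n(0)| = (n-1)!!$, which matches the Gaussian-moment factor $2^{n/2}\Gamma((n+1)/2)/\sqrt{\pi}$ up to constants.

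One small correction to your closing remark: the paper's argument for this particular lemma in fact yields the bound $\sum_j (C\sqrt{j}/\rho)^j$, so $\rho \ge C_4 \sqrt{k}$ would already suffice here. The stronger hypothesis $\rho \ge C_4 k$ in the statement is inherited from Lemma \ref{lem:upto-order-k-ew-lower}, not forced by the present estimate.
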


\begin{proof}
Using Lemma \ref{lem:lower-tcjl}, we estimate:
\eqnsplst
{ &\left| \tnf_k - \frac{1}{\sqrt{2\pi}} \frac{1}{\sqrt{\tkappa_2}} \right|
  \le \frac{1}{\tkappa_2} \sum_{j=1}^k \frac{1}{\rho^j} \sum_{\ell=1}^j
      \frac{\tilde{c}_{j,\ell}}{\tkappa_2^{\frac{j}{2}+\ell}} \,
      2^{\frac{j}{2}+\ell+\frac{1}{2}} \, \Gamma \left( \frac{j}{2} + \ell + \frac{1}{2} \right) \\
  &\qquad \le \frac{1}{\tkappa_2} \sum_{j=1}^k 
      \frac{C^j \, (\zeta/\rho)^{-j}}{\tkappa_2^{\frac{j}{2}} \, \rho^j} 
      \sum_{\ell=1}^j \frac{(j+\ell-1)! \, \Gamma \left( \frac{j}{2} + \ell + \frac{1}{2} \right)}
      {\ell! \, (\ell-1)! \, j!} \\
  &\qquad \le \frac{1}{\tkappa_2} \sum_{j=1}^k 
      \frac{C^j \, (\zeta/\rho)^{-j}}{\tkappa_2^{\frac{j}{2}} \, \rho^j} 
      \sum_{\ell=1}^j \frac{\Gamma \left( \frac{j}{2} + \ell + \frac{1}{2} \right)}{\ell!}. }
The largest term in the sum over $\ell$ is for $\ell = j$, and hence, using 
Stirling's formula, the right hand side is at most
\eqnsplst
{ \frac{1}{\tkappa_2} \sum_{j=1}^k 
      \frac{C^j \, j^{j/2} \, (\zeta/\rho)^{-j}}{\tkappa_2^{\frac{j}{2}} \, \rho^j}. }
Now if $\zeta/\rho \ge D$, we have $(\zeta/\rho)^{-1} \le \tkappa_2^{1/d_1}$, and hence 
we have the upper bound
\eqnsplst
{ \frac{1}{\tkappa_2} \sum_{j=1}^k 
      \frac{C^j \, j^{j/2} \, \tkappa_2^{(\frac{1}{d_1}-\frac{1}{2}) j}}{\rho^j}
  \le \frac{1}{\tkappa_2} \sum_{j=1}^k 
      \frac{C^j \, j^{j/2}}{\rho^j}, }
and the sum can be made small by choosing $C_4$ large.

If $0 \le \zeta/\rho < D$, we reach the same conclusion, since $\tkappa_2$ is then 
bounded away from $0$.
\end{proof}

\begin{proof}[Proof of Theorem \ref{thm:main} when $y \le 0$.]
The proof is similar to the computations in Section \ref{sssec:upper-tail-ew}.
Combining the estimates in Lemmas \ref{lem:truncate-ut-ew-lower}, 
\ref{lem:truncate-exp-ew-lower}, \ref{lem:upto-order-k-ew-lower} and 
\ref{lem:chik-tail-int-ew-lower}, yields the statement.
This completes the proof of the Theorem.
\end{proof}

\section{Stochastic comparison and error of the approximation}
\label{sec:stoch-mon}

The goal of this section is to prove Theorem \ref{thm:combined}
building on the main technical estimate of Theorem \ref{thm:main}. 
Uniformity in the conditioning will be 
achieved via the following bound on the escape rate of 
$R_\ell$, that follows easily from Theorem \ref{thm:stoch-mon}.
Since the proof of Theorem \ref{thm:stoch-mon} itself is somewhat 
technical, we defer it to the end of this section.

Recall that we fix $\lambda = (2 \gamma - d)/\omega_{d-1}$.
It will be convenient to abreviate 
$d_2 := \frac{d_1}{d/\gamma} = 2 \frac{\gamma}{d} - 1 = \lambda \omega_{d-1} / d$.

\begin{proposition}
\label{prop:incr-R's}
For any $0 < a < 1/d_2$ there exist constants $C = C(a)$ and 
$c = c(a)$ such that for all $0 < s < \infty$ and 
for all $\ell \ge 2$ we have
\eqnst
{ \prob \big( R_{\ell} \le (a \ell)^{1/d} \,\big|\, S = s \big)
  \le C \exp \left( - c \ell \right). }
\end{proposition}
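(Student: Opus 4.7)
The strategy is to reduce Proposition \ref{prop:incr-R's} to a Poisson concentration estimate by invoking the stochastic comparison of Theorem \ref{thm:stoch-mon}. The coupling gives $R'_{\ell-1} \le R_\ell$ almost surely, so
\eqnst
{ \prob \big( R_\ell \le (a\ell)^{1/d} \,\big|\, S = s \big)
  \le \prob \big( R'_{\ell-1} \le (a\ell)^{1/d} \big), }
and crucially the right hand side no longer depends on $s$, which is exactly what we need for uniformity in the conditioning.

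The event $\{R'_{\ell-1} \le r\}$ coincides with the event that a homogeneous Poisson process of intensity $\lambda$ places at least $\ell-1$ points in the ball of radius $r$ around the origin. With $r = (a\ell)^{1/d}$, the number of such points is Poisson with mean
\eqnst
{ \lambda \, \frac{\omega_{d-1}}{d} \, a \, \ell
  = d_2 \, a \, \ell, }
by the normalization $\lambda = (2\gamma-d)/\omega_{d-1}$ and the definition $d_2 = \lambda \omega_{d-1}/d$. Since $a < 1/d_2$ by hypothesis, the mean $d_2 a \ell$ is strictly less than $\ell$, hence strictly less than the target threshold $\ell - 1$ once $\ell$ is large.

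The remaining step is a standard Chernoff bound for a Poisson variable $N$ with mean $\mu = d_2 a \ell$:
\eqnst
{ \prob(N \ge \ell - 1)
  \le \exp \big( -(\ell-1) \log( (\ell-1)/\mu ) + (\ell-1) - \mu \big), }
which for $d_2 a < 1$ gives a bound of the form $C e^{-c \ell}$ with $c = c(a) > 0$ depending only on $a$ (through $d_2 a$), and for small $\ell$ we simply absorb the constant into $C$. Since no step in this argument is genuinely delicate, the real content of the proposition is in Theorem \ref{thm:stoch-mon} itself; once that coupling is in hand, the rest is essentially the classical Poisson large deviation estimate.
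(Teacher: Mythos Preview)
Your proof is correct and follows essentially the same approach as the paper: invoke the coupling of Theorem \ref{thm:stoch-mon} to replace the conditional event by the unconditional $\{R'_{\ell-1}\le (a\ell)^{1/d}\}$, then apply an exponential-moment (Chernoff) bound. The only cosmetic difference is that the paper parameterizes the unconditioned event via the representation $(R'_{\ell-1})^d \stackrel{\mathrm d}{=} T_1+\dots+T_{\ell-1}$ with i.i.d.\ $\mathsf{Exp}(d_2)$ increments and bounds the sum, whereas you phrase the same event as a Poisson count in the ball of radius $(a\ell)^{1/d}$; these are two descriptions of the same probability and lead to the same exponential rate.
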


\begin{proof}[Proof of Proposition \ref{prop:incr-R's} --- 
assuming Theorem \ref{thm:stoch-mon}.]
Consider the coupling stated in Theorem \ref{thm:stoch-mon} with 
unconditioned radii $R'_1, R'_2, \dots$. Under this coupling we have
\eqnspl{e:escape-rate}
{ \prob \big( R_{\ell} \le (a \ell)^{1/d} \,\big|\, S = s \big)
  \le \prob \big( R'_{\ell-1} \le (a \ell)^{1/d} \big). }
Since $\left( R'_{\ell-1} \right)^d \stackrel{\mathrm{d}}{=} T_1 + \dots + T_{\ell-1}$,
where $T_1, T_2, \dots$ are i.i.d.~$\mathsf{Exp}(d_2)$ random variables,
for any $\theta > 0$ the right hand side of \eqref{e:escape-rate} equals
\eqnst
{ \prob \left( \sum_{j=1}^{\ell-1} T_j \le a \ell \right)
  = \prob \left( \prod_{j=1}^{\ell-1} e^{-\theta T_j} 
    \ge e^{-\theta a \ell} \right) 
  \le e^{\theta a} \left( \frac{d_2 \, e^{a \theta}}{\theta + d_2} \right)^{\ell-1}. }
This satisfies the claimed bound, when $\theta$ is chosen 
sufficiently small.
\end{proof}

\begin{proof}[Proof of Theorem \ref{thm:combined} --- assuming
Theorem \ref{thm:stoch-mon}.]
Recall that $a_0$ is a number such that $0 < a_0 < 1/d_2$. Take $a = a_0$ 
in Proposition \ref{prop:incr-R's}, and 
let $k = \lfloor \sqrt{a_0 \, \ell} \rfloor$. The difference in the left 
hand side of \eqref{e:diff-DF} can be written as an integral over 
the variables $(r_1, \dots, r_\ell)$. We split the domain of 
integration into two regions:
\eqnsplst
{ \caR
  &= \left\{ (r_1, \dots, r_\ell) : s^{-1/\gamma} < r_1 \le r,\, 
     r_1 < \dots < r_\ell \right\} 
  = \caR_1 \cup \caR_2 \\
  \caR_1
  &= \left\{ (r_1, \dots, r_\ell) : s^{-1/\gamma} < r_1 \le r,\, 
     r_1 < \dots < r_\ell
     \le (a_0 \ell)^{1/d} \right\} \\
  \caR_2
  &= \left\{ (r_1, \dots, r_\ell) : s^{-1/\gamma} < r_1 \le r,\, 
     r_1 < \dots < r_\ell,\, 
     r_\ell > (a_0 \ell)^{1/d} \right\}. }
We then have
\eqnsplst
{ &\prob [ R_1 \le r \,|\, S = s ]
  = \frac{1}{f_S(s)} \int_\caR dr_1 \dots dr_\ell \, 
    f_{R_1,\dots,R_\ell}(r_1,\dots,r_\ell) \, 
    f_{\oS^{(r_\ell)}}(s - r_1^{-\gamma} - \dots - r_\ell^{-\gamma}) \\
  &\qquad = \frac{1}{f_S(s)} \int_{\caR_2} \ + \, \frac{1}{f_S(s)} \int_{\caR_1} \ 
  =: I_2 + I_1. }
Due to Proposition \ref{prop:incr-R's}, we have
\eqnst
{ I_1
  \le \prob [ R_\ell \le (a_0 \ell)^{1/d} \,|\, S = s ]
  \le C \, e^{-c \ell}. }
On $\caR_2$, we have $r_\ell^{d/2} > \sqrt{a_0 \ell} \ge k$, 
and requiring $\ell \ge C_4^4 \, a_0^{-1}$ ensures that 
also $r_\ell^{d/2} \ge C_4 \, \sqrt{k}$. Hence on $\caR_2$ 
Theorem \ref{thm:main} can be applied, and we can write 
\eqnsplst
{ f_{\oS^{(r_\ell)}} (s - r_1^{-\gamma} - \dots - r_\ell^{-\gamma})
  &= \frac{1}{\sigma(r_\ell)} f_{Y^{(r_\ell)}}(y)
  = \frac{1}{\sigma(r_\ell)} \tnf_k e^{-\xi y} \phi_{Y^{(r_\ell)}} (-\xi) \,
    [ 1 + \eps_k(y,r_\ell) ] \\
  &= \fg_{\ell,k}(y,r_\ell) \, [ 1 + \eps_k(y,r_\ell) ], }
where
\eqnsplst
{ |\eps_k(y,r_\ell)|
  \le \frac{C_2 \, C_3^k \, k^{k/2}}{(r_\ell^{d/2})^{k+1}}
  \le \frac{C_2 \, C_3^k \, k^{k/2}}{\sqrt{a_0 \ell}^{\sqrt{a_0 \ell}}}
  \le C \, \exp ( c \, \sqrt{\ell} - c' \, \sqrt{\ell} \, \log \ell ). }
If $\ell$ is sufficiently large, we have $|\eps_k| \le 1/2$, and hence
\eqnsplst
{ f_{\oS^{(r_\ell)}} (s - r_1^{-\gamma} - \dots - r_\ell^{-\gamma})
  = \fg_{\ell,k}(y,r_\ell) + O \left( e^{-c \sqrt{\ell} \, \log \ell} \right) \,
    f_{\oS^{(r_\ell)}} (s - r_1^{-\gamma} - \dots - r_\ell^{-\gamma}). }
This implies that 
\eqnst
{ I_2
  = \frac{1}{f_S(s)} \, \int_{s^{-1/\gamma}}^r \ff_{R_1,S}^{\ell,k}(r_1, s) \, dr_1
    + O \left( e^{-c \sqrt{\ell} \, \log \ell} \right). } 
This completes the proof of the theorem.
\end{proof}

In the proof of Proposition \ref{prop:incr-R's} we used the 
stochastic comparison stated in Theorem \ref{thm:stoch-mon}, which we
now prove. The proof has two steps: we first prove a version 
with a fixed number of points in a finite region, stated in
the next proposition. In the second step we pass to the limit
of infinitely many points.

Let $U_1, \dots, U_n$ be i.i.d.~$\Unif(0,m)$ random variables
($m > 0$) with order statistics $U^{(1)} < \dots < U^{(n)}$. Let 
$W_j = U_j^{-\gamma/d}$, where $\gamma > d$. Let also 
$U'_1, \dots, U'_{n-1}$ be i.i.d.~$\Unif(0,m)$ with order
statistics $U^{(1)'} < \dots < U^{(n-1)'}$.

\begin{proposition}
\label{prop:finite-coupling}
There is a coupling between  
\eqnst
{ \text{the conditional law of $(U_1, \dots, U_n)$ given $\sum_{j=1}^n W_j = s$ 
    ($s > n \, m^{-\gamma/d}$)} } 
and 
\eqnst
{ \text{the unconditional law of $U'_1, \dots, U'_{n-1}$} } 
such that we have $U^{(j)} \ge U^{(j-1)'}$, $2 \le j \le n$, almost surely. 
\end{proposition}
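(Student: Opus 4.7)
The plan is to transform the problem via the substitution $V_j = U_j^{-\gamma/d} = W_j$ into a coordinate-wise stochastic dominance problem for two explicit densities, and then apply Strassen's theorem to obtain the coupling.

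Under the transformation, the condition $\sum_j W_j = s$ becomes $\sum_j V_j = s$, and since $x \mapsto x^{-\gamma/d}$ is strictly decreasing on $(0,\infty)$, the ordering reverses. Writing $V^{(1)} > V^{(2)} > \dots > V^{(n)}$ with $V^{(j)} = (U^{(j)})^{-\gamma/d}$, and similarly $V'^{(1)} > \dots > V'^{(n-1)}$ for the reverse-ordered unconditional $(n-1)$-tuple, the desired inequality $U^{(j)} \ge U^{(j-1)'}$ for $j \ge 2$ becomes
\[ V^{(j)} \le V^{(j-1)'} \qquad \text{for all } j = 2, \dots, n. \]
Thus it suffices to couple the conditional law of $(V^{(2)}, \dots, V^{(n)})$ with the unconditional law of $(V'^{(1)}, \dots, V'^{(n-1)})$ so that the former is coordinate-wise at most the latter.

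I would first compute both densities explicitly. Each $V_j$ has marginal density $(d/(\gamma m))\, v^{-1 - d/\gamma}$ on $v > a := m^{-\gamma/d}$. Restricting the product density to the hyperplane $\{\sum v_j = s\}$, sorting, and integrating out $V^{(1)} = s - T$, where $T = \sum_{j \ge 2} v_j$, yields
\[ h(v_2, \dots, v_n) \;\propto\; (s - T)^{-1 - d/\gamma}\, \prod_{j=2}^n v_j^{-1 - d/\gamma} \]
on the domain $A_s = \{a < v_n < \dots < v_2,\ s - T > v_2\}$; the last constraint records that $V^{(1)} = s-T$ really is the maximum. The unconditional density is $g(v_1', \dots, v_{n-1}') \propto \prod_{j=1}^{n-1} (v_j')^{-1 - d/\gamma}$ on $\{a < v_{n-1}' < \dots < v_1'\}$. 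I would then establish coordinate-wise stochastic dominance of $h$ by $g$ sequentially: given a partial coupling of the first $k$ coordinates realizing the inequalities, the next step reduces to showing that the conditional distribution of the $(k+1)$-st coordinate under $h$ is stochastically dominated by its analogue under $g$. Integrating out the remaining coordinates reduces this to a one-dimensional inequality of the form
\[ \frac{\int_x^{M} \Phi(v)\,\mu(dv)}{\int_a^M \Phi(v)\,\mu(dv)} \;\le\; \frac{\mu((x,\infty))}{\mu((a,\infty))}, \qquad x \in (a, M), \]
where $\mu(dv) = v^{-1 - d/\gamma}\, dv$, $\Phi$ is an increasing function on $(a, M)$ encoding the weight $(s - T)^{-1-d/\gamma}$, and $M$ is the induced support cap that depends on the revealed coordinates. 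Once this inequality is proved, Strassen's theorem provides the required coupling; setting $V^{(1)} = s - \sum_{j \ge 2} V^{(j)}$ recovers the missing coordinate and inverting yields the coupling for the $U_j$'s.

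The main obstacle is the one-dimensional inequality above. Within the support of $h$ the weight $(s - T)^{-1 - d/\gamma}$ is \emph{increasing} in each coordinate, which on its own would bias $h$ toward large values (the wrong direction), whereas the support cap $s - T > v_2$ truncates them from above. Showing that the cap wins requires a delicate correlation-type argument exploiting the fact that the cap and the weight are coupled: as $T$ grows the cap tightens while the weight increases, and the specific power-law form of $\mu$ is what tips the balance. I expect this competition between positive bias and hard truncation to be the core technical point of the proof.
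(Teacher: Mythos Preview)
Your reduction to the $V$-variables is set up correctly, and the target statement---coordinate-wise stochastic domination of $(V^{(2)},\dots,V^{(n)})$ by $(V'^{(1)},\dots,V'^{(n-1)})$---is equivalent to the proposition. However, the proposal stops precisely where the difficulty lies. The one-dimensional inequality you isolate,
\[
\frac{\int_x^{M}\Phi(v)\,\mu(dv)}{\int_a^{M}\Phi(v)\,\mu(dv)}
\;\le\;
\frac{\mu((x,\infty))}{\mu((a,\infty))},
\]
is \emph{false} for a generic increasing $\Phi$ on $(a,M)$: take $\Phi$ approximating $\mathbf 1_{(M-\eps,M)}$ and the left side tends to $1$ while the right side stays strictly below $1$. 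So the inequality cannot be obtained by any abstract ``$\Phi$ increasing plus cap'' argument; it can hold only because of the specific algebraic link between the weight $(s-T)^{-1-d/\gamma}$ and the cap $s-T>v_2$, and you have not supplied that argument. You flag this yourself (``I expect the cap wins''), which means the core of the proof is missing.

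There is a second gap in the sequential scheme. After coupling the first $k$ coordinates you must compare the conditional law of $V^{(k+2)}$ given $(V^{(2)},\dots,V^{(k+1)})=(v_2,\dots,v_{k+1})$ with the conditional law of $V'^{(k+1)}$ given $V'^{(k)}=v'_k$, where $v'_k\ge v_{k+1}$ but is otherwise unrelated to the $v_i$. Your displayed inequality compares against the \emph{unrestricted} law $\mu$ on $(a,\infty)$, not against $\mu$ restricted to $(a,v'_k)$ and reweighted by the order-statistics factor; the reduction to a single inequality of that clean form is not justified.

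The paper avoids all of this by a completely different mechanism. It first proves the $n=2$ case directly, constructing an explicit monotone map $T$ between arclength measure on the conditional curve $\{u_1^{-\gamma/d}+u_2^{-\gamma/d}=s\}$ and Lebesgue measure on an interval, and checking by hand that the image lies in the required range. For $n\ge3$ it runs a two-coordinate Gibbs sampler: at each step it picks the smallest coordinate and one other, resamples this pair from its conditional law, and uses the $n=2$ coupling to dominate the larger of the pair by a fresh $\mathsf{Unif}(0,m)$. A separate argument shows that eventually all but one coordinate of the dominated vector have been refreshed in this way. This reduction to $n=2$ sidesteps the multivariate likelihood-ratio issues that block your approach.
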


\begin{proof}
The proof has two parts. We first prove the case $n = 2$,
and then use it to prove the general case via a Gibbs sampling argument.

\medbreak

\emph{The case $n = 2$.} 
Let $\Sigma_1$ be the set
\eqnst{
  \Sigma_1
  = \left\{ (u_1, u_2) \in (0,m)^2 : 
    \text{$u_1 \le u_2$ and
    $u_1^{-\gamma/d} + u_2^{-\gamma/d} = s$} \right\}. }
Let $s_0 = s - m^{-\gamma/d}$, and let   
\eqnst
{ \Sigma_0 
  = \left\{ (u_1, u_2) \in (0,m)^2 : \text{$u_1 = s_0^{-d/\gamma}$ and 
    $u_2 \ge s_0^{-d/\gamma}$} \right\}. }
Let $P$ be the orthogonal projection onto the line 
$u_1 = s_0^{-d/\gamma}$, and $\Sigma'_1 = P(\Sigma_1)$. 
Observe that $\Sigma'_1 \subset \Sigma_0$.

When $U_1 = \min \{ U_1, U_2 \}$, the conditional distribution 
of $(U_1, U_2)$ given $W_1 + W_2 = s$, equals the 
normalized length measure on $\Sigma_1$. Expressing
$u_1 = h(u_2) := \left( s - u_2^{-\gamma/d} \right)^{-d/\gamma}$,
this normalized length measure is mapped by $P$ to  
$d\mu_1 = \frac{1}{Z_1} \, f(u_2) \, du_2$,
where $f(u_2) = \sqrt{ 1 + \left( \frac{dh}{du_2} ) \right)^2 }$, 
and $Z_1$ is a normalization constant. 
On the other hand, normalized length measure on $\Sigma_0$ equals
$\mu_0 = \frac{1}{Z_0} \, du_2$, where
$Z_0 = m - s_0^{-d/\gamma}$.

Consider a transformation $T : \Sigma'_1 \to \Sigma_0$  
of the form $T(u_2) = u_2 - g(u_2)$, with a positive function $g = g(u_2)$. 
The Jacobian of $T$ equals $1 - \frac{dg}{du_2}$, and hence, if 
$1 - \frac{dg}{du_2} = f = \sqrt{1 + (\frac{dh}{du_2})^2}$, then 
the image of $\mu_1$ under $T$ is a uniform measure on the
set $[ m-u_* - g(m-u_*), m ]$, where $u_*$ is defined as the
solution to $m - u_* = h(m - u_*)$. Hence we take $g$ to be the function
\eqnst
{ g(m-u)
  = \int_0^u \left[ \sqrt{1 + \left( h'(m-v) \right)^2} - 1 \right] \, dv. }
We show that the image of $T$ is a subset of $[h(m), m]$.
Indeed,
\eqnst
{ g(m-u_*) 
  \le \int_0^{u_*} | h'(m-v) | \, dv
  = - \int_0^{u_*} h'(m-v) \, dv
  = h(m) - h(m-u_*)
  = h(m) - (m - u_*). }
Thus we have shown that the conditional distribution 
of $U_2$, given $W_1 + W_2 = s$ and $U_1 \le U_2$ 
is stochastically larger than that of
a random variable $U'_2$ that is uniform on
$[ s_0^{-d/\gamma}, m ]$. This is stronger than what was
required for the statement. The map $T$ composed with a 
linear stretch of the interval $[s_0^{-2/\gamma}, m ]$ to
$[0,m]$ defines a coupling of the two distributions considered,
that we shall use in the sequel.

\medbreak

\emph{The case $n \ge 3$.} We are going to construct a sequence 
$(\mathbf{U}_k, \mathbf{U}'_k)_{k \ge 0}$ of pairs of random vectors
with the following properties:\\
(i) for each $k \ge 0$, the order statistics of 
$\mathbf{U}_k = (U_{k,1}, \dots, U_{k,n})$ is that of $n$ 
i.i.d.~$\mathsf{Unif}(0,m)$ r.v.~conditioned on 
$\sum_{j=1}^n U_{k,j}^{-\gamma/d} = s$.\\
(ii) for each $k \ge 0$ we have $\mathbf{U}'_k \le \mathbf{U}_k$
componentwise.\\
(iii) almost surely, for all large enough $k$, we have that
there exists exactly one index $1 \le \ell = \ell(k) \le n$ such that 
$U'_{k,\ell} \equiv 0$ and the remaining $U'_{k,j}$'s are 
i.i.d.~$\mathsf{Unif}(0,m)$.\\
Given the above properties, it is sufficient to take a subsequential 
weak limit of the law of $(\mathbf{U}_k, \mathbf{U}'_k)$ as $k \to \infty$
to obtain a coupling, and the Proposition will be proved.

We start the construction by picking $\mathbf{U}_0$ with the required law,
and letting $\mathbf{U}'_0$ be the identically $0$ vector, so that 
(i) and (ii) hold for $k = 0$.
Then for $k \ge 0$ we recursively define $(\mathbf{U}_{k+1}, \mathbf{U}'_{k+1})$
as follows. Let $\ell = \ell(k)$ be the index such that 
\eqnst
{ U_{k,\ell} 
  < \min \{ U_{k,j} : 1 \le j \le n,\, j \not= \ell \}. }
Let $\ell' \not= \ell$ be any index such that $U'_{k,\ell'} = 0$; where
if there is no such index, we select any $\ell' \not= \ell$ according to a
fixed rule. We then let $U_{k+1,j} = U_{k,j}$ for $j \not= \ell, \ell'$
and $U'_{k+1,j} = U_{k,j}$ for $j \not= \ell, \ell'$. We also let 
$U'_{k+1,\ell} = 0$, and we independently update the triple 
$(U_{k+1,\ell}, U_{k+1,\ell'}, U'_{k+1,\ell'})$ as follows. 
The pair $(U_{k+1,\ell}, U_{k+1,\ell'})$ has the 
conditional distribution of a pair of $\mathsf{Unif}(0,m)$ variables 
$(U_{\ell}, U_{\ell'})$ given $U_{\ell} < U_{\ell'}$ and 
$U_{\ell}^{-\gamma/d} + U_{\ell'}^{-\gamma/d} 
= s - \sum_{j \not= \ell, \ell'} U_{k+1,j}^{-\gamma/d}$,
and $U'_{k+1,\ell'}$ is a $\mathsf{Unif}(0,m)$ variable coupled to 
the above pair in such a way that $U'_{k+1,\ell'} < U_{k+1,\ell'}$
almost surely. This is possible, due to the already proved $n=2$ case.
Then (i) is satisfied for $k+1$, since we have updated the coordinates 
$\ell, \ell'$ according to their conditional law given the other 
coordinates (up to ordering). It is also clear that (ii) is satisfied 
for $k+1$.

It remains to show that (iii) is satisfied. For this, first observe that 
by construction, the number of zero coordinates of $\mathbf{U}'_k$ never 
increases, and once there is only one zero coordinate, this holds for
all larger times. Hence in order to prove (iii), it is sufficient to show
that there are infinitely many times $k$ when $U'_{k,\ell(k)} = 0$.
For this it is sufficient to show that there is $c = c(\gamma/d,s,n,m) > 0$ 
and a finite $K$ such that whatever the value of $\mathbf{U}_k$ is, 
the probability that $U'_{k',\ell(k')} = 0$ for some $k \le k' < k+K$ 
is at least $c$. We break things down according to two (partially overlapping) 
cases the vector $\mathbf{U}_k$ can satisfy. In order to define these cases,
let $\delta = \delta(\gamma/d,s,n,m) > 0$ be sufficiently small with the 
following property:
\eqn{e:property}
{ \text{If $\max_{1 \le i < j \le n} |U_{k,i} - U_{k,j}| < \delta$, then we
have $U_{k,j} < m-\delta$ for all $1 \le j \le n$.} }
Let 
\eqnst
{ I 
  = \left\{ 1 \le j \le n : \left| U_{k,j} - U_{k,\ell(k)} \right| < \delta \right\}. }

\emph{Case (a)} $U_{k,\ell'(k)} < m-\frac{\delta}{4}$.
In this case, there is probability at least 
$c_a = c_a(\gamma/d,\delta) > 0$ that $U_{k+1,\ell'(k)} > U_{k,\ell'(k)}$
and $U_{k+1,\ell(k)} < U_{k,\ell(k)}$. 
On this event, we have
$\ell(k+1) = \ell(k)$, and $U'_{k+1,\ell(k+1)} = U'_{k+1,\ell(k)} = 0$, and
the required property holds.

\emph{Case (b)} $U_{k,\ell'(k)} \ge m - \delta$.\\
Observe (using \eqref{e:property}) that this implies that $\ell'(k) \not\in I$,
and consequently $U_{k,\ell'(k)} > U_{k,\ell(k)} + \delta$.
In this case, there is probability at least $c_b = c_b(\gamma/d,\delta) > 0$
that $U_{k,\ell(k)} < U_{k+1,\ell(k)} < U_{k,\ell(k)} + \frac{\delta}{4}$ and
$U_{k+1,\ell'(k)} < U_{k,\ell'(k)} - \frac{\delta}{4}$.
On this event, the number of indices $j$ such that $U_{k+1,j} \ge m-\delta$
is one less than the corresponding number at time $k$. Hence after at most 
$n-1$ applications of Case (b) we must arrive at Case (a).

This completes the proof of the Proposition.
\end{proof}

\begin{proof}[Proof of Theorem \ref{thm:stoch-mon}]
Take $m = n d / \lambda \omega_{d-1}$ in Proposition \ref{prop:finite-coupling}.
Then $R^{(n)}_j := \left( U^{(j)} \right)^{1/d}$, $j = 1, \dots, n$ has the 
distribution of the (ordered) set of radii of $n$ independent points 
chosen uniformly in a ball of volume $n/\lambda$. 
Let $S^{(n)} := \sum_{i=1}^n \left( R^{(n)}_i \right)^{-\gamma}$.
Proposition \ref{prop:finite-coupling} yields a coupling
between the conditional law of $\{ R^{(n)}_i \}_{i=1}^n$ 
given $S^{(n)} = s$ and the unconditional law of a collection 
$R^{(n)'}_2 < \dots < R^{(n)'}_n$ of $n-1$ radii. 
It remains to pass to the limit $n \to \infty$.

It is sufficient to show that for any $0 < r < \infty$
the conditional distribution (given $S^{(n)} = s$) of the points satisfying 
$R^{(n)}_i < r$ converges to the conditional distribution (given $S = s$)
of the points satisfying $R_i < r$. Let us write $N^{(n)}_r$, respectively
$N_r$ for the number of radii satisfying this property. It is sufficient to 
prove convergence with the value $N^{(n)}_r = k = N_r$ fixed, that is,
to prove the convergence
\eqnspl{e:cond-distr}
{ &\prob \big[ R^{(n)}_1 \le r_1, \dots, R^{(n)}_{k} \le r_{k},
     \text{$R^{(n)}_{j} > r$ for $k+1 \le j \le n$} \,\big|\, S^{(n)} = s \big] \\
  &\qquad \stackrel{n \to \infty}{\longrightarrow}
     \prob \big[ R_1 \le r_1, \dots, R_{k} \le r_{k},
     \text{$R_{j} > r$ for $k+1 \le j < \infty$} \,\big|\, S = s \big] }
for each $r_1 \le \dots \le r_{k} \le r$ and $k \ge 0$. 
We split $S^{(n)}$ into the contributions
\eqnsplst
{ \oS^{(n)}_{k}
  = \sum_{j=k+1}^n (R_j^{(n)})^{-\gamma} \qquad \text{ and } \qquad
  S^{(n)}_{k}
  = \sum_{j=1}^{k} (R_j^{(n)})^{-\gamma}, }
and likewise we split $S$ into the contributions
\eqnsplst
{ \oS_{k}
  = \sum_{j=k+1}^\infty R_j^{-\gamma} \qquad \text{ and } \qquad
  S_{k}
  = \sum_{j=1}^{k} R_j^{-\gamma}, }
We can write the left hand side of \eqref{e:cond-distr} as
\eqnspl{e:conv-decomp}
{ &\frac{\prob [ N^{(n)}_r = k ]}{f_{S^{(n)}}(s)} \, 
     \int f_{\oS^{(n)}_{k} \,|\, N^{(n)}_r}(\os \,|\, k) \,
     f_{S^{(n)}_{k} \,|\, N^{(n)}_r} ( s - \os \,|\, k ) \\
  &\qquad\qquad \times \prob [ R^{(n)}_1 \le r_1, \dots, R^{(n)}_{k} \le r_{k} \,|\, 
  N^{(n)}_r = k,\, S^{(n)}_{k} = s - \os ] \, d\os. }
We claim that the conditional law of $\oS^{(n)}_{k}$ given $N^{(n)}_r = k$
converges to the conditional law of $\oS_{k}$ given $N_r = k$. 
An application of Lemma \ref{lem:large-dev-bnd} yields that 
given $\eps > 0$ we can find $r' > r$ large enough such that 
uniformly in $n$ and $k$ we have
\eqnsplst
{ \prob \Bigg[ \sum_{j : R^{(n)}_j \ge r'} ( R^{(n)}_j )^{-\gamma} \ge \eps 
    \,\Bigg|\, 
     N^{(n)}_r = k \Bigg]
  &< \eps, \\ 
  \prob \Bigg[ \sum_{j : R_j \ge r'} ( R_j )^{-\gamma} \ge \eps 
     \,\Bigg|\, 
     N_r = k \Bigg]
  &< \eps. }
Therefore, the claim follows from the convergence in distribution of the 
radii falling between $r$ and $r'$, which holds due to the relationship between 
$m$ and $n$ (and a binomial to Poisson convergence).
Observe that the limiting law of $\oS_{k}$ given $N_r = k$ 
is continuous.

We turn to the remaining quantities in \eqref{e:conv-decomp}. 
We have $\lim_{n \to \infty} \prob [ N^{(n)}_r = k ] = \prob [ N_r = k ]$ 
again by the choice of $m$. When $k = 0$, this proves the convergence 
sought in \eqref{e:cond-distr}. Henceforth assume $k \ge 1$.

The conditional distribution function of $R^{(n)}_1, \dots, R^{(n)}_{k}$ 
inside the integral in \eqref{e:conv-decomp} is in fact independent of $n$, 
and equals
\eqnst
{ \prob [ R_1 \le r_1, \dots, R_{k} \le r_{k} \,|\, 
  N_r = k,\, S_{k} = s - \os ]. }
This expression is a bounded function of $\os$. 
When $k = 1$, it only takes the values $0$ and
$1$, and has at most one jump in $\os$. When $k \ge 2$, it is continuous 
in $\os$ whenever $f_{S_{k} \,|\, N_r} (s - \os \,|\, k ) > 0$.
The conditional density $f_{S^{(n)}_{k} \,|\, N^{(n)}_r}$ 
inside the integral in \eqref{e:conv-decomp} is independent of $n$, and equals
$f_{S_{k} \,|\, N_r} (s - \os \,|\, k )$. It is also continuous in $\os$ 
when $k \ge 2$. When $k = 1$, it is continuous apart 
from one point. It is also bounded, since it can be written as the $k$-fold 
convolution of the $k = 1$ case, which has a bounded density.
Hence the integral in \eqref{e:conv-decomp} converges to the claimed
limit. Due to Lemma \ref{lem:density-conv} the density 
$f_{S^{(n)}}(s)$ also converges to $f_S(s)$, and
this completes the proof of the Theorem.
\end{proof}

\section{Contribution of up to three points when $\gamma = 4$ and $d = 2$}
\label{sec:integration}

Conditional on there being $1$, $2$, or $3$ Poisson points in an annulus, the
distribution of their contribution to $S$ can be computed. 
In this section we focus on the distribution of the contribution of the 
\emph{nearest} three points.

We condition on $R_1 = r_1$ and $R_4 = r_4$ for some $0 < r_1 < r_4 < \infty$.
We are interested in the conditional distribution of $W = S_2 + S_3
= R_2^{-4} + R_3^{-4}$. The conditional density of $(S_2,S_3)$ takes the 
form:
\eqnst
{ f_{S_2,S_3 | R_1, R_4}(s_2, s_3 | r_1, r_4)
  = \frac{1}{Z_{r_1,r_4}} \frac{1}{s_2^{3/2} s_3^{3/2}}, 
  \qquad s_4 := r_4^{-4} < s_3 < s_2 < r_1^{-4} =: s_1, }
where $Z_{r_1,r_4}$ is a normalization factor. The possible values of 
$W$ are in the interval $(2 s_4, 2 s_1)$. For 
$w \in (2 s_4, 2 s_1)$, we integrate over the range
$w/2 < s_2 < w - s_4$ to find
\eqnsplst
{ f_{W | R_1, R_4} (w | r_1, r_4)
  &= \frac{1}{Z_{r_1,r_4}} \int_{\frac{w}{2}}^{w - s_4}
    \frac{1}{s_2^{-3/2}} \, \frac{1}{(w - s_2)^{-3/2}} \, ds_2 \\
  &= \frac{1}{Z_{r_1,r_4}} \, \frac{1}{w^3} 
    \int_{\frac{w}{2}}^{w - s_4}
    \frac{1}{(\frac{s_2}{w})^{3/2}} \, 
    \frac{1}{(1 - \frac{s_2}{w})^{3/2}} \, ds_2. } 
Changing variables according to $\frac{s_2}{w} = \sin^2 \theta$, this equals
\eqnsplst
{ &f_{W | R_1, R_4} (w | r_1, r_4)
  = \frac{1}{Z_{r_1,r_4}} \, \frac{1}{w^2} 
    \int_{\frac{w}{2}}^{w - s_4}
    \frac{2 \, \sin \theta \, \cos \theta}{\sin^{3} \theta \, \cos^3 \theta}  
    \, d\theta
  = \frac{1}{Z_{r_1,r_4}} \, \frac{8}{w^2} 
    \int_{s_2 = \frac{w}{2}}^{s_2 = w - s_4}
    \frac{1}{\sin^2 2 \theta}  
    \, d\theta \\
  &\qquad = \frac{1}{Z_{r_1,r_4}} \, \frac{4}{w^2} 
    \left[ - \frac{\cos 2 \theta}{\sin 2 \theta}  
    \right]_{s_2 = \frac{w}{2}}^{s_2 = w - s_4} 
  = \frac{1}{Z_{r_1,r_4}} \, \frac{4}{w^2} 
    \left[ \frac{2 \sin^2 \theta - 1}{2 \sin \theta \, \cos \theta}  
    \right]_{s_2 = \frac{w}{2}}^{s_2 = w - s_4} \\
  &\qquad = \frac{1}{Z_{r_1,r_4}} \, \frac{4}{w^2} 
    \frac{2 ( 1 - \frac{s_4}{w} ) - 1}{2 \, \sqrt{1 - \frac{s_4}{w}} 
    \, \sqrt{\frac{s_4}{w}}}
  = \frac{1}{Z_{r_1,r_4}} \, \frac{4}{w^2} 
    \frac{( \frac{1}{2} - \frac{s_4}{w} )}{\sqrt{\frac{5}{4} - 
    (\frac{1}{2} - \frac{s_4}{w})^2}}. } 
Suppose now that we instead condition on $R_1 = r_1$ and $R_5 = r_5$, and 
are interested in the conditional distribution of $Z = S_2 + S_3 + S_4$. 
The range of possible values of $Z$ is $(3 s_5,3 s_1)$, where 
$s_1 = r_1^{-4}$ and $s_5 = r_5^{-4}$. Using the result of the previous 
calculation, for the conditional density of $Z$ we obtain the 
elliptic integral:
\eqnsplst
{ f_{Z | R_1, R_5} (z | r_1, r_5)
  &= \frac{1}{Z_{r_1,r_5}} \int_{z - s_1}^{\frac{2}{3} z} 
    \frac{1}{(z - w)^{3/2}} \,
    \frac{4}{w^2} 
    \frac{( \frac{w}{2} - s_5 )}{\sqrt{s_5 w - s_5^2}} \, dw. }

\appendix

\section{Appendix}
\label{a:calc}

\begin{proof}[Proof of Lemma \ref{lem:density-conv}.]
We have
\eqnst
{ S^{(n)}
  \stackrel{\mathrm{d}}{=} \sum_{j=1}^n \left( U_j \right)^{-\gamma/2}, } 
where $U_1, \dots, U_n$ are i.i.d.~$\Unif(0, n / \lambda \pi)$. Hence
\eqnst
{ \chi_{S^{(n)}}(t)
  := \E \left[ e^{i t S^{(n)}} \right]
  = \chi_W \left( \frac{t}{n^{\gamma/2}} \right)^n, }
where $W = U^{-\gamma/2}$ with $U \sim \Unif(0, 1/\pi \lambda)$. We have
\eqnspl{e:phi(t)}
{ \chi_W(t) 
  &= \E \left[ e^{it U^{-\gamma/2}} \right] 
  = (\pi \lambda) \int_0^{1/\pi \lambda} e^{i t u^{-\gamma/2}} \, du
  = \frac{2 \pi \lambda}{\gamma} \, |t|^{2/\gamma} \, \int_{|t| (\pi\lambda)^{\gamma/2}}^\infty 
    e^{\sgn(t) i v} \, 
    \frac{dv}{v^{\frac{2}{\gamma} + 1}} \\
  &= 1 + \frac{2 \pi \lambda}{\gamma} \, |t|^{2/\gamma} \, \int_{|t| (\pi\lambda)^{\gamma/2}}^\infty 
    \left( e^{\sgn(t) i v} - 1 \right) \, 
    \frac{dv}{v^{\frac{2}{\gamma} + 1}}. }
This gives 
\eqnst
{ \chi_{S^{(n)}}(t)
  = \chi_W \left( \frac{t}{n^{\gamma/2}} \right)^n
  = \left[ 1 + \frac{1}{n} \, \frac{2 \pi \lambda}{\gamma} \, |t|^{2/\gamma} \, 
    \int_{|t| (\pi \lambda)^{\gamma/2} / n^{2/\gamma}}^\infty 
    \left( e^{\sgn(t) i v} - 1 \right) \,  
    \frac{dv}{v^{1 + \frac{2}{\gamma}}} \right]^n. } 
The characteristic function of $S$ on the other hand is 
\eqnspl{e:chiS}
{ \chi_S(t)
  &:= \E \left[ e^{i t S} \right]
  = \exp \left( \frac{2 \pi \lambda}{\gamma} \, |t|^{2/\gamma} \, 
    \int_0^\infty \left( e^{\sgn(t) i v} - 1 \right) \, 
    \frac{dv}{v^{1 + \frac{2}{\gamma}}} \right) \\
  &= \exp \left( - b_2(\gamma) \, |t|^{2/\gamma} \, 
    (1 + i \, \sgn(t) \, \tan(\pi/\gamma) ) \right), }
where $b_2(\gamma) = b_1(\gamma) \, \cos ( \pi/ \gamma ) > 0$.

%
We estimate the $L^1$-distance between $\chi_{S^{(n)}}$ and $\chi_S$.
For $|t| < (\pi \lambda)^{-\gamma/2}$ we expand
\eqnspl{e:phi-expansion}
{ \chi_W(t)
  &= 1 + \frac{2 \pi \lambda}{\gamma} \, |t|^{2/\gamma} 
    \left[ \int_{|t| (\pi \lambda)^{\gamma/2}}^1 
    \left( e^{\sgn(t) i v} -1 \right) \, 
    \frac{dv}{v^{1 + \frac{2}{\gamma}}}
    + \int_1^\infty \left( e^{\sgn(t) i v} -1 \right) \, 
    \frac{dv}{v^{1 + \frac{2}{\gamma}}} \right] \\
  &= 1 - b_2(\gamma) \, |t|^{2/\gamma} \, (1 + i \, \sgn(t) \, \tan(\pi/\gamma) )
    - i \, d \, t + O(t^2), }
where $d$ is a constant. This implies that for $\eps > 0$ small, taking 
$t = \pm \eps n$ we have
\eqnspl{e:phi-calc}
{ &\chi_W \left( \frac{t}{n^{\gamma/2}} \right)^n \\
  &\qquad = \left( 1 - b_2(\gamma) ( 1 + i \, \sgn(t) \, \tan(\pi/\gamma) ) \, 
     \frac{\eps^{2/\gamma}}{n^{1 - 2/\gamma}}
    - i \, d \, \frac{\eps}{n^{\frac{\gamma}{2} - 1}} 
    + O( \frac{\eps^2}{n^{\gamma - 2}} ) \right)^{n^{1 - \frac{2}{\gamma}} 
    \cdot n^{\frac{2}{\gamma}}} \\
  &\qquad = \left[ \exp \left( - b_2(\gamma) \, (1 + i \, \sgn(t) \, \tan(\pi/\gamma) \, 
    \eps^{2/\gamma} \right) \, 
    \left( 1 + O \left( \frac{\eps}{n^{\frac{\gamma}{2} + \frac{2}{\gamma} - 2}} 
    \right) \right) \right]^{n^{\frac{2}{\gamma}}} \\
  &\qquad = \exp \left( - b_2(\gamma) \, (1 + i \, \sgn(t) \, \tan(\pi/\gamma)) \, 
    |t|^{2/\gamma} \right) \, 
    \left( 1 + O \left( \frac{\eps}{n^{\frac{\gamma}{2} - 2}} \right) \right). }
Let us now take $0 \le \eps \le n^{\eta - 1}$, where 
$0 < 2 \eta < \min \{ \frac{\gamma}{2} - 1, 2 \}$. This gives
\eqnspl{e:interval1}
{ \int_{-n^\eta}^{n^{\eta}} 
    \left| \chi_W \left( \frac{t}{n^{\gamma/2}} \right)^n - \chi_S(t) \right| \, dt
  \le O(1) \, n^\eta \, \frac{n^{\eta-1}}{n^{\frac{\gamma}{2} - 2}}
  = O(1) \, n^{2 \eta - \frac{\gamma}{2} + 1}
  \to 0, \quad \text{as $n \to \infty$.} }
Fix now a small $\eps_0 > 0$. When $n^\eta \le |t| \le \eps_0 n^{\gamma/2}$, 
from the expansion \eqref{e:phi-expansion} we have
\eqnst
{ \chi_W \left( \frac{t}{n^{\gamma/2}} \right)
  = \exp \left( - b_2(\gamma) \, |t|^{2/\gamma} \, \frac{1}{n} \, 
    ( 1 + i \, \sgn(t) \, \tan(\pi/\gamma) ) 
    + O ( |t|^{4/\gamma} \, n^{-2} ) \right), }
and hence when $\eps_0$ is small enough, we have
\eqnspl{e:estimate1}
{ \Re \log \chi_W \left( \frac{t}{n^{\gamma/2}} \right)
  &\le - (b_2(\gamma)/2) \, |t|^{2/\gamma} \, \frac{1}{n} \\
  \left| \chi_{S^{(n)}}(t) \right|
  = \left| \chi_W \left( \frac{t}{n^{\gamma/2}} \right)^n \right|
  &\le \exp \left( - (b_2(\gamma)/2) |t|^{2/\gamma} \right). }
In addition (for the same reason),
\eqn{e:estimate2}
{ \left| \chi_S(t) \right|
  \le \exp \left( - (b_2(\gamma)/2) |t|^{2/\gamma} \right). }
The estimates \eqref{e:estimate1} and \eqref{e:estimate2} combine to give
\eqn{e:interval2}
{ \int_{n^{\eta} \le |t| \le {\eps_0 n^{\gamma/2}}}
    \left| \chi_{S^{(n)}}(t) - \chi_{S}(t) \right| \, dt
  \le \exp \left( - (b_2(\gamma)/4) n^{2 \eta / \gamma} \right). }

Finally, in the interval $\eps_0 n^{\gamma/2} \le |t| < \infty$ we have the 
following estimate. From \eqref{e:phi(t)} we have that for some 
$\eps_0 < t_0 < \infty$ and $C > 0$ and for all $|t| \ge t_0$ we have
\eqnst
{ | \chi_W(t) | 
  \le C \, |t|^{-2/\gamma} \le 1/2. }
This implies that 
\eqn{e:interval3}
{ \int_{|t| > t_0 n^{\gamma/2}} \left| \chi_{S^{(n)}}(t) - \chi_{S}(t) \right|
  \le e^{- c n}. }
On the interval $\eps_0 \le |t| \le n^{\gamma/2}$ we have that $\chi_W(t)$ 
is bounded away from $1$ since $U^{-\gamma/2}$ is not a lattice distribution.
This implies 
\eqn{e:interval4}
{ \int_{\eps_0 n^{\gamma/2} \le |t| \le t_0 n^{\gamma/2}}
     \left| \chi_{S^{(n)}}(t) - \chi_{S}(t) \right|
  \le e^{- c n}. }
Putting together the estimates \eqref{e:interval1}, \eqref{e:interval2},
\eqref{e:interval3}, \eqref{e:interval4} gives
\eqnst
{ \sup_{s \in \R} \left| f_{S^{(n)}}(s) - f_{S}(s) \right|
  \le \frac{1}{2 \pi} \int_{\R} \left| \chi_{S^{(n)}}(t) - \chi_{S}(t) \right| \, dt
  \le C \, n^{-\delta} }
with some $\delta = \delta(\gamma) > 0$.
This completes the proof of the Lemma.
\end{proof}

\section*{Acknowledgements.} We thank Keith Briggs (BT Research) for 
bringing the Poisson wireless model to our attention, as well as for 
useful comments on some of our early attempts at approximating
the tail contribution.

\end{document}